\definecolor{color0}{RGB}{238,20,135}
\definecolor{color1}{RGB}{0,221,164}
\definecolor{color2}{RGB}{86,151,209}
\definecolor{color3}{RGB}{249,185,131}
\definecolor{color4}{RGB}{150,114,196}
\tikzstyle{vertex}=[circle, fill=black, inner sep=1.5pt]
\tikzstyle{hole}=[circle, draw=black, fill=white, inner sep=1.5pt]
\tikzstyle{lat}=[draw=gray, thick]
\tikzstyle{a}=[-to, draw=black, shorten >=2pt, shorten <=2pt, thick]
\newtheorem{thm}{Theorem}[section]
\newtheorem{prop}[thm]{Proposition}
\newtheorem{cor}[thm]{Corollary}
\newtheorem{lemma}[thm]{Lemma}
\numberwithin{equation}{section}
\renewcommand{\to}{\longrightarrow}
\newcommand{\Z}{\mathbb{Z}}
\newcommand{\J}{\mathscr{J}}
\newcommand{\s}{\mathbf{s}}
\newcommand{\rr}{\mathbf{r}}
\newcommand{\Path}{\operatorname{Path}}
\let\oldmarginpar\marginpar
\renewcommand\marginpar[1]{\-\oldmarginpar[\rrggedleft\footnotesize #1]%
 {\rrggedright\footnotesize #1}}
\title{Properties of Congruence Lattices of Graph Inverse Semigroups}
\author{Marina Anagnostopoulou-Merkouri, Zachary Mesyan, \\ and James D. Mitchell}
\date{\today}
\begin{document}

\maketitle

\begin{abstract}
 From any directed graph $E$ one can construct the graph inverse semigroup $G(E)$, whose elements, roughly speaking, correspond to paths in $E$. Wang and Luo showed that the congruence lattice $L(G(E))$ of $G(E)$ is upper-semimodular for every graph $E$, but can fail to be lower-semimodular for some $E$. We provide a simple characterisation of the graphs $E$ for which $L(G(E))$ is lower-semimodular. We also describe those $E$ such that $L(G(E))$ is atomistic, and characterise the minimal generating sets for $L(G(E))$ when $E$ is finite and simple.
 
 \medskip

\noindent
\emph{Keywords:} inverse semigroup, directed graph, congruence lattice, semimodular, \linebreak atomistic

\noindent
\emph{2020 MSC numbers:} 20M18, 05C20, 06C10
\end{abstract}

%%%%%%%%%%%%%%%%%%%%%%%%%%%%%%%%%%%%%%%%%%%%%%%%%%%%%%%%%%%%%%%%%%%%%%%%%%%%%%%
\section{Introduction}
%%%%%%%%%%%%%%%%%%%%%%%%%%%%%%%%%%%%%%%%%%%%%%%%%%%%%%%%%%%%%%%%%%%%%%%%%%%%%%%

Roughly speaking, a \textit{graph inverse semigroup} $G(E)$ is an inverse semigroup with zero, whose non-zero elements are paths in a (directed) graph $E$, and where the operation is concatenation of those paths or zero, depending on whether one path ends at the vertex where the next path begins. A precise definition is given in the next section. Graph inverse semigroups were introduced by Ash and Hall~\cite{Ash1975aa}, who characterised those graph inverse semigroups that are congruence-free, and showed that every partial order is the partial order of the non-zero $\J$-classes of a graph inverse semigroup. These semigroups also generalise the so-called \textit{polycyclic monoids} of Nivat and Perrot~\cite{Nivat1970aa}, and arise in the study of Leavitt path algebras~\cite{Abrams2017aa} and graph $C^*$-algebras~\cite{Paterson2002aa}.

There has been a number of more recent papers specifically about graph inverse semigroups; see, for example,~\cite{Mesyan2016aa} and references therein. Of particular relevance here are the papers of Wang~\cite{Wang2019aa}, and Luo and Wang~\cite{Luo2021aa}. In~\cite{Wang2019aa}, Wang gives a description of the congruences of a graph inverse semigroup $G(E)$ in terms of certain sets of vertices of the graph $E$ and integer-valued functions on the cycles in $E$. This characterisation is used to show that the lattice $L(G(E))$ of any graph inverse semigroup $G(E)$ is Noetherian, i.e., $G(E)$ does not have any infinite strictly ascending chains of congruences. In~\cite{Luo2021aa}, Luo and Wang show that $L(G(E))$ is always upper-semimodular (\cref{prop-upper-semi}), but not lower-semimodular in general. These results follow a long tradition of studying lattices naturally associated to various algebraic objects. For one example, among many others, it is well-known that the lattice of normal subgroups of a group is modular but not distributive in general. Even if we restrict our attention to the lattices of congruences of semigroups, the literature is rich; see, for example,~\cite{Green1975aa}.

The upper-semimodularity result of Luo and Wang~\cite{Luo2021aa} naturally raises the following question: is it possible to characterise those graphs $E$ for which $L(G(E))$ is lower-semimodular? We answer this question in~\cref{thm-semimod} for arbitrary graphs $E$, and provide a somewhat simpler characterisation for finite simple graphs in~\cref{thm-semimod-mod-dist}. We also completely describe those graphs $E$ such that $L(G(E))$ is atomistic (\cref{thm-atomistic}), and characterise the minimal generating sets for $L(G(E))$  (\cref{thm-generators}), when $E$ is finite and simple. The results in this paper were initially suggested by experiments performed using the \textit{Semigroups} package~\cite{Mitchell2022aa} for GAP~\cite{GAP4}. For example, every connected simple graph $E$ with $4$ vertices, up to isomorphism, such that $L(G(E))$ is lower-semimodular, is shown in~\cref{fig-digraphs-4}.

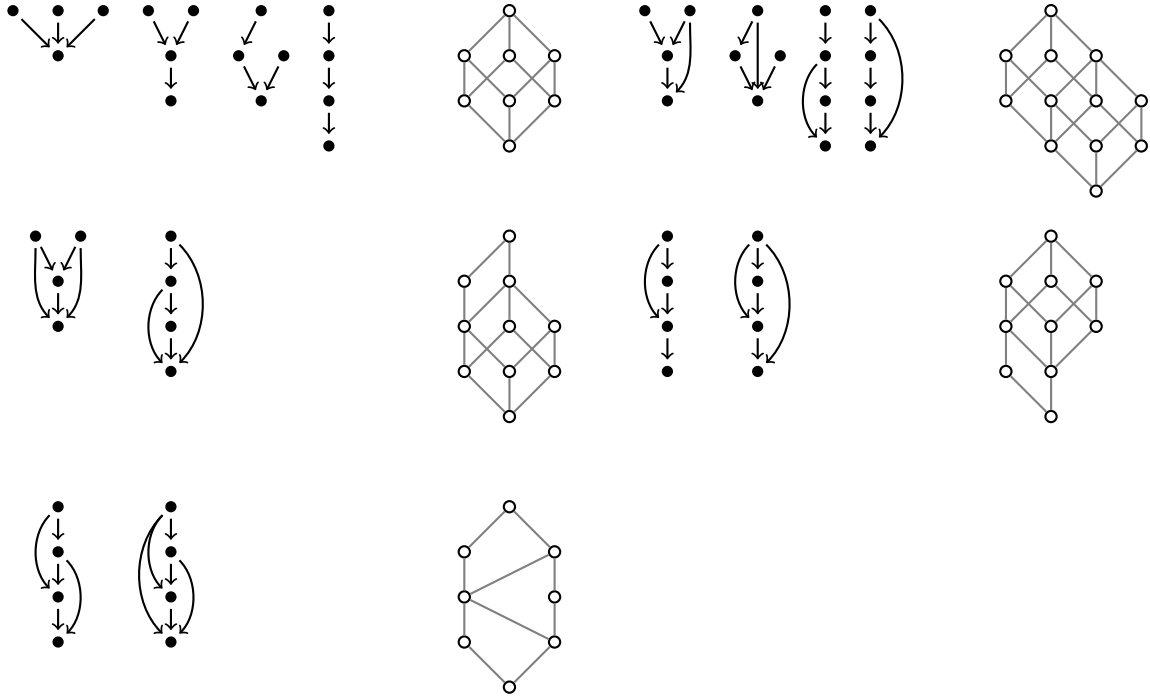
\begin{figure}
 \centering
 \begin{tikzpicture}[scale=0.6]
  \tikzstyle{every node}=[style=vertex]
  \tikzstyle{every path}=[style=a]
  
  % tree 1
  \node (1) at (0, 3) {};
  \node (2) at (1, 3) {};
  \node (3) at (2, 3) {};
  \node (4) at (1, 2) {};

  \draw (1) to (4);
  \draw (2) to (4);
  \draw (3) to (4);

  % tree 2
  \node (5) at (3, 3) {};
  \node (6) at (4, 3) {};
  \node (7) at (3.5, 2) {};
  \node (8) at (3.5, 1) {};

  \draw (5) to (7);
  \draw (6) to (7);
  \draw (7) to (8);

  % tree 3
  \node (9) at (5.5, 3) {};
  \node (10) at (5, 2) {};
  \node (11) at (6, 2) {};
  \node (12) at (5.5, 1) {};

  \draw (9) to (10);
  \draw (10) to (12);
  \draw (11) to (12);

  % tree 4
  \node (13) at (7, 3) {};
  \node (14) at (7, 2) {};
  \node (15) at (7, 1) {};
  \node (16) at (7, 0) {};

  \draw (13) to (14);
  \draw (14) to (15);
  \draw (15) to (16);

  % first lattice
  \tikzstyle{every node}=[style=hole]
  \tikzstyle{every path}=[style=lat]
  \node[hole] (17) at (11, 3) {};
  \node (18) at (10, 2) {};
  \node (19) at (11, 2) {};
  \node (20) at (12, 2) {};
  \node (21) at (10, 1) {};
  \node (22) at (11, 1) {};
  \node (23) at (12, 1) {};
  \node (24) at (11, 0) {};

  \draw (17) to (18);
  \draw (17) to (19);
  \draw (17) to (20);
  \draw (18) to (21);
  \draw (18) to (22);
  \draw (19) to (21);
  \draw (19) to (23);
  \draw (20) to (22);
  \draw (20) to (23);
  \draw (21) to (24);
  \draw (22) to (24);
  \draw (23) to (24);

  \tikzstyle{every node}=[style=vertex]
  \tikzstyle{every path}=[style=a]
  
  % graph (1, 5)
  \node (25) at (14, 3) {};
  \node (26) at (15, 3) {};
  \node (27) at (14.5, 2) {};
  \node (28) at (14.5, 1) {};

  \draw (25) to (27);
  \draw (26) to (27);
  \draw (27) to (28);
  \draw[out=-90, in=45] (26) to (28);

  % graph (1, 6)
  \node (29) at (16.5, 3) {};
  \node (30) at (16, 2) {};
  \node (31) at (17, 2) {};
  \node (32) at (16.5, 1) {};

  \draw (29) to (30);
  \draw (30) to (32);
  \draw (31) to (32);
  \draw (29) to (32);

  % graph (1, 7)
  \node (33) at (18, 3) {};
  \node (34) at (18, 2) {};
  \node (35) at (18, 1) {};
  \node (36) at (18, 0) {};

  \draw (33) to (34);
  \draw (34) to (35);
  \draw (35) to (36);
  \draw[out=-135, in=135] (34) to (36);

  % graph (1, 8)
  \node (37) at (19, 3) {};
  \node (38) at (19, 2) {};
  \node (39) at (19, 1) {};
  \node (40) at (19, 0) {};

  \draw (37) to (38);
  \draw (38) to (39);
  \draw (39) to (40);
  \draw[out=-45, in=45] (37) to (40);

  \tikzstyle{every node}=[style=hole]
  \tikzstyle{every path}=[style=lat]
  % lattice 2
  \node  (43) at (23, 3) {};
  \node  (44) at (22, 2) {};
  \node  (45) at (23, 2) {};
  \node  (46) at (24, 2) {};
  \node  (47) at (22, 1) {};
  \node  (48) at (23, 1) {};
  \node  (49) at (24, 1) {};
  \node  (50) at (25, 1) {};
  \node  (51) at (23, 0) {};
  \node  (52) at (24, 0) {};
  \node  (53) at (25, 0) {};
  \node  (54) at (24, -1) {};

  \draw (43) to (44);
  \draw (43) to (45);
  \draw (43) to (46);
  \draw (44) to (47);
  \draw (44) to (48);
  \draw (45) to (47);
  \draw (45) to (49);
  \draw (46) to (48);
  \draw (46) to (49);
  \draw (46) to (50);
  \draw (47) to (51);
  \draw (48) to (51);
  \draw (48) to (52);
  \draw (49) to (51);
  \draw (49) to (53);
  \draw (50) to (52);
  \draw (50) to (53);
  \draw (51) to (54);
  \draw (52) to (54);
  \draw (53) to (54);

  \tikzstyle{every node}=[style=vertex]
  \tikzstyle{every path}=[style=a]
  
  % graph (2, 1)
  \node (55) at (0.5, -2) {};
  \node (56) at (1.5, -2) {};
  \node (57) at (1, -3) {};
  \node (58) at (1, -4) {};

  \draw (55) to (57);
  \draw (56) to (57);
  \draw (57) to (58);
  \draw[out=-90, in=45] (56) to (58);
  \draw[out=-90, in=135] (55) to (58);

  % graph (2, 2)
  \node (59) at (3.5, -2) {};
  \node (60) at (3.5, -3) {};
  \node (61) at (3.5, -4) {};
  \node (62) at (3.5, -5) {};

  \draw (59) to (60);
  \draw (60) to (61);
  \draw (61) to (62);
  \draw[out=-45, in=45] (59) to (62);
  \draw[out=-135, in=135] (60) to (62);

  \tikzstyle{every node}=[style=hole]
  \tikzstyle{every path}=[style=lat]
  
  % lattice 3
  \node (63) at (11, -2) {};
  \node (64) at (10, -3) {};
  \node (65) at (11, -3) {};
  \node (66) at (10, -4) {};
  \node (67) at (11, -4) {};
  \node (68) at (12, -4) {};
  \node (69) at (10, -5) {};
  \node (70) at (11, -5) {};
  \node (71) at (12, -5) {};
  \node (72) at (11, -6) {};

  \draw (63) to (64);
  \draw (63) to (65);
  \draw (64) to (66);
  \draw (65) to (66);
  \draw (65) to (67);
  \draw (65) to (68);
  \draw (66) to (69);
  \draw (66) to (70);
  \draw (67) to (69);
  \draw (67) to (71);
  \draw (68) to (70);
  \draw (68) to (71);
  \draw (69) to (72);
  \draw (70) to (72);
  \draw (71) to (72);

  \tikzstyle{every node}=[style=vertex]
  \tikzstyle{every path}=[style=a]
  
  % graph (2, 3), x = 14.5, y = -2
  \node (73) at (14.5, -2) {};
  \node (74) at (14.5, -3) {};
  \node (75) at (14.5, -4) {};
  \node (76) at (14.5, -5) {};

  \draw (73) to (74);
  \draw (74) to (75);
  \draw (75) to (76);
  \draw[out=-135, in=135] (73) to (75);

  % graph (2, 3), x = 16.5, y = -2
  \node (77) at (16.5, -2) {};
  \node (78) at (16.5, -3) {};
  \node (79) at (16.5, -4) {};
  \node (80) at (16.5, -5) {};

  \draw (77) to (78);
  \draw (78) to (79);
  \draw (79) to (80);
  \draw[out=-135, in=135] (77) to (79);
  \draw[out=-45, in=45] (77) to (80);

  \tikzstyle{every node}=[style=hole]
  \tikzstyle{every path}=[style=lat]
  
  % lattice 4, x=23 (+5_, y=-2
  \node (81) at (23, -2) {};
  \node (82) at (22, -3) {};
  \node (83) at (23, -3) {};
  \node (84) at (24, -3) {};
  \node (85) at (22, -4) {};
  \node (86) at (23, -4) {};
  \node (87) at (24, -4) {};
  \node (88) at (22, -5) {};
  \node (89) at (23, -5) {};
  \node (90) at (23, -6) {};

  \draw (81) to (82);
  \draw (81) to (83);
  \draw (81) to (84);
  \draw (82) to (85);
  \draw (82) to (86);
  \draw (83) to (85);
  \draw (83) to (87);
  \draw (84) to (86);
  \draw (84) to (87);
  \draw (85) to (88);
  \draw (85) to (89);
  \draw (86) to (89);
  \draw (87) to (89);
  \draw (88) to (90);
  \draw (89) to (90);

  \tikzstyle{every node}=[style=vertex]
  \tikzstyle{every path}=[style=a]
  
  % graph (3, 1), x = 1, y = -8
  \node (90) at (1, -8) {};
  \node (91) at (1, -9) {};
  \node (92) at (1, -10) {};
  \node (93) at (1, -11) {};

  \draw (90) to (91);
  \draw (91) to (92);
  \draw (92) to (93);
  \draw[out=-135, in=135] (90) to (92);
  \draw[out=-45, in=45] (91) to (93);

  % graph (3, 2), x = 3.5, y = -8
  \node (94) at (3.5, -8) {};
  \node (95) at (3.5, -9) {};
  \node (96) at (3.5, -10) {};
  \node (97) at (3.5, -11) {};

  \draw (94) to (95);
  \draw (95) to (96);
  \draw (96) to (97);
  \draw[out=-135, in=135] (94) to (96);
  \draw[out=-135, in=135] (94) to (97);
  \draw[out=-45, in=45] (95) to (97);

  \tikzstyle{every node}=[style=hole]
  \tikzstyle{every path}=[style=lat]
  
  % lattice 5, x = 11, y = -8
  \node (98) at (11, -8) {};
  \node (99) at (10, -9) {};
  \node (100) at (12, -9) {};
  \node (101) at (10, -10) {};
  \node (102) at (12, -10) {};
  \node (103) at (10, -11) {};
  \node (104) at (12, -11) {};
  \node (105) at (11, -12) {};

  \draw (98) to (99);
  \draw (98) to (100);
  \draw (99) to (101);
  \draw (100) to (101);
  \draw (100) to (102);
  \draw (101) to (103);
  \draw (101) to (104);
  \draw (102) to (104);
  \draw (103) to (105);
  \draw (104) to (105);

 \end{tikzpicture}
\caption{Every connected simple graph $E$ with $4$ vertices, such that $L(G(E))$ is lower-semimodular, together with the corresponding lattice $L(G(E))$.} \label{fig-digraphs-4}
\end{figure}

%%%%%%%%%%%%%%%%%%%%%%%%%%%%%%%%%%%%%%%%%%%%%%%%%%%%%%%%%%%%%%%%%%%%%%%%%%%%%%%
\section{Definitions and statements of main results} \label{section-defs}
%%%%%%%%%%%%%%%%%%%%%%%%%%%%%%%%%%%%%%%%%%%%%%%%%%%%%%%%%%%%%%%%%%%%%%%%%%%%%%%

\subsection{Graphs}

A \textit{(directed) graph} $E = (E^0, E^1, \textbf{s}, \textbf{r})$ is a quadruple consisting of two sets, $E^0$ and $E^1$, and two functions $\textbf{s}, \textbf{r}: E^1 \to E^0$, called \textit{source} and \textit{range}, respectively. The elements of $E^0$ and $E^1$ are referred to as \textit{vertices} and \textit{edges}, respectively. A vertex $v \in E^0$ satisfying $\s^{-1}(v) = \varnothing$ is called a \textit{sink}. A sequence $p=e_1e_2\cdots e_n$ of (not necessarily distinct) edges $e_i \in E^1$, such that $\textbf{r}(e_i) = \textbf{s}(e_{i + 1})$ for $1\leq i\leq n - 1$, is a \textit{path} from $\s(e_1)$ to $\rr(e_n)$. Here we set $\s(p) = \s(e_1)$ and $\rr(p) = \rr(e_n)$, and refer to $n$ as the \textit{length} of $p$. We view the elements of $E^0$ as paths of length $0$, and denote by $\Path(E)$ the set of all paths in $E$. A path $p = e_1\cdots e_n$ where $n\geq 1$, $\s(p) = \rr(p)$, and $\s(e_i) \neq \s(e_j)$ for all $i\neq j$, is a \textit{cycle}. Two distinct edges $e, f\in E^1$, such that $\s(e) = \s(f)$ and $\rr(e) = \rr(f)$, are called \textit{parallel}. A graph containing no cycles is called \textit{acyclic}, while an acyclic graph without parallel edges is called \textit{simple}. A graph $E$ is \textit{finite} if both $E^0$ and $E^1$ are finite.

Given a graph $E$ and vertices $u,v\in E^0$, we write $u > v$ if $u\neq v$ and there is a path $p \in \Path(E)$ such that $\s(p)=u$ and $\rr(p)=v$. It is easy to see that $\geq$, defined in the obvious way from $>$, is a preorder on $E^0$. Next, let $H$ be a subset of $E^0$. Then $H$ is \emph{downward directed}  if it is non-empty, and for all $u,v \in H$ there exists $w \in H$ such that $u \geq w$ and $v\geq w$. We say that $H$ is \textit{hereditary} if $u\geq v$ implies that $v\in H$, for all $u \in H$ and $v\in E^0$. Supposing that $H$ is non-empty, $H$ is called a \emph{strongly connected component} if $u\geq v$ for all $u,v \in H$, and $H$ is maximal with respect to this property. If $H$ is non-empty and hereditary, then being a strongly connected component amounts to satisfying $u\geq v$ for all $u,v \in H$. Finally, we denote by $E\setminus H$ the subgraph of $E$ induced by $H$. Specifically, $(E\setminus H)^0 = E^0 \setminus H$, 
\[
(E\setminus H)^1 = E^1 \setminus \{e \in E^1 \mid \s(e) \in H \text{ or } \rr(e) \in H\},
\] 
and the source and range functions, $\s_{E\setminus H}$ and $\rr_{E\setminus H}$, are the restrictions of $\s$ and $\rr$, respectively, to $(E\setminus H)^1$.

\subsection{Inverse semigroups}

Let $S$ be a semigroup, i.e., a set with an associative binary operation. We say that $S$ is an \textit{inverse semigroup}, if for every $x\in S$ there exists a unique $x^{-1}\in S$ satisfying $xx^{-1}x = x$ and $x^{-1}xx^{-1} = x^{-1}$. 

Given a graph $E$, we define the \textit{graph inverse semigroup $G(E)$ of $E$} to be the inverse semigroup with zero, generated by $E^0$ and $E^1$, together with a set of elements $E^{-1} = \{e^{-1} \mid e\in E^1\}$, that satisfies the following four axioms, for all $u, v\in E_0$ and $e, f\in E^1$:
\begin{description}
 \item[(V)] $vu = \delta_{v, u}v$,
 \item[(E1)] $\textbf{s}(e)e = e\textbf{r}(e) = e$,
 \item[(E2)] $\textbf{r}(e)e^{-1} = e^{-1}\textbf{s}(e) = e^{-1}$,
 \item[(CK1)] $e^{-1}f = \delta_{e, f}\textbf{r}(e)$.
\end{description}
The symbol $\delta$ appearing in (V) and (CK1) is the Kronecker delta. For every $v\in E^0$ we define $v^{-1} = v$, and for every $q = e_1\cdots e_n\in \Path(E)$ we define $q^{-1} = e_n^{-1}\cdots e_1^{-1}$. It follows directly from the above axioms that every non-zero element in $G(E)$ can be written in the form $pq^{-1}$ for some $p, q\in \Path(E)$. It is routine to show that $G(E)$ is an inverse semigroup, with $(pq^{-1})^{-1} = qp^{-1}$ for every non-zero $pq^{-1}\in G(E)$. Moreover, $G(E)$ is finite if and only if $E$ is finite and acyclic.

If $S$ is any semigroup and $\rho\subseteq S\times S$ is an equivalence relation, then $\rho$ is called a \textit{congruence} if $(zx, zy), (xz, yz)\in \rho$, for all $(x, y)\in \rho$ and all $z\in S$. The \textit{diagonal congruence} is $\Delta_S = \{(x, x) \mid x\in S\}$, and the \textit{universal congruence} is $S\times S$. If $R$ is any subset of $S \times S$, then we denote by $R^{\sharp}$ the least congruence on $S$ containing $R$; this is called the congruence \textit{generated by $R$}.

\subsection{Lattices}

A partially ordered set $(L, \leq)$ is a \textit{lattice}, if for all $a, b\in L$ there exists an infimum $a \wedge b$, called the \textit{meet} of $a$ and $b$, and a supremum $a\vee b$, called the \textit{join} of $a$ and $b$. When the order is clear from the context we will write $L$ instead of $(L, \leq)$. For instance, if $X$ is any set, then the power set $\mathcal{P}(X)$ of $X$ forms a lattice under containment, where $\wedge$ is intersection and $\vee$ is union. Similarly, the collection of all congruences $L(S)$ on a semigroup $S$ forms a lattice, with respect to containment, where $\rho \vee \sigma = (\rho \cup \sigma)^{\sharp}$ and $\rho \wedge \sigma = \rho \cap \sigma$, for all $\rho, \sigma \in L(S)$. By convention, the diagonal congruence $\Delta_S$ on $S$ is the join of the empty set of congruences. A lattice $L$ is \textit{complete} if every subset $K\subseteq L$ has an infimum $\bigwedge K$ and a supremum $\bigvee K$. Examples of complete lattices include all finite lattices, the power set lattice $\mathcal{P}(X)$ of any set $X$, and the lattice of congruences $L(S)$ of any semigroup $S$. 

Two lattices $L_1$ and $L_2$ are \textit{order-isomorphic} if there exists a bijection $\Psi : L_1 \to L_2$, such that $\Psi(a\vee b) = \Psi(a) \vee \Psi(b)$ and $\Psi(a\wedge b) = \Psi(a) \wedge \Psi(b)$, for all $a, b\in L_1$. A subset $L'$ of a lattice $L$ is called a \textit{sublattice} of $L$ if it forms a lattice under the same join and meet operations as $L$. We say that $L$ is \textit{generated} by a subset $X$ if every element of $L$ can be expressed as a join of finitely many elements of $X$. In this situation, the elements of $X$ are called \textit{generators} of $L$. For $a, b\in L$, we say that $b$ \textit{covers} $a$, and write $a \prec b$, if $a < b$ and there is no element $c \in L$, such that $a < c < b$. If $L$ is a lattice with a least element $0$, then $a\in L$ is an \textit{atom} in case $0\prec a$. A lattice is \textit{atomistic} if it can be generated using only atoms. 

Let $L$ be a lattice. Then $L$ is \textit{modular} if $a\leq c$ implies that $(a \vee b) \wedge c = a\vee (b \wedge c)$, for all $a, b, c\in L$. Moreover, $L$ is \textit{upper-semimodular} if $a\wedge b \prec a, b$ implies that $a, b\prec a\vee b$, for all $a, b\in L$. Likewise, $L$ is \textit{lower-semimodular} if $a, b \prec a\vee b $ implies that $a\wedge b \prec a, b$, for all $a, b\in L$. Finally, $L$ is \textit{distributive} if $a\wedge(b\vee c) = (a\wedge b)\vee (a\wedge c)$ and $a\vee(b\wedge c) = (a\vee b) \wedge (a\vee c)$, for all $a, b, c\in L$.

It is well-known that every distributive lattice is modular. Moreover, every modular lattice is both upper- and lower-semimodular, and the converse also holds for finite lattices. A lattice $L$ is distributive if and only if neither the pentagon $\mathfrak{N}_5$ nor the diamond $\mathfrak{M}_3$, shown in \cref{fig-pentagon}, is a sublattice of $L$. Similarly, a lattice $L$ is modular if and only if the pentagon $\mathfrak{N}_5$ is not a sublattice of $L$.  See, for example, \cite{Grtzer1978aa} for further details.

\begin{figure}
 \centering
 \begin{tikzpicture}
  \tikzstyle{every node}=[style=hole]
  \tikzstyle{every path}=[style=lat]
  \node (0) at (1, 0) {};
  \node (a) at (-0.5, 2) {};
  \node (b) at (1, 2) {};
  \node (c) at (2.5, 2) {};
  \node (1) at (1, 4) {};

  \draw (1) to (a);
  \draw (1) to (b);
  \draw (1) to (c);
  \draw (a) to (0);
  \draw (b) to (0);
  \draw (c) to (0);
 \end{tikzpicture}
 \qquad\qquad
 \begin{tikzpicture}
  \tikzstyle{every node}=[style=hole]
  \tikzstyle{every path}=[style=lat]
  \node (1) at (1, 0) {};
  \node (2) at (2, 2) {};
  \node (3) at (1, 4)   {};
  \node (4) at (0, 3)   {};
  \node (5) at (0, 1)   {};

  \draw[lat] (1) to (2);
  \draw[lat] (2) to (3);
  \draw (3) to (4);
  \draw (4) to (5);
  \draw (5) to (1);
 \end{tikzpicture}
 \caption{The diamond lattice $\mathfrak{M}_3$ and the pentagon lattice $\mathfrak{N}_5$.} \label{fig-pentagon}
\end{figure}
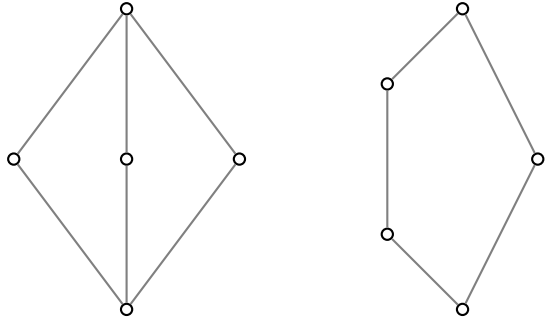

\subsection{Congruence lattices on graph inverse semigroups}

Let $E$ be a graph. Given a subset $H$ of $E^0$, we denote by $C(H)$ the set of cycles $c=e_1\dots e_n \in \Path(E)$ such that $\s(e_i) \in H$ for each $i$. As defined in~\cite{Luo2021aa}, a \emph{Wang triple} $(H,W,f)$ on $E$ consists of a hereditary set $H \subseteq E^0$, a set 
\[
W \subseteq \{v \in E^0 \setminus H \mid |\s_{E\setminus H}^{-1}(v)| = 1\},
\] 
and a \emph{cycle} function $f: C(E^0) \to \Z^+\cup\{\infty\}$ (i.e., $f(c) = 1$ for all $c \in C(H)$, $f(c) = \infty$ for all $c \notin C(H\cup W)$, and the restriction of $f$ to $C(W)$ is invariant under cyclic permutations). In~\cite{Luo2021aa}, the term ``congruence triple" is used for this concept.

Given a Wang triple $(H, W, f)$ on a graph $E$, we define $\varrho(H, W, f)$ to be the corresponding congruence, generated by the following set:
\begin{eqnarray*}
 (H\times \{0\}) \cup \{(w, ee^{-1}) \mid w \in W,\ \mathbf{s}(e) = w,\ \mathbf{r}(e)\not\in H\} \\ \cup \{(c^{f(c)}, \s(c)) \mid c\in C(W), \ f(c) \in \Z^+\}.
\end{eqnarray*}
Also, given two Wang triples $(H_1,W_1,f_1)$ and $(H_2,W_2,f_2)$ on $E$, write $(H_1,W_1,f_1) \leq (H_2,W_2,f_2)$ if $H_1 \subseteq H_2$, $W_1\setminus H_2 \subseteq W_2$, and $f_2(c) \mid f_1(c)$ for all $c \in C(E^0)$. (Here $\mid$ denotes ``divides", and it is understood that $\infty \mid \infty$, and $n \mid \infty$ for all $n \in \Z^+$.) According to~\cite[Corollary 1.1]{Luo2021aa} and~\cite[Lemma 2.18]{Wang2019aa}, $\leq$ is a partial order on the set of all Wang triples on a graph. Moreover, Luo and Wang characterise the lattice of congruences on a graph inverse semigroup, according to the associated Wang triples. We will repeatedly require this characterisation, and so we state it in the next proposition.

\begin{prop}[Proposition 1.2 in \cite{Luo2021aa}] \label{Luo-Wang-Thrm}
 Let $E$ be a graph, let $G(E)$ be the graph inverse semigroup of $E$, and let $L(G(E))$ be the lattice of congruences on $G(E)$. The function $(H, W, f) \mapsto \varrho(H, W, f)$ is an order-isomorphism between the set of all Wang triples on $E$, ordered by $\leq$, and $L(G(E))$, ordered by containment.
\end{prop}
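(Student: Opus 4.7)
The plan is to split the argument into three parts: (i) constructing an inverse map $\rho \mapsto (H_\rho, W_\rho, f_\rho)$ and verifying it lands in the set of Wang triples, (ii) showing that the two maps are mutually inverse, and (iii) verifying that both maps are order-preserving in both directions.

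For (i), I would set $H_\rho = \{v \in E^0 : (v,0) \in \rho\}$; define $W_\rho$ to be the set of $v \in E^0 \setminus H_\rho$ such that $|\s_{E \setminus H_\rho}^{-1}(v)| = 1$ and $(v, ee^{-1}) \in \rho$ for the unique such edge $e$; and let $f_\rho(c)$ be the least positive integer $n$ with $(c^n, \s(c)) \in \rho$, or $\infty$ if no such $n$ exists. The verification that $(H_\rho, W_\rho, f_\rho)$ is a Wang triple is a sequence of routine manipulations using the defining relations. For example, $H_\rho$ is hereditary because for any path $p$ from $u$ to $v \in H_\rho$ one has $p = pv \mathrel{\rho} p \cdot 0 = 0$, and iterated application of (CK1) yields $v = p^{-1}p \mathrel{\rho} 0$, so $u \in H_\rho$ whenever $u \geq v \in H_\rho$ (after checking the direction carefully). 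The single-edge condition on $W_\rho$ uses (CK1) similarly: if $(v, ee^{-1}) \in \rho$ and $e' \neq e$ with $\s(e') = v$, then $e' = ve' \mathrel{\rho} ee^{-1}e' = 0$, so $\rr(e') = (e')^{-1}e' \mathrel{\rho} 0$ eliminates $e'$ from $E \setminus H_\rho$. Cyclic invariance of $f_\rho$ on $C(W_\rho)$ follows from conjugating $c^k \mathrel{\rho} \s(c)$ by individual edges of $c$, together with (CK1).

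The main obstacle lies in (ii), specifically in showing that $(H_{\varrho(H,W,f)}, W_{\varrho(H,W,f)}, f_{\varrho(H,W,f)}) = (H,W,f)$. This amounts to verifying that the congruence generated by the three listed families does not introduce unwanted identifications: $(u,0) \notin \varrho(H,W,f)$ for $u \notin H$, and analogous negative statements for $W$ and $f$. The natural strategy is to construct a concrete normal form for elements modulo $\varrho(H,W,f)$ — a rewriting procedure that collapses any path meeting $H$ to zero, replaces $ee^{-1}$ by $\s(e)$ whenever $\s(e) \in W$ and $\rr(e) \notin H$, and reduces powers of each cycle $c \in C(W)$ modulo $f(c)$. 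One then shows that the induced equivalence relation is a semigroup congruence containing every generator of $\varrho(H,W,f)$, whence the two coincide, and the negative statements hold by construction. Alternatively, one may invoke the description of congruences on $G(E)$ given in \cite{Wang2019aa}, from which this lattice characterisation is established in \cite{Luo2021aa}.

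For (iii), order-preservation is a direct verification. If $(H_1,W_1,f_1) \leq (H_2,W_2,f_2)$, then each of the three generating families for $\varrho(H_1,W_1,f_1)$ lies in $\varrho(H_2,W_2,f_2)$: the $H$-generators from $H_1 \subseteq H_2$; the $W$-generators by splitting into the cases $v \in W_1 \cap H_2$ (where both $v$ and $ee^{-1}$ are $\varrho_2$-equivalent to $0$, since $v \in H_2$ forces $e = ve \mathrel{\varrho_2} 0$) and $v \in W_1 \setminus H_2 \subseteq W_2$; and the cycle-generators from $f_2(c) \mid f_1(c)$, which gives $(c^{f_1(c)}, \s(c)) \in \varrho_2$ by raising $(c^{f_2(c)}, \s(c)) \in \varrho_2$ to the appropriate power. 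The converse implication $\varrho(H_1,W_1,f_1) \subseteq \varrho(H_2,W_2,f_2) \Rightarrow (H_1,W_1,f_1) \leq (H_2,W_2,f_2)$ is immediate from the explicit description of $(H_\rho, W_\rho, f_\rho)$ obtained in step (i).
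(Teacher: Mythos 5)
Note first that the paper does not prove this proposition at all: it is imported verbatim as Proposition~1.2 of Luo and Wang, which in turn rests on Wang's classification of the congruences of $G(E)$ in~\cite{Wang2019aa}. So your final fallback (``invoke the description of congruences on $G(E)$ given in \cite{Wang2019aa}'') is exactly what the paper does, and is the only part of your proposal that actually closes the argument. Judged as a self-contained proof, your sketch has a genuine gap precisely at the point you identify as the main obstacle and then do not execute: the claim that the congruence generated by the three listed families makes no unwanted identifications. The proposed ``rewriting procedure'' is only announced; you do not define the normal forms, prove confluence or well-definedness (nontrivial in the presence of cycles, powers reduced modulo $f(c)$, and infinite graphs), nor verify that the induced relation is compatible with multiplication. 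This is where all the content of Wang's theorem lives, so leaving it as a plan leaves the proposition unproved. Moreover, you only discuss one composite: to get an order-\emph{isomorphism} onto $L(G(E))$ you must also show $\varrho(H_\rho,W_\rho,f_\rho)=\rho$ for an \emph{arbitrary} congruence $\rho$, i.e.\ that every pair in $\rho$ is a consequence of the listed generators (surjectivity). That is the deeper half of the classification, it is nowhere addressed, and your step (iii) converse silently relies on it (for instance, to know that $\{n:(c^n,\s(c))\in\rho\}$ is exactly the set of multiples of $f_\rho(c)$, so that $f_2(c)\mid f_1(c)$ follows from $\varrho_1\subseteq\varrho_2$).

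A smaller but real slip: your heredity check is stated in the wrong direction. Hereditary means downward closed ($u\in H$ and $u\ge v$ imply $v\in H$), so the computation should start from $u\,\rho\,0$: for a path $p$ from $u$ to $v$ one has $p=up\,\rho\,0$ and then $v=p^{-1}p\,\rho\,0$, giving $v\in H_\rho$. As written (``$u\in H_\rho$ whenever $u\ge v\in H_\rho$'') you would be proving $H_\rho$ is \emph{upward} closed, which is false in general; from $p\,\rho\,0$ one only gets the idempotent $pp^{-1}\,\rho\,0$, not $u\,\rho\,0$. The routine verifications in (i) and the forward direction of (iii) are otherwise fine, but without a genuine proof of the two composites in (ii) the proposal does not establish the proposition; either carry out the normal-form analysis in full or, as the paper does, cite Wang and Luo--Wang.
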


In light of this proposition, we will abuse notation by identifying Wang triples with the corresponding congruences, writing $(H, W, f)$ instead of $\varrho(H, W, f)$, and $(H_1,W_1,f_1) \subseteq (H_2,W_2,f_2)$ instead of $(H_1,W_1,f_1) \leq (H_2,W_2,f_2)$. If $E$ is acyclic, then the component $f$ in a Wang triple $(H, W, f)$ is redundant, and we will write $(H, W, \varnothing)$ instead.

Next, we record another frequently used result, mentioned in the introduction.

\begin{prop}[Theorem 1.3 in \cite{Luo2021aa}]\label{prop-upper-semi}
Let $E$ be a graph, let $G(E)$ be the graph inverse semigroup of $E$, and let $L(G(E))$ be the lattice of congruences on $G(E)$. Then $L(G(E))$ is upper-semimodular.
\end{prop}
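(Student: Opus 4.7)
The plan is to work with Wang triples via \cref{Luo-Wang-Thrm}, so that upper-semimodularity becomes a combinatorial statement about triples $(H, W, f)$ under the order $\leq$. I would begin by classifying the covers $T \prec T'$ into three elementary types: (A) $H$ gains a single vertex $v$ with no outgoing edges to $E^0 \setminus (H \cup \{v\})$, together with any forced adjustments to $W$ and $f$; (B) $W$ gains a single vertex satisfying the single-outgoing-edge condition in $E \setminus H$, with $H$ unchanged; (C) $f(c)$ is replaced by $f(c)/p$ for a prime $p$ on a single cyclic-permutation class of cycles in $C(W)$, with $H$ and $W$ unchanged. A short argument shows every cover has one of these forms, by factoring any refinement into such elementary steps.

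Now suppose $T_1$ and $T_2$ both cover $T = T_1 \wedge T_2$, and argue by case analysis on the types of the two moves. When the moves act on disjoint components of the triple, for instance an (A) or (B) move paired with a (C) move, or two (C) moves on different cyclic-permutation classes, the operations commute and $T_1 \vee T_2$ is obtained from each $T_i$ by the single unapplied move; hence $T_i \prec T_1 \vee T_2$ for both $i$.

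The main obstacle is the remaining cases: two (A) moves, two (B) moves, a mixed (A)--(B) pair, or two (C) moves on the same cyclic-permutation class. Here one must verify that the combined enlargement does not cascade through hereditary closure or the single-outgoing-edge condition, pushing $T_1 \vee T_2$ strictly past an elementary step above some $T_i$. For (A)--(A), the minimality of the individual steps at vertices $v_1, v_2$ ensures that $v_1$ and $v_2$ retain the required property in $E \setminus (H \cup \{v_1, v_2\})$, so the hereditary closure of $H \cup \{v_1, v_2\}$ adjoins nothing further; the (B)--(B) and (A)--(B) cases follow from monotonicity of the single-outgoing-edge condition as $H$ grows; and the (C)--(C) case reduces to the arithmetic fact that $\mathrm{lcm}(n/p, n/q) = n/(pq)$ covers both $n/p$ and $n/q$ for distinct primes $p, q$ dividing $n$. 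With these verifications in place, $T_1 \vee T_2$ is a single elementary step above each $T_i$, and upper-semimodularity follows.
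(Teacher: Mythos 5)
First, note that the paper does not prove this proposition at all: it is quoted from Luo and Wang (\cite{Luo2021aa}, Theorem 1.3), so there is no in-paper proof to compare against. Judged on its own terms, and against the cover characterisation the paper does develop (\cref{min-triple}), your argument has genuine gaps. The classification of covers into your elementary moves (A)--(C) is incorrect: a cover in which $H$ grows need not add a single vertex (even ``up to forced adjustments''). For example, for a two-vertex cycle $u \to v \to u$ one has $(\varnothing, \{u,v\}, f) \prec (\{u,v\}, \varnothing, 1)$, and for a graph $u \to w$ with $w$ a sink one has $(\varnothing, \{u\}, \varnothing) \prec (\{u,w\}, \varnothing, \varnothing)$; in both cases $H$ absorbs several vertices at once and no intermediate single-vertex enlargement of $H$ yields a Wang triple lying between the two. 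The correct description of such covers is case (iii) of \cref{min-triple}, which involves a nontrivial condition on \emph{every} intermediate hereditary set and allows $H_2 \setminus H_1$ to be an arbitrary downward directed set; your claim that ``a short argument shows every cover has one of these forms'' is therefore false, and the whole case analysis is built on the wrong list of cases.

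Second, the cases you yourself flag as the main obstacle are exactly where the content of the theorem lies, and the verifications you sketch do not work. The danger is the cascade encoded by the set $J$ in \cref{join-meet}: when $H_1$ and $H_2$ are united, a vertex of $W_1 \cup W_2$ whose unique outgoing edge in $E \setminus H_i$ lands in the other hereditary set loses its last edge and is forced into $H$, possibly triggering further absorptions. Your appeal to ``monotonicity of the single-outgoing-edge condition as $H$ grows'' is precisely backwards: as $H$ grows the number of outgoing edges in $E \setminus H$ can only decrease, and it is the drop from $1$ to $0$ that creates $J$ (and is also the source of the failure of lower-semimodularity treated in this paper). Likewise, in the (A)--(A) case ``hereditary closure'' is not the issue (a union of hereditary sets is hereditary); the issue is again the $J$-cascade, which you do not address. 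Finally, a small but telling slip: in the (C)--(C) case the join takes $\gcd$ of the cycle functions, so the relevant fact is $\gcd(n/p, n/q) = n/(pq)$, not $\mathrm{lcm}$. To repair the argument you would need to replace the elementary-move classification by the genuine cover criterion (\cref{min-triple}) and then show, much as the paper does in the opposite direction for \cref{thm-semimod} via \cref{J-empty,containment,no-containment}, that when both $T_1$ and $T_2$ cover $T_1 \wedge T_2$ the cascade in the join is constrained enough that $T_1 \vee T_2$ covers each $T_i$; this analysis is the substance of Luo and Wang's proof and is missing here.
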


In~\cite[Example 2]{Luo2021aa} Luo and Wang produce a graph $E$ such that $L(G(E))$ is not lower-semimodular. For another example, if $E$ is the graph in \cref{fig-not-yet-drawn}, then it can be shown using the \textit{Semigroups} package~\cite{Mitchell2022aa} for GAP~\cite{GAP4}, that $L(G(E))$ is isomorphic to the lattice in \cref{fig-not-yet-drawn}, which is easily seen to not be lower-semimodular.

\begin{figure}
 \centering
 \begin{tikzpicture}
  \node [vertex] (1) at (1, 2.4) {};
  %\node [left=2pt of 1] {$1$};
  \node [vertex] (2) at (1, 1.1) {};
  %\node [left=2pt of 2] {$2$};
  \node [vertex] (3) at (0, 0)   {};
  %\node [left=2pt of 3] {$3$};
  \node [vertex] (4) at (2, 0)   {};
  %\node [right=2pt of 4] {$4$};

  \draw[style=a] (1) to (2);
  \draw[style=a] (2) to (3);
  \draw[style=a] (2) to (4);
 \end{tikzpicture}
 \qquad
 \begin{tikzpicture}
  \tikzstyle{every node}=[style=hole]
  \tikzstyle{every path}=[style=lat]
  \node [fill=color2] (8) at (2, 4) {};

  \node [fill=color3] (10) at (0.5, 3) {};
  \node (13) at (1.5, 3) {};
  \node (5) at (2.5, 3) {};
  \node [fill=color3] (11) at (3.5, 3) {};

  \node (12) at (0, 2) {};
  \node (6) at (1, 2) {};
  \node (3) at (2, 2) {};
  \node (14) at (3, 2) {};
  \node (7) at (4, 2) {};

  \node (2) at (1, 1) {};
  \node [fill=color4] (9) at (2, 1) {};
  \node (4) at (3, 1) {};

  \node (1) at (2, 0) {};

  \draw (8) to (10);
  \draw (8) to (13);
  \draw (8) to (5);
  \draw (8) to (11);

  \draw (10) to (12);
  \draw (10) to (6);
  \draw (13) to (12);
  \draw (13) to (3);
  \draw (13) to (14);
  \draw (5) to (6);
  \draw (5) to (3);
  \draw (5) to (7);
  \draw (11) to (14);
  \draw (11) to (7);

  \draw (12) to (2);
  \draw (12) to (9);
  \draw (6) to (2);
  \draw (3) to (2);
  \draw (3) to (4);
  \draw (14) to (9);
  \draw (14) to (4);
  \draw (7) to (4);

  \draw (2) to (1);
  \draw (9) to (1);
  \draw (4) to (1);

 \end{tikzpicture}
 \caption{A graph $E$, together with $L(G(E))$, which is not lower-semimodular. The vertices of $L(G(E))$ shown in orange are covered by their join, shown in blue, but they do not cover their meet, shown in purple.} \label{fig-not-yet-drawn}
\end{figure}
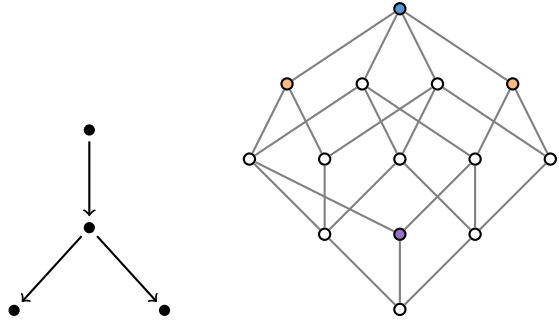

\subsection{Main results}

To state our first main theorem, which characterises those graphs $E$ such that the congruence lattice $L(G(E))$ of the corresponding graph inverse semigroup $G(E)$ is lower-semimodular, we require the following definition. Let $E$ be a graph and $v \in E^0$. We refer to $v$ as a \textit{forked} vertex, if there exist distinct edges $e,f \in \s^{-1}(v)$ such that the following properties hold:
\begin{enumerate}[\rm (i)]
 \item $\rr(g) \not\geq \rr(e)$ for all $g \in \s^{-1}(v) \setminus \{e\}$;
 \item $\rr(g) \not\geq \rr(f)$ for all $g \in \s^{-1}(v) \setminus \{f\}$.
\end{enumerate}

\begin{thm}\label{thm-semimod}
Let $E$ be a graph, let $G(E)$ be the graph inverse semigroup of $E$, and let $L(G(E))$ be the lattice of congruences on $G(E)$. Then $E$ has no forked vertices if and only if $L(G(E))$ is lower-semimodular.
\end{thm}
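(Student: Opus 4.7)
My plan is to prove both implications using the Wang-triple parameterisation of $L(G(E))$ from \cref{Luo-Wang-Thrm}. For the ``only if'' direction I proceed by contrapositive. Given a forked vertex $v$ with witness edges $e, f \in \s^{-1}(v)$, let $H_1$ and $H_2$ be the hereditary closures of $\{\rr(g) : g \in \s^{-1}(v) \setminus \{f\}\}$ and $\{\rr(g) : g \in \s^{-1}(v) \setminus \{e\}\}$, respectively. Condition (ii) in the definition of a forked vertex gives $\rr(f) \notin H_1$ and (i) gives $\rr(e) \notin H_2$, so $v \in E^0 \setminus H_i$ and $|\s_{E \setminus H_i}^{-1}(v)| = 1$ for each $i$. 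Hence there are well-defined Wang triples $\sigma_i = (H_i, W_i, f_i)$ with $v \in W_i$, where I take $W_i$ as large as possible and $f_i$ maximally coarse. I would then verify that $\sigma_1, \sigma_2 \prec \sigma_1 \vee \sigma_2$: the join forces $\rr(e)$ and $\rr(f)$, and hence (via the hereditary closure) the vertex $v$ itself, into the hereditary component, while the maximality of $W_i$ rules out strict intermediates. For the meet, any candidate $(H_m,W_m,f_m)$ satisfies $H_m \subseteq H_1 \cap H_2$, which contains neither $\rr(e)$ nor $\rr(f)$, so $v$ has at least two out-edges in $E \setminus H_m$ and hence $v \notin W_m$; it follows that $(H_1, W_1 \setminus \{v\}, f_1)$ sits strictly between $\sigma_1 \wedge \sigma_2$ and $\sigma_1$, so $\sigma_1 \wedge \sigma_2 \not\prec \sigma_1$, contradicting lower-semimodularity.

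For the ``if'' direction I again argue by contrapositive: suppose $\sigma_1, \sigma_2 \prec \tau := \sigma_1 \vee \sigma_2$ but $\mu := \sigma_1 \wedge \sigma_2 \not\prec \sigma_1$, so there is a Wang triple $\pi$ strictly between $\mu$ and $\sigma_1$. The first step is to classify cover relations $\rho \prec \rho'$ in $L(G(E))$, showing each corresponds to one of three atomic changes: enlarging $H$ by a minimal new vertex (together with forced deletions from $W$ and adjustments to $f$), admitting one new valid vertex into $W$, or replacing $f(c)$ by $f(c)/p$ for some prime $p$ on a single cycle $c$. Using this classification I would analyse the possible shapes of the atomic passages $\sigma_i \prec \tau$, and show that the only way the downward passage from $\sigma_1$ to $\mu$ fails to be atomic is when some vertex $v$ has two distinct out-edges $e, f$ whose ranges are absorbed differently into $\sigma_1$ and $\sigma_2$ and cannot be reached via any other out-edge of $v$---exactly the conditions (i) and (ii) defining a forked vertex at $v$.

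The main obstacle I expect is the reverse direction, namely extracting the forked-vertex witnesses from an abstract failure of lower-semimodularity. The forward direction is essentially an explicit construction, whereas the reverse requires ruling out every non-forking source of cover mismatch, including subtle interactions with the cycle function $f$ that do not arise in the purely hereditary setting. Managing the case analysis cleanly in the general (possibly infinite) graph setting, where the interval $[\mu, \tau]$ in $L(G(E))$ need not be finite even though $L(G(E))$ is Noetherian, will be the step that requires the most care.
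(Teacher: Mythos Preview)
Your forward construction has a genuine gap: with $H_1$ taken as the hereditary closure of $\{\rr(g) : g \in \s^{-1}(v)\setminus\{f\}\}$, the relation $\sigma_1 \prec \sigma_1 \vee \sigma_2$ need not hold. For a concrete failure, let $\s^{-1}(v) = \{e,f\}$ with $u = \rr(e)$ a sink and $w = \rr(f)$ having two out-edges to distinct sinks $a,b$ (all five vertices distinct). Then $v$ is forked, your $H_1 = \{u\}$, $W_1 = \{v\}$, and $\sigma_1 \vee \sigma_2 = (E^0,\varnothing,1)$; but $(\{u,a\},\{v\},1)$ is a valid Wang triple lying strictly between $\sigma_1$ and the join, so no cover. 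The paper's \cref{bifurcation} avoids this by choosing instead $H_w = \{x \in X : x \not\geq \rr(f)\}$ with $X = \{x : v \geq x\}$, the \emph{largest} hereditary subset of $X$ missing $\rr(f)$. Any hereditary $H'$ strictly between $H_w$ and $X$ then automatically contains some $x \geq \rr(f)$, hence $\rr(f)$ itself, so every edge out of $v$ lands in $H'$ and \cref{min-triple}(iii) gives the cover immediately. Your ``maximality of $W_i$'' does not compensate, because the obstruction is an intermediate hereditary set, not a $W$-enlargement.

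For the reverse implication the paper takes the opposite logical route from yours. Rather than extracting a forked vertex from a failure of lower-semimodularity, it assumes $E$ has no forked vertices and proves directly that $(H_1,W_1,f_1) \prec (H_1,W_1,f_1)\vee(H_2,W_2,f_2)$ forces $(H_1,W_1,f_1)\wedge(H_2,W_2,f_2) \prec (H_2,W_2,f_2)$. The no-fork hypothesis enters once, via \cref{no-fork}: it gives $V_0 \cap W_1 \cap W_2 = \varnothing$, so the meet formula from \cref{join-meet} loses its $V_0$-correction and becomes $(H_1\cap H_2,\,(W_1\cap H_2)\cup(W_2\cap H_1)\cup(W_1\cap W_2),\,\mathrm{lcm}(f_1,f_2))$. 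With this simplification the argument splits into the cases $J=\varnothing$ (\cref{J-empty}), $J\neq\varnothing$ with $H_2\subseteq H_1$ (\cref{containment}), and $J\neq\varnothing$ with $H_2\not\subseteq H_1$ (\cref{no-containment}), each time verifying \cref{min-triple}(iii) for the meet against $(H_2,W_2,f_2)$. Your contrapositive plan is not wrong in principle, but your sketch does not yet explain how a cover mismatch that ostensibly arises from the cycle function, or from the $J$-mechanism in the join, is to be traced back to a single vertex satisfying both clauses of the forked definition; the paper's direct approach never needs to locate such a vertex, which is what makes it tractable.
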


\cref{thm-semimod} has a somewhat simpler form when restricted to finite simple graphs, in which case additional characterisations can be given.

\begin{cor}\label{thm-semimod-mod-dist}
Let $E$ be a finite acyclic graph, let $G(E)$ be the graph inverse semigroup of $E$, and let $L(G(E))$ be the lattice of congruences of $G(E)$. Then the following are equivalent:
 \begin{enumerate}[\rm (i)]
  \item $L(G(E))$ is lower-semimodular;
  \item $L(G(E))$ is modular;
  \item $L(G(E))$ is distributive.
 \end{enumerate}
If $E$ is simple, then these conditions are also equivalent to the following:
 \begin{enumerate}[\rm (iv)]
  \item $\rr(e) \geq \rr(f)$ or $\rr(f) \geq \rr(e)$ for all $e, f\in E^1$ such that $\s(e) = \s(f)$.
 \end{enumerate}
\end{cor}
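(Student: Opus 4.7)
My plan is to establish the equivalence of (i), (ii), and (iii) via a cycle, using that $G(E)$ is finite (since $E$ is finite and acyclic) and hence $L(G(E))$ is a finite lattice. The implications (iii)$\Rightarrow$(ii)$\Rightarrow$(i) are standard: distributivity implies modularity, and modularity is stronger than lower-semimodularity. For (i)$\Rightarrow$(ii), I would appeal to \cref{prop-upper-semi} for upper-semimodularity and then invoke the standard fact recalled at the end of \cref{section-defs} that a finite lattice which is simultaneously upper- and lower-semimodular is modular.

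The main obstacle will be the implication (ii)$\Rightarrow$(iii). In any modular lattice, failure of distributivity is witnessed by a diamond $\mathfrak{M}_3$ sublattice, so it suffices to rule out such a sublattice in $L(G(E))$ under the hypothesis. By the already-established (ii)$\Rightarrow$(i) together with \cref{thm-semimod}, $E$ has no forked vertices. I plan to argue by contradiction: assume a sublattice $\{0, a_1, a_2, a_3, 1\}$ isomorphic to $\mathfrak{M}_3$, write $a_i = (H_i, W_i, \varnothing)$ via \cref{Luo-Wang-Thrm}, and use the identities $a_i \wedge a_j = 0$ and $a_i \vee a_j = 1$ for $i \ne j$ to constrain the $H_i$ and $W_i$ sharply. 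A careful analysis of how Wang-triple meets and joins are computed should produce a vertex supporting two distinct outgoing edges witnessing the forked conditions (i) and (ii) from the definition preceding \cref{thm-semimod}, contradicting the hypothesis. I expect this combinatorial analysis of Wang-triple meets and joins to be the most technical part of the argument.

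For the additional equivalence with (iv) when $E$ is simple, \cref{thm-semimod} reduces the problem to showing that, for a finite simple acyclic graph, (iv) is equivalent to the absence of forked vertices. The forward direction is immediate: if (iv) holds, then for every vertex $v$ the ranges of the edges out of $v$ are distinct (by simplicity) and pairwise comparable under $\geq$, forming a chain with a unique maximum; hence no two distinct edges out of $v$ can both be maximal in the sense required by the forked definition. For the converse I plan induction on the length of a longest path starting at $v$ to show that the descendant set $D(v) = \{w \in E^0 \mid v \geq w\}$ is totally ordered by $\geq$, the base case being $v$ a sink. In the inductive step, absence of a forked vertex at $v$ yields a unique maximal range $r^{*}$ among edges out of $v$, and then $r^{*} \geq \rr(g)$ for every edge $g$ with $\s(g) = v$, so $D(v) = \{v\} \cup D(r^{*})$, which is totally ordered by the inductive hypothesis. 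Condition (iv) follows, since for any two edges $e$, $f$ with common source $v$ the ranges $\rr(e)$ and $\rr(f)$ lie in the chain $D(v)$.
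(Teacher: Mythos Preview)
Your overall structure for (i)$\Leftrightarrow$(ii)$\Leftrightarrow$(iii) matches the paper, and the implications (iii)$\Rightarrow$(ii)$\Rightarrow$(i) and (i)$\Rightarrow$(ii) are handled exactly as the paper does.

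The gap is in (ii)$\Rightarrow$(iii). You correctly reduce to excluding an $\mathfrak{M}_3$ sublattice and correctly note that, via \cref{thm-semimod}, $E$ has no forked vertices. But your stated plan---to extract from the diamond a vertex with two outgoing edges witnessing the forked conditions---is not how the contradiction actually arises, and you give no indication of how such an extraction would go. In the paper's argument the contradiction is not that a forked vertex appears, but that the diamond collapses: two of the three middle triples $(H_i,W_i,\varnothing)$ are forced to coincide. Two ingredients you do not mention are essential. First, the absence of forked vertices is used via \cref{no-fork} to simplify the meet, giving $H^{\wedge}\cup W^{\wedge}=(H_i\cup W_i)\cap(H_j\cup W_j)$ for all $i\neq j$. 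Second, combining this with $H^{\vee}\cup W^{\vee}=(H_i\cup W_i)\cup(H_j\cup W_j)$ and elementary set algebra yields $H_j\cup W_j=H_k\cup W_k$ for distinct $j,k$; then \cref{lemma-union} (in a finite acyclic graph the set $H\cup W$ determines the Wang pair) forces $(H_j,W_j,\varnothing)=(H_k,W_k,\varnothing)$. Without \cref{lemma-union} or an equivalent, your ``careful analysis of meets and joins'' has no visible endpoint, and there is no reason to expect a forked vertex to materialise directly.

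Your treatment of (iv) is correct but takes a different route from the paper. For ``no forked vertex $\Rightarrow$ (iv)'', the paper picks a $\leq$-minimal vertex $v$ at which (iv) fails and, choosing $e',f'$ with suitably maximal ranges, shows $v$ is forked. You instead prove the stronger fact that every descendant set $D(v)$ is totally ordered, by induction on the length of a longest path from $v$; the key step---that absence of a fork at $v$ gives a unique maximum range $r^*$, whence $D(v)=\{v\}\cup D(r^*)$---is valid in a finite simple acyclic graph. This is arguably cleaner, at the price of proving more than is needed.
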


In the next of our main theorems, we characterise those graphs $E$ such that $L(G(E))$ is atomistic.

\begin{thm}\label{thm-atomistic}
Let $E$ be a graph, let $G(E)$ be the graph inverse semigroup of $E$, and let $L(G(E))$ be the lattice of congruences of $G(E)$. Then every congruence in $L(G(E))$ is the join of a (possibly infinite) collection of atoms if and only if for every $v\in E^0$ one of the following holds:
 \begin{enumerate}[\rm (i)]
  \item $|\s^{-1}(v)| = 0$;
  \item $|\s^{-1}(v)| = 1$, $v$ does not belong to a cycle, and $v > u$ for some $u \in E^0$ such that $|\s^{-1}(u)| \neq 1$;
  \item $|\s^{-1}(v)| \geq 2$, and $\rr(e) \geq v$ for all $e \in \s^{-1}(v)$.
 \end{enumerate}
Moreover, $L(G(E))$ is atomistic if and only if, in addition to the above conditions on all vertices, $E^0$ has only finitely many strongly connected components and vertices $v$ such that $|\s^{-1}(v)| = 1$.
\end{thm}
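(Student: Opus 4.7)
The plan hinges on a careful characterisation of the atoms of $L(G(E))$ via their Wang triples, followed by an inductive forcing argument for joins. I would first use \cref{Luo-Wang-Thrm} to argue that a Wang triple is an atom of $L(G(E))$ if and only if it falls into one of three families: (A1) $(\{u\}, \varnothing, f_\infty)$ for a sink $u$; (A2) $(C, \varnothing, f_C)$ for a non-trivial minimal strongly connected component $C$ in which every vertex has out-degree at least $2$, where $f_C(c)=1$ on $c \in C(C)$ and $\infty$ otherwise; or (B) $(\varnothing, \{v\}, f_\infty)$ for some $v \in E^0$ with $|\s^{-1}(v)|=1$. The crucial point for (A2) is that if $C$ is a minimal SCC containing some $v$ with $|\s^{-1}(v)|=1$, then $(\varnothing,\{v\},f_\infty)$ lies strictly between the bottom and $(C,\varnothing,f_C)$, preventing the latter from being an atom; for (B) one uses that $f_\infty$ is the minimum element of the divisibility order on $\Z^+\cup\{\infty\}$.

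For the forward direction of the first equivalence, I would assume every vertex satisfies (i), (ii), or (iii), and first establish the structural observation that every non-trivial SCC is then automatically minimal with all vertices of out-degree $\geq 2$: any vertex on a cycle has $|\s^{-1}|\geq 1$, ruling out (i), and being on a cycle rules out (ii); hence (iii) must hold, so $|\s^{-1}|\geq 2$ and every outgoing edge stays in the SCC, meaning the SCC has no escaping edges. This ensures every vertex is accounted for by an atom (sinks via (A1), non-trivial SCC vertices via (A2), and $|\s^{-1}|=1$ vertices via (B)), and also that $C(W)=\varnothing$ in every Wang triple, so $f$ is determined by $H$. Given any Wang triple $(H,W,f)$, I would form the join $J$ of the (A1) or (A2) atom for each minimal SCC contained in $H$, together with the (B) atom for each case (ii) vertex in $H\cup W$. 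The bound $J\leq (H,W,f)$ is immediate. For $J\geq (H,W,f)$, each case (ii) vertex $v \in H$ has, by (ii), a unique forward path $v=v_0\to v_1\to\dots\to v_k=u$ ending at a stopper $u$ with $|\s^{-1}(u)|\neq 1$ lying in some minimal SCC $\subseteq H$; by induction from $u$ upward, using that $r(e)\sim 0$ implies $e \sim e \cdot r(e)\sim 0$ and then $v \sim e e^{-1}\sim 0$ from the (B) atom, each $v_i$ enters the $H$-component of $J$. For $v \in W$, the (B) atom puts $v$ in the $W$-component of $J$, as $r(e) \notin H\supseteq H_J$ prevents absorption.

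For the contrapositive, I would suppose some $v$ fails all three conditions and exhibit a Wang triple that is not a join of atoms. In case (F1), with $|\s^{-1}(v)|=1$ and $v$ on a cycle in some SCC $C$, the SCC admits no (A2) atom; the triple $(C, \varnothing, f_C)$ when $C$ is minimal (or $(H_v, \varnothing, f_{H_v})$ otherwise) has $v \in H$, yet no join of atoms can place $v$ in its $H$-component, because joins of (B) atoms build only $W$-components and no (A1)/(A2) atom touches any vertex of $C$. In case (F2), the hereditary closure $H_v$ contains no minimal SCC, so the only atoms $\leq (H_v,\varnothing,f)$ are of type (B), which again only build $W$-components. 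In case (F3), the SCC of $v$ has a strictly smaller SCC below (that of $r(e)$), so $v$ lies in no minimal SCC, and $|\s^{-1}(v)|\neq 1$ precludes a (B) atom for $v$; consequently no atom places $v$ in its $H$-component, while $(H_v,\varnothing,f)$ does.

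For the atomistic characterisation, I would observe that under conditions (i)--(iii) the atoms correspond bijectively to the minimal SCCs of $E$ together with the vertices of out-degree $1$, so the total number of atoms is finite exactly when $E^0$ has finitely many SCCs and finitely many vertices with $|\s^{-1}(v)|=1$. Under such finiteness, every congruence is automatically a finite join of atoms by the forward direction, so $L(G(E))$ is atomistic; conversely, if either set is infinite, the universal congruence $(E^0,\varnothing,f)$ requires one atom per minimal SCC and per case (ii) vertex in its expression as a join, which is infinite and therefore impossible for an atomistic lattice. The main technical obstacle will be verifying the forcing computation in the forward direction, confirming that (B) atoms reliably propagate the relation $\sim 0$ along chains of case (ii) vertices starting from minimal SCCs.
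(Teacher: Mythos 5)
Your proposal is correct, and its core coincides with the paper's argument: your atom classification (A1)/(A2)/(B) is exactly \cref{atoms} (the paper merges (A1) and (A2) into the single case of a hereditary strongly connected component whose non-sink vertices have out-degree at least $2$), and your sufficiency direction --- decompose $H$ into hereditary strongly connected components, sinks, and out-degree-one vertices forced downward along their unique edges --- is the same decomposition the paper uses, with your element-level propagation of $\sim 0$ playing the role of the set $J$ in \cref{join-meet}. The genuine organisational difference is in the necessity direction: the paper proves a general lemma (\cref{str-con}) saying that every vertex in the $H$-component of an arbitrary (possibly infinite) join of atoms satisfies (i)--(iii), and then applies it once to $(E^0,\varnothing,1)$, whereas you run a case analysis (F1)--(F3) on a failing vertex $v$ and build a tailored witness such as $(C,\varnothing,f_C)$ or the hereditary closure $H_v$; both routes rest on the same obstruction, namely that no atom's hereditary part can contain any vertex $u$ with $u\geq v$, so $v$ could only enter the $H$-component by absorption along a forced path of out-degree-one vertices ending at a vertex all of whose edges land in the hereditary union, which cannot occur. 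Two points where your sketch needs the care the paper supplies: \cref{join-meet} is only a binary formula, so the infinite-join identities you implicitly use, such as $\bigvee_{i}(\varnothing,\{v_i\},1)=(\varnothing,\bigcup_i\{v_i\},1)$, must be justified directly (the paper does this inside the proof of \cref{str-con}); and in your case (F2) the phrase ``$H_v$ contains no minimal SCC'' is not quite accurate, since all vertices below $v$ having out-degree one does not preclude a hereditary cycle --- your conclusion survives because what actually matters is that no atom of type (A1)/(A2) lies below the witness, and none does, as $H_v$ contains no sink and no vertex of out-degree at least $2$.
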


An example of a graph satisfying the conditions of \cref{thm-atomistic} is given in \cref{fig-atomistic}.

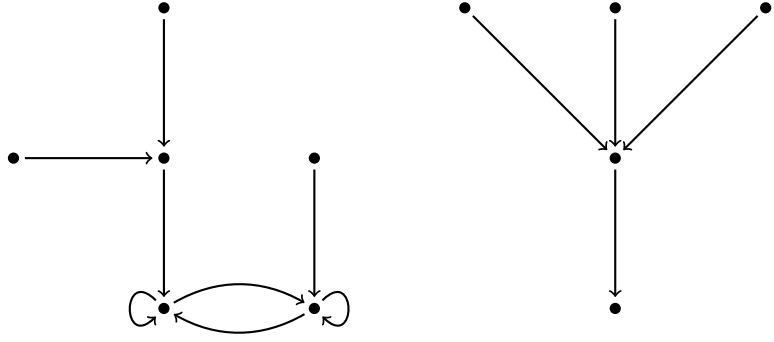
\begin{figure}
 \centering
 \begin{tikzpicture}
  \tikzstyle{every node}=[style=vertex]
  \node  (1) at (2, 4) {};
  \node  (2) at (6, 4) {};
  \node  (3) at (8, 4) {};
  \node  (4) at (10, 4) {};

  \node  (5) at (0, 2) {};
  \node  (6) at (2, 2) {};
  \node  (7) at (4, 2) {};
  \node  (8) at (8, 2) {};

  \node (9) at (2, 0) {};
  \node (10) at (4, 0) {};
  \node (11) at (8, 0) {};

  \draw[style=a] (1) to (6);
  \draw[style=a] (2) to (8);
  \draw[style=a] (3) to (8);
  \draw[style=a] (4) to (8);
  \draw[style=a] (5) to (6);
  \draw[style=a] (6) to (9);
  \draw[style=a] (7) to (10);
  \draw[style=a] (8) to (11);
  \draw[style=a, bend left] (9) to (10);
  \draw[style=a, loop, out=135, in=225, looseness=16] (9) to (9);
  \draw[style=a, bend left] (10) to (9);
  \draw[style=a, loop, in=-45, out=45, looseness=16] (10) to (10);

 \end{tikzpicture}
 \caption{An example of a graph satisfying the conditions of
 \cref{thm-atomistic}.} \label{fig-atomistic}
\end{figure}

The conditions of \cref{thm-atomistic} simplify significantly when the graph is finite and acyclic.

\begin{cor}\label{cor-atomistic}
Let $E$ be a finite acyclic graph, let $G(E)$ be the graph inverse semigroup of $E$, and let $L(G(E))$ be the lattice of congruences of $G(E)$. Then the following are equivalent:
 \begin{enumerate}[\rm(i)]
  \item $|s^{-1}(v)| \leq 1$ for all $v\in E ^ 0$;
  \item $L(G(E))$ is isomorphic to the power set lattice $\mathcal{P}(E^0)$;
  \item $L(G(E))$ is atomistic.
 \end{enumerate}
\end{cor}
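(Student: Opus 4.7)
The plan is to prove (i) $\Rightarrow$ (ii) $\Rightarrow$ (iii) $\Rightarrow$ (i). Of these, (ii) $\Rightarrow$ (iii) is immediate: the power set lattice $\mathcal{P}(E^0)$ is generated by its singletons, which are precisely its atoms.

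For (iii) $\Rightarrow$ (i), I would appeal to \cref{thm-atomistic}, under which every vertex $v \in E^0$ satisfies one of the three conditions stated there. Condition (iii) of \cref{thm-atomistic} requires some edge $e \in \s^{-1}(v)$ with $\rr(e) \geq v$, producing either a loop at $v$ (if $\rr(e) = v$) or a closed walk $v \to \rr(e) \to \cdots \to v$ of positive length (if $\rr(e) > v$), and in either case a cycle; this is impossible since $E$ is acyclic. Hence every vertex must satisfy condition (i) or (ii) of \cref{thm-atomistic}, each of which forces $|\s^{-1}(v)| \leq 1$.

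For (i) $\Rightarrow$ (ii), I would use that, since $E$ is finite and acyclic, every Wang triple has the form $(H, W, \varnothing)$, and consider the map
\[
\Psi : L(G(E)) \to \mathcal{P}(E^0), \qquad \Psi(H, W, \varnothing) = H \cup W,
\]
which is well-defined since $H \cap W = \varnothing$. By \cref{Luo-Wang-Thrm} it then suffices to show $\Psi$ is an order-isomorphism. For surjectivity, given $X \subseteq E^0$, I would take $H$ to be the largest hereditary subset of $X$, namely $\{v \in X : u \in X \text{ whenever } v \geq u\}$, and let $W = X \setminus H$; the key verification is that each $v \in W$ has $|\s_{E \setminus H}^{-1}(v)| = 1$, because hypothesis (i) forces a unique outgoing edge from $v$ whose range cannot lie in $H$ (otherwise all descendants of $v$ would be in $X$, putting $v$ itself in $H$). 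Forward order preservation, that $(H_1, W_1, \varnothing) \leq (H_2, W_2, \varnothing)$ implies $H_1 \cup W_1 \subseteq H_2 \cup W_2$, is immediate from the definition of the order on Wang triples.

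The main obstacle is reverse order preservation: showing $H_1 \cup W_1 \subseteq H_2 \cup W_2$ implies $H_1 \subseteq H_2$ (the remaining inclusion $W_1 \setminus H_2 \subseteq W_2$ is immediate). I would argue by contradiction: suppose $v \in H_1 \setminus H_2$. Then $v \in W_2$, so $|\s_{E \setminus H_2}^{-1}(v)| = 1$; combined with hypothesis (i), this forces the unique outgoing edge $e$ from $v$ to satisfy $\rr(e) \notin H_2$. Since $H_1$ is hereditary, $\rr(e) \in H_1$, and so $\rr(e) \in H_1 \setminus H_2 \subseteq W_2$ (using $H_1 \cup W_1 \subseteq H_2 \cup W_2$). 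The same argument now applies to $\rr(e)$, and iteration along the unique forward path out of $v$ produces an infinite strictly descending chain of vertices in $E^0$, contradicting the finiteness of $E$ (with acyclicity ruling out the chain closing into a loop). Both hypotheses are essential: without out-degree at most one the iteration branches, and without finite acyclicity the chain need not terminate or can close into a loop.
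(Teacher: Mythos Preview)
Your proposal is correct. The implications (ii)$\Rightarrow$(iii) and (iii)$\Rightarrow$(i) match the paper's proof. For (i)$\Rightarrow$(ii), both you and the paper use the same map $\Psi((H,W,\varnothing)) = H\cup W$, but establish that it is a lattice isomorphism differently. The paper proves injectivity via \cref{lemma-union}, surjectivity by writing an arbitrary subset as a join of atomic Wang triples using \cref{join-meet}, and then verifies directly that $\Psi$ preserves joins and meets from the explicit formulas in \cref{join-meet} and \cref{no-fork}. You instead show that $\Psi$ is an order-isomorphism: your surjectivity argument constructs the preimage Wang triple explicitly (taking $H$ to be the largest hereditary subset of the given set), and your reverse-order-preservation argument is essentially the descending-chain technique underlying \cref{lemma-union}. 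Your route is a little more self-contained, sidestepping the forked-vertex machinery of \cref{no-fork} and the explicit meet/join formulas; the paper's route, on the other hand, makes it transparent that $\Psi$ carries $\vee,\wedge$ to $\cup,\cap$.
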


For graphs $E$ such that $G(E)$ is infinite, $L(G(E))$ being isomorphic to $\mathcal{P}(E^0)$ is generally not equivalent to $L(G(E))$ being atomistic. For example, if $|E^0| = \aleph_0$, then the number of atoms in $L(G(E))$ is at most $\aleph_0$ (this follows from \cref{atoms} and \cref{Luo-Wang-Thrm}), and so the cardinality of the lattice generated by atoms is at most $\aleph_0$ also. On the other hand, $|\mathcal{P}(E^0)| = 2^{\aleph_0} > \aleph_0$. Hence $L(G(E))$ is not atomistic if it is isomorphic to $\mathcal{P}(E^0)$.

The last of our main theorems establishes a generating set for $L(G(E))$ in terms of the graph $E$, when it is finite and simple.

\begin{thm}\label{thm-generators}
Let $E$ be a finite simple graph, let $G(E)$ be the graph inverse semigroup of $E$, let $L(G(E))$ be the lattice of congruences on $G(E)$, and let $\mathcal{A} \subseteq L(G(E))$. Then  $\mathcal{A}$ generates $L(G(E))$ if and only if $\mathcal{A}$ contains all  the congruences of the following types:
 \begin{enumerate}[\rm (i)]
  \item $(\{v\}, \varnothing, \varnothing)$, such that $v\in E^0$ and $|\s^{-1}(v)| = 0$;
  \item $(H, \{v\}, \varnothing)$, such that $v\in E^0$, $|\s^{-1}(v)| > 0$, and $H$ is a minimal (with respect containment) hereditary subset of $E^0$ satisfying $|\s_{E\setminus H}^{-1}(v)| = 1$.
 \end{enumerate}
\end{thm}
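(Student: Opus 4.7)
Since $E$ is finite and simple (in particular, finite and acyclic), $L(G(E))$ is a finite lattice. My strategy is to identify the congruences of types (i) and (ii) with the join-irreducible elements of $L(G(E))$; the theorem then follows from the standard fact that in a finite lattice, the set of join-irreducibles is the unique minimum generating set, so a subset $\mathcal{A}$ generates the lattice if and only if it contains every join-irreducible element.

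For the join-irreducibility of type (i): when $v$ is a sink, $\{v\}$ is a minimal non-empty hereditary subset of $E^0$, so $(\{v\}, \varnothing, \varnothing)$ is an atom of $L(G(E))$ and hence join-irreducible. For type (ii), I would show that $(H, \{v\}, \varnothing)$ covers a unique element, namely $(H, \varnothing, \varnothing)$. That $(H, \varnothing, \varnothing) \prec (H, \{v\}, \varnothing)$ follows directly from the order on Wang triples, since any triple $(H', W', \varnothing)$ fitting strictly between these two would be forced to have hereditary part exactly $H$ and $W$-part strictly between $\varnothing$ and $\{v\}$, which is impossible. For the uniqueness of this cover, any other triple $(H', W', \varnothing) < (H, \{v\}, \varnothing)$ must have $H' \subsetneq H$; the minimality of $H$ then forces $|\s_{E \setminus H'}^{-1}(v)| > 1$, hence $W' = \varnothing$; but then $(H', \varnothing, \varnothing) < (H, \varnothing, \varnothing) < (H, \{v\}, \varnothing)$, so $(H', \varnothing, \varnothing)$ is not a cover.

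The other direction requires expressing an arbitrary Wang triple $(H, W, \varnothing)$ as a join of type (i) and (ii) triples. The natural construction takes $(\{v\}, \varnothing, \varnothing)$ for each sink $v \in H$, and $(H^*(v), \{v\}, \varnothing)$ for each $v \in W$ together with each non-sink $v \in H$. Here $H^*(v) = \bigcup_{e \in \s^{-1}(v) \setminus \{e_v\}} \{x \in E^0 : \rr(e) \geq x\}$, with $e_v$ chosen to be the unique out-edge of $v$ surviving in $E \setminus H$ when $v \in W$, and chosen arbitrarily otherwise. Verifying that the join of these triples equals $(H, W, \varnothing)$ relies on the fact that joins in the Wang-triple lattice enlarge the hereditary component by a closure operation: after uniting hereditary parts, any vertex all of whose surviving out-edges point into the enlarged hereditary set is absorbed into it, and the process iterates until stable. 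The principal obstacle will be handling this absorption carefully, especially for the non-sink vertices of $H$; an induction along the well-founded order $\geq$ on $E^0$ (well-founded since $E$ is finite and acyclic) lets one show that the hereditary component of the join grows to precisely $H$ while the $W$-part ends up as exactly $W$, completing the argument.
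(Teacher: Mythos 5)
Your overall route---treat $L(G(E))$ as a finite lattice, show that the congruences of types (i) and (ii) are join-irreducible and that every congruence is a join of such, and invoke the standard fact that a subset of a finite lattice generates it if and only if it contains every join-irreducible element---is sound, and it is essentially the paper's own argument in different clothing (the paper proves exactly these two facts: indecomposability of the listed congruences, and an explicit decomposition of an arbitrary Wang triple). However, your decomposition step contains a genuine gap. For a non-sink $v\in H$ you let $e_v$ be ``chosen arbitrarily'' and set $H^*(v)=\bigcup_{f\in\s^{-1}(v)\setminus\{e_v\}}\{x\in E^0 \mid \rr(f)\geq x\}$. If some $g\in\s^{-1}(v)\setminus\{e_v\}$ satisfies $\rr(g)\geq\rr(e_v)$, then $\rr(e_v)\in H^*(v)$, so $|\s^{-1}_{E\setminus H^*(v)}(v)|=0$ and $(H^*(v),\{v\},\varnothing)$ is not even a Wang triple; moreover, minimality of $H^*(v)$ (needed for the triple to be of type (ii), hence to be guaranteed to lie in $\mathcal{A}$) requires the same condition $\rr(g)\not\geq\rr(e_v)$ for all $g\in\s^{-1}(v)\setminus\{e_v\}$. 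So $e_v$ must be chosen so that $\rr(e_v)$ is maximal among the ranges of edges leaving $v$, and it is precisely here that the simplicity hypothesis enters: acyclicity together with the absence of parallel edges rules out $\rr(g)\geq\rr(e_v)$ for $g\neq e_v$. (The paper arranges this by stratifying $H$ according to the longest path-length to a sink and choosing $e_h$ into the next stratum down.) This point cannot be left to an arbitrary choice, since the theorem fails for graphs with parallel edges (\cref{fig-zak-counter-ex}), where no valid choice of $e_v$ exists. The remaining verification that the join of your generators equals $(H,W,\varnothing)$ is only sketched, but the induction along the acyclic order that you propose is the same device the paper uses, so that part is a matter of writing it out.

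There is also a smaller slip in your uniqueness-of-cover argument for a type (ii) congruence $(H,\{v\},\varnothing)$: from $H'\subsetneq H$ and the minimality of $H$ you correctly get $|\s^{-1}_{E\setminus H'}(v)|\neq 1$, hence $v\notin W'$, but this yields only $W'\subseteq H$, not $W'=\varnothing$ (for instance $W'$ may contain vertices of $H\setminus H'$ having exactly one out-edge surviving in $E\setminus H'$). The conclusion nevertheless survives, because $H'\subseteq H$ and $W'\subseteq H$ already give $(H',W',\varnothing)\subseteq(H,\varnothing,\varnothing)$ by the definition of the order on Wang triples, so $(H,\varnothing,\varnothing)$ is the unique lower cover; this is exactly how the paper establishes indecomposability. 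That defect is repairable in one line, whereas the choice of $e_v$ above is the substantive missing idea in your plan.
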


The statement in \cref{thm-generators} does not hold for graphs with parallel edges. For example, if $E$ is the graph given in \cref{fig-zak-counter-ex}, then the only congruences on $G(E)$ of types (i) and (ii) in \cref{thm-generators} are of the form $(\{v\}, \varnothing, \varnothing)$, where $v\in E^0$ and $|\s^{-1}(v)| = 0$. It follows (using \cref{join-meet}) that the congruence $(E^0, \varnothing, \varnothing)$ on $G(E)$ is not a join of congruences of types (i) and (ii).

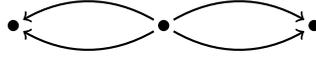
\begin{figure}
 \centering
 \begin{tikzpicture}
  \node [vertex] (1)   at (0, 0) {};
  \node [vertex] (2)   at (2, 0) {};
  \node [vertex] (3)   at (4, 0) {};

  \draw[style=a, bend left]  (2) to (1);
  \draw[style=a, bend right] (2) to (1);
  \draw[style=a, bend left]  (2) to (3);
  \draw[style=a, bend right] (2) to (3);
 \end{tikzpicture}
 \caption{A graph $E$ with parallel edges, for which the conclusion of
 \cref{thm-generators} does not hold.}
 \label{fig-zak-counter-ex}
\end{figure}

%%%%%%%%%%%%%%%%%%%%%%%%%%%%%%%%%%%%%%%%%%%%%%%%%%%%%%%%%%%%%%%%%%%%%%%%%%%%%%%
\section{Meets, joins, and covers}
%%%%%%%%%%%%%%%%%%%%%%%%%%%%%%%%%%%%%%%%%%%%%%%%%%%%%%%%%%%%%%%%%%%%%%%%%%%%%%%

In this section, we describe the circumstances under which one congruence covers another in a graph inverse semigroup, in terms of Wang triples. This fact will be used repeatedly in the paper.

We begin by stating a result from~\cite{Luo2021aa} that describes the meets and joins of Wang triples, for convenience of reference.

\begin{prop}[Lemmas 2.7 and 2.8 in~\cite{Luo2021aa}] \label{join-meet}
Let $E$ be a graph, let $(H_1,W_1,f_1)$ and $(H_2,W_2,f_2)$ be Wang triples on $E$, and set 
 \[
 V_0 = \{v \in (W_1 \cup W_2) \setminus (H_1 \cup H_2) \mid \s_{E\setminus (H_1 \cup H_2)}^{-1}(v) = \varnothing\}
 \] 
 and
 \begin{align*}
  J = \{v \in (W_1 \cup W_2) \setminus (H_1 \cup H_2) & \mid \exists e_1\cdots e_n \in \Path(E) \ \forall i\in \{2, \dots, n\} \\
    & (\s(e_1) = v, \, \rr(e_n) \in V_0, \, \s(e_i) \in W_1 \cup W_2)\}.
 \end{align*}
Then 
 \[
 (H_1,W_1,f_1) \land (H_2,W_2,f_2) = (H_1 \cap H_2, (W_1 \cap H_2) \cup (W_2 \cap H_1) \cup ((W_1 \cap W_2)\setminus V_0), \mathrm{lcm}(f_1,f_2)),
 \] 
where $\mathrm{lcm}(f_1,f_2)(c) = \mathrm{lcm}(f_1(c),f_2(c))$ for all $c \in C(E^0)$, and 
 \[
 (H_1,W_1,f_1) \lor (H_2,W_2,f_2) = (H_1 \cup H_2 \cup J, (W_1 \cup W_2)\setminus (H_1 \cup H_2 \cup J), \mathrm{gcd}(f_1,f_2)),
 \] 
where $\mathrm{gcd}(f_1,f_2)(c) = \mathrm{gcd}(f_1(c),f_2(c))$ for all $c \in C(E^0)$.
\end{prop}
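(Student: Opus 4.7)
The plan is to work entirely within the lattice of Wang triples under the order $\leq$, translating back to $L(G(E))$ at the end via \cref{Luo-Wang-Thrm}. Since $(H, W, f) \leq (H', W', f')$ decomposes into the three conditions $H \subseteq H'$, $W \setminus H' \subseteq W'$, and $f'(c) \mid f(c)$, I would verify each of the proposed formulas by analysing the three components in turn: check that the proposed triple is a valid Wang triple, check that it is a common bound (below or above both inputs, as appropriate), and check that no strictly better common bound exists.

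For the meet $(H_1 \cap H_2,\ W,\ \mathrm{lcm}(f_1, f_2))$, hereditariness of $H_1 \cap H_2$ is immediate, and $\mathrm{lcm}(f_1, f_2)$ is a valid cycle function because $\mathrm{lcm}(1, n) = n$, cyclic invariance is preserved under lcm, and a cycle not lying in $C(H \cup W)$ can be shown to violate $C(H_i \cup W_i)$ for some $i$ (hence has $f_i$-value $\infty$, so lcm-value $\infty$). The delicate point is that the proposed $W$ satisfies the single-outgoing-edge condition. For $v \in W_1 \cap H_2$, heredity of $H_2$ forces every edge out of $v$ to land in $H_2$, so the edges in $E \setminus (H_1 \cap H_2)$ coincide with those in $E \setminus H_1$, giving exactly one; the case $v \in W_2 \cap H_1$ is symmetric. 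For $v \in (W_1 \cap W_2) \setminus V_0$, a short case analysis on the unique outgoing edges of $v$ in $E \setminus H_1$ and $E \setminus H_2$ shows either that they coincide---so $v \notin V_0$ and leaves exactly one surviving edge in $E \setminus (H_1 \cap H_2)$---or that they differ, in which case $v \in V_0$ and so is excluded. Maximality among common lower bounds then follows by case analysis on where a vertex of the $W$-component of any competitor sits relative to $H_1$, $H_2$, and $W_1 \cup W_2$, together with the observation that any common lower bound must have $f$-value a common multiple of $f_1(c)$ and $f_2(c)$.

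The join is more intricate, and I would expect it to be the main obstacle. The set $J$ captures a collapse phenomenon: whenever a vertex $v$ is in the $W$-component of a triple and its unique outgoing edge in $E \setminus H$ lands on a vertex already identified with zero, the relation $v \sim ee^{-1}$ combined with $\rr(e) \sim 0$ forces $v \sim 0$, so $v$ must be absorbed into the $H$-component. The base case is $V_0$: vertices in $W_1 \cup W_2$ that have lost all outgoing edges in $E \setminus (H_1 \cup H_2)$, hence cannot remain in a valid $W$-component over $H_1 \cup H_2$. Iterating the collapse reasoning along paths through $W_1 \cup W_2$ then produces exactly $J$. I would verify that $H_1 \cup H_2 \cup J$ is hereditary (using that $J$ is closed under the relevant path extensions), that the residual $(W_1 \cup W_2) \setminus (H_1 \cup H_2 \cup J)$ satisfies the single-outgoing-edge condition in $E \setminus (H_1 \cup H_2 \cup J)$ by construction, and that $\gcd(f_1, f_2)$ is a valid cycle function. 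Minimality then reduces to showing that the $H$-component of any common upper bound must contain $J$, which I would prove by induction on the length of a path witnessing membership in $J$: at each step, the fact that an intermediate vertex lies in $W_1 \cup W_2$ together with the already-established inclusion of the terminal vertex forces the intermediate vertex into $H$ via the congruence relations underlying \cref{Luo-Wang-Thrm}.
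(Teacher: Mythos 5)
The paper itself gives no proof of this proposition: it is imported verbatim from Luo and Wang (Lemmas 2.7 and 2.8 of the cited paper), so any argument you give is necessarily independent of the text. Judged on its own terms, the meet half of your outline is essentially sound: the case analysis you describe for the $W$-component (using heredity of $H_2$ for $v\in W_1\cap H_2$, and the coincidence-or-$V_0$ dichotomy for $v\in W_1\cap W_2$) is exactly what is needed, and the greatest-lower-bound check does reduce to locating a vertex of a competitor's $W$-component relative to $H_1$, $H_2$, $W_1$, $W_2$.

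The join half, however, has a genuine gap in the step you lean on most. Your minimality induction asserts that if an intermediate vertex $u$ of a witnessing path lies in $W_1\cup W_2$ and the next vertex of the path lies in the hereditary part $H'$ of a common upper bound, then $u$ is forced into $H'$. As stated this is false: if $u\in W_1$, the only relation available (at the congruence level, $u\sim ee^{-1}$, or at the triple level, the condition $|\s^{-1}_{E\setminus H'}(u)|=1$) concerns the \emph{unique} edge $e\in\s^{-1}(u)$ with $\rr(e)\notin H_1$, and the path edge leaving $u$ need not a priori be that edge; a vertex of $W_1$ can perfectly well have some outgoing edge landing in $H'$ while legitimately remaining in the $W$-component of an upper bound. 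The missing ingredient is the observation that a path witnessing membership in $J$ can never meet $H_1\cup H_2$: since $H_1$ and $H_2$ are hereditary and the path terminates in $V_0$, which is disjoint from $H_1\cup H_2$, every edge of the path has range outside $H_1\cup H_2$, and hence \emph{is} automatically the unique outgoing edge of its source avoiding $H_j$ for whichever $j$ places that source in $W_j$. With this in hand your induction (and the congruence-level collapse argument) goes through. The same observation is also doing the work you dismiss as ``by construction'': it is needed to see that $H_1\cup H_2\cup J$ is hereditary, and to see that $\mathrm{gcd}(f_1,f_2)$ is a valid cycle function one must rule out cycles lying inside $J\setminus(H_1\cup H_2)$ (otherwise the gcd need not equal $1$ on cycles of the new hereditary set); this again follows because a witnessing path starting on such a cycle would be forced to follow the cycle forever and could never reach $V_0$. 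So the plan is workable, but as written the key induction step and the validity checks for the join are not yet proofs.
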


Next, we characterise the situations where one Wang triple covers another. 

\begin{prop} \label{min-triple}
Let $E$ be a graph, and let $(H_1,W_1,f_1)$ and $(H_2,W_2,f_2)$ be Wang triples on $E$, such that $(H_1,W_1,f_1) \subsetneq (H_2,W_2,f_2)$. Then $(H_1,W_1,f_1) \prec (H_2,W_2,f_2)$, i.e., $(H_2,W_2,f_2)$ covers $(H_1,W_1,f_1)$, if and only if one of the following holds:
\begin{enumerate}[\rm (i)]
  \item $H_1 = H_2$, $W_1 = W_2$, and $f_2 \prec f_1$. (I.e., there is a cycle $c \in C(W_1)$ such that $f_1(c)/f_2(c)$ is a prime integer, and $f_1(d)=f_2(d)$ for all $d \in C(W_1)\setminus \{c\}$.)
  \item $H_1 = H_2$, $|W_2 \setminus W_1| = 1$, and $f_1 = f_2$.
  \item $H_1 \subsetneq H_2$, $W_1 \setminus H_2 = W_2$, 
  \[
  W_1 \cap H_2 = \{v \in H_2\setminus H_1 \mid |\s^{-1}_{E\setminus H_1}(v)|=1\},
  \] 
  $f_1(c) = f_2(c)$ for all $c \in C(W_1)$, and for each hereditary set $H_1 \subsetneq H' \subsetneq H_2$ there exists $v \in W_1\setminus H'$ such that $\rr(e) \in H'$ for all $e \in \s^{-1}(v)$.
 \end{enumerate}
Moreover, if (iii) holds, then $H_2\setminus H_1$ is downward directed.
\end{prop}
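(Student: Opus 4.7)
The plan is to work with Wang triples throughout, via \cref{Luo-Wang-Thrm}. Any intermediate triple $(H, W, f)$ with $(H_1, W_1, f_1) \subseteq (H, W, f) \subseteq (H_2, W_2, f_2)$ satisfies componentwise $H_1 \subseteq H \subseteq H_2$, $W_1 \setminus H \subseteq W$, $W \setminus H_2 \subseteq W_2$, and $f_2 \mid f \mid f_1$, subject to the Wang axioms on $(H, W, f)$. The proof splits into the two directions of the biconditional, plus a separate argument for the moreover.

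For the ``if'' direction, I assume each of (i), (ii), (iii) in turn and show an intermediate triple coincides with an endpoint. Cases (i) and (ii) follow immediately from the componentwise constraints: in (i), $H$ and $W$ are pinned down, and primality of $f_1(c)/f_2(c)$ in the affected cycle coordinate leaves only $f = f_1$ or $f = f_2$; in (ii), $|W_2 \setminus W_1| = 1$ and $f_1 = f_2$ leave only two choices for $W$. Case (iii) is more subtle. If $H = H_1$, the Wang axiom forces $W \cap H_2 \subseteq \{x \in H_2 \setminus H_1 : |\s^{-1}_{E\setminus H_1}(x)| = 1\} = W_1 \cap H_2$, collapsing $W$ to $W_1$ and $f$ to $f_1$, contradicting strictness. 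If $H_1 \subsetneq H \subsetneq H_2$, the intermediate-hereditary-set clause applied to $H' = H$ supplies $v \in W_1 \setminus H$ with every $\rr(e) \in H$; since $v \in W_1 \setminus H \subseteq W$, this contradicts the Wang axiom $|\s^{-1}_{E\setminus H}(v)| = 1$. Hence $H = H_2$, which combined with $W_2 = W_1 \setminus H_2$, the equality $f_1(c) = f_2(c)$ for all $c \in C(W_1)$, and $C(W_2) \subseteq C(W_1)$, forces $W = W_2$ and $f = f_2$.

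For the ``only if'' direction, assume the cover and split on whether $H_1 = H_2$. When $H_1 = H_2$, containment reduces to $W_1 \subseteq W_2$ and $f_2 \mid f_1$. If $W_1 \subsetneq W_2$, pick $v \in W_2 \setminus W_1$ and construct the intermediate Wang triple $(H_1, W_1 \cup \{v\}, \tilde f)$, where $\tilde f = f_1$ on $C(H_1 \cup W_1)$ and $\tilde f = f_2$ on $C(H_1 \cup W_2) \setminus C(H_1 \cup W_1)$; by the cover, this must equal $(H_2, W_2, f_2)$, yielding $|W_2 \setminus W_1| = 1$ and $f_1 = f_2$, which is (ii). If $W_1 = W_2$, any $g$ with $f_2 \mid g \mid f_1$ and $g \neq f_1, f_2$ (arising when $f_1(c)/f_2(c)$ is not prime in some cycle coordinate) provides an intermediate triple, so (i) must hold. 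When $H_1 \subsetneq H_2$, each clause of (iii) is verified by contrapositive through explicit intermediate-triple constructions: a vertex $v \in H_2 \setminus H_1$ with $|\s^{-1}_{E\setminus H_1}(v)| = 1$ outside $W_1 \cap H_2$ can be adjoined directly to $W_1$ to form $(H_1, W_1 \cup \{v\}, f_1)$; the remaining clauses are handled analogously, and the intermediate-hereditary-set clause follows by converting a failing $H'$ (one in which every $v \in W_1 \setminus H'$ has some $\rr(e) \notin H'$) into a valid Wang triple based at $H'$.

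The moreover is proved by contradiction. Suppose $u, v \in H_2 \setminus H_1$ share no common lower bound in $H_2 \setminus H_1$. Applying (iii) to the hereditary set $\{x \in H_2 : x \not\geq u\}$ (strictly between $H_1$ and $H_2$, containing $v$ but not $u$), the only valid witness $w$ lies in $W_1 \cap H_2$ (the case $w \notin H_2$ is excluded, since $w \in W_2$ would then have an edge with range outside $H_2$), and a short path argument along the unique out-of-$H_1$ edge of $w$ forces $w = u$. Thus $u \in W_1 \cap H_2$ has a unique out-of-$H_1$ edge whose range $u_1 \in H_2 \setminus H_1$ satisfies $u_1 \not\geq u$. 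Since $v$ and $u_1$ still share no common lower bound, the argument iterates to produce a strictly descending chain $u = u_0 > u_1 > u_2 > \cdots$ in $H_2 \setminus H_1$. Applying (iii) to $H^* = \{x \in H_2 : x \not\geq u_i \text{ for all } i\}$ (again strictly between $H_1$ and $H_2$), the resulting $w^* \in W_1 \cap H_2$ satisfies $w^* \geq u_i$ for some $i$, and tracing its unique out-of-$H_1$ edge gives range either $u_{i+1}$ (when $w^* = u_i$) or a vertex $\geq u_i$ (otherwise), in either case outside $H^*$, contradicting the defining property of $w^*$. The main obstacle is case (iii) throughout: constructing intermediate Wang triples satisfying all three Wang axioms simultaneously, and closing the descent via the limit set $H^*$, both require precise orchestration of the hereditary, vertex-set, and cycle-function components.
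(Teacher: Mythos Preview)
Your argument for the biconditional is correct and tracks the paper's proof closely: where the paper invokes \cite[Lemmas 2.1, 2.3, 2.4]{Luo2021aa} for the individual clauses, you reconstruct those arguments directly by exhibiting intermediate Wang triples. One small technical slip: in the ``only if'' direction with $H_1 = H_2$ and $W_1 \subsetneq W_2$, your $\tilde f$ should take the value $\infty$ (not $f_2$) on cycles outside $C(H_1 \cup W_1 \cup \{v\})$, so that $(H_1, W_1 \cup \{v\}, \tilde f)$ is a valid Wang triple before you know $|W_2 \setminus W_1| = 1$. With that choice, the cover forces $W_2 = W_1 \cup \{v\}$ and $\tilde f = f_2$, which in turn gives $f_2 = \infty$ on every new cycle through $v$ and hence $f_1 = f_2$ globally.

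Your treatment of the ``moreover'' is genuinely different from the paper's. The paper performs a single upward saturation: starting from $G_1 = H_1 \cup \{x \in E^0 : u \geq x\}$, it repeatedly adjoins those $x \in W_1$ whose out-edges all land in the current set, and takes the union $H'$. This $H'$ is hereditary and admits no witness in $W_1 \setminus H'$, so condition (iii) forces $H' = H_2$; but $v$ can never enter any $G_i$ (tracing the unique out-of-$H_1$ edges back to $G_1$ would produce a common lower bound of $u$ and $v$), giving the contradiction. Your route instead iterates (iii) downward: applying it to $\{x \in H_2 : x \not\geq u_i\}$ forces the witness to be $u_i$ itself (via your path argument), producing a strictly descending chain $u = u_0 > u_1 > \cdots$ in $W_1 \cap H_2$, and a final application to the limit set $H^* = \{x \in H_2 : x \not\geq u_i \text{ for all } i\}$ yields a witness whose unique out-of-$H_1$ edge cannot land in $H^*$. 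Both arguments are correct. The paper's is shorter and handles everything in one construction; yours makes each use of (iii) explicit and, as a byproduct, reveals that any $u$ failing downward-directedness must lie in $W_1 \cap H_2$.
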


\begin{proof}
If (i) holds, then $(H_1,W_1,f_1) \prec (H_2,W_2,f_2)$, by~\cite[Lemma 2.4]{Luo2021aa}. If (ii) holds, then it follows immediately from the definition of the ordering on Wang triples (or~\cite[Lemma 2.3]{Luo2021aa}) that $(H_1,W_1,f_1) \prec (H_2,W_2,f_2)$. Let us now suppose that (iii) holds and that 
 \[
 (H_1,W_1,f_1) \subseteq (H',W',f') \subseteq (H_2,W_2,f_2)
 \] 
for some Wang triple $(H',W',f')$. We will show that either $(H',W',f') = (H_1,W_1,f_1)$ or $(H',W',f') = (H_2,W_2,f_2)$. Notice that necessarily $f_1(c) = f'(c) = f_2(c)$ for all $c \in C(W_1)$.

Suppose that $H_1 \subsetneq H' \subsetneq H_2$. Then, by hypothesis, there exists $v \in W_1\setminus H'$ such that $\rr(e) \in H'$ for all $e \in \s^{-1}(v)$. Thus $v \in W_1 \setminus (H' \cup W')$, which contradicts $(H_1,W_1,f_1) \subseteq (H',W',f')$. It follows that either $H' = H_1$ or $H' = H_2$. In the first case, $H' = H_1$, 
 \[
 W_1 = W_1 \setminus H_1 = W_1 \setminus H' \subseteq W',
 \] 
which implies that $W_1 \setminus H_2 = W' \setminus H_2 = W_2$. Since, by hypothesis, $W_1$ contains all $v \in H_2 \setminus H_1$ such that $|\s^{-1}_{E\setminus H_1}(v)|=1$, we see that $W_1 = W'$, and so $(H',W',f') = (H_1,W_1,f_1)$. In the second case, $H' = H_2$,
 \[
 W' = W' \setminus H' = W' \setminus H_2 \subseteq W_2 = W_1 \setminus H_2 = W_1 \setminus H' \subseteq W',
 \] 
which implies that $W' = W_2$. Since $W_2 \subseteq W_1$, it follows that $f' = f_2$, and so $(H',W',f') = (H_2,W_2,f_2)$, as desired.

For the converse, suppose that $(H_1,W_1,f_1) \prec (H_2,W_2,f_2)$. Let us also suppose, for the moment, that $H_1 = H_2$. If $W_1 = W_2$, then $f_2 \prec f_1$, by~\cite[Lemma 2.4]{Luo2021aa}. If $W_1 \subsetneq W_2$, then $|W_2 \setminus W_1| = 1$, and $f_1 = f_2$, by~\cite[Lemma 2.3]{Luo2021aa}. Thus, if $H_1 = H_2$, then exactly one of (i) or (ii) must hold. Let us now assume that $H_1 \subsetneq H_2$. Then $W_1 \setminus H_2 = W_2$, 
 \[
 W_1 \cap H_2 = \{v \in H_2\setminus H_1 \mid |\s^{-1}_{E\setminus H_1}(v)|=1\},
 \] 
and $f_1(c) = f_2(c)$ for all $c \in C(W_1)$, by~\cite[Lemma 2.1]{Luo2021aa}. Therefore to conclude the proof of the main claim, it suffices to take a hereditary set $H_1 \subsetneq H' \subsetneq H_2$ and show that there exists $v \in W_1\setminus H'$ such that $\rr(e) \in H'$ for all $e \in \s^{-1}(v)$.

Suppose, on the contrary, that $|\s^{-1}_{E\setminus H'}(v)| = 1$ for all $v \in W_1\setminus H'$. Let $W' = W_1\setminus H'$, and let $f': C(E^0) \to \mathbb{Z}^+ \cup \{\infty\}$ be the cycle function such that $f_1(c) = f'(c)$ for all $c\in C(W')$ and $f'(c)=1$ for all $c\in C(H')$. Then $(H',W', f')$ is a Wang triple such that 
 \[
 (H_1,W_1,f_1) \subsetneq (H',W',f') \subsetneq (H_2,W_2,f_2),
 \] 
contradicting our hypothesis. Therefore there must exist $v \in W_1\setminus H'$ such that $\rr(e) \in H'$ for all $e \in \s^{-1}(v)$.

For the final claim, suppose that (iii) holds, and let $u,v \in H_2\setminus H_1$. Now suppose that for all $w \in H_2\setminus H_1$ either $u \not\geq w$ or $v \not\geq w$. Let $G_1 = H_1 \cup \{x \in E^0 \mid u \geq x\}$, and for each $i > 1$ let 
 \[
 G_i = G_{i-1} \cup \{x \in W_1 \setminus G_{i-1} \mid \rr(\s^{-1}(x)) \subseteq G_{i-1}\}.
 \] 
Then, clearly, $H' = \bigcup_{i=1}^{\infty}G_i$ is hereditary, $H_1 \subsetneq H' \subseteq H_2$, and there is no $x \in W_1\setminus H'$ such that $\rr(e) \in H'$ for all $e \in \s^{-1}(x)$. Condition (iii) then implies that $H' = H_2$. Notice also that, by hypothesis, $v \notin G_1$, and that if $v \in W_1$, then it cannot be the case that $\rr(\s^{-1}(v)) \subseteq G_{i-1}$ for some $i > 1$. It follows that $v \notin H'$, in contradiction to $H' = H_2$. Therefore there must exist $w \in H_2\setminus H_1$ such that $u \geq w$ and $v \geq w$, i.e., $H_2\setminus H_1$ is downward directed.
\end{proof}

If the graph $E$ is finite and acyclic, then the conditions in \cref{min-triple} can be simplified substantially, as the next corollary shows. \cref{lemma-set-diff} follows fairly quickly from \cref{min-triple}, but the proof is omitted because we will not use this result directly.

\begin{cor}\label{lemma-set-diff}
Let $E$ be a finite acyclic graph, and let $(H_1, W_1, \varnothing)$ and $(H_2, W_2, \varnothing)$ be Wang triples on $E$, such that $(H_1, W_1, \varnothing) \subseteq (H_2, W_2, \varnothing)$. Then $(H_1, W_1, \varnothing) \prec (H_2, W_2, \varnothing)$ if and only if $|(H_2 \cup W_2) \setminus (H_1 \cup W_1)| = 1$.
\end{cor}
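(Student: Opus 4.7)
The plan is to apply \cref{min-triple} case by case. Because $E$ is finite and acyclic, $C(E^0) = \varnothing$, so cycle functions play no role, and alternatives (ii) and (iii) of \cref{min-triple} are the only possible descriptions of a cover in this setting. A short calculation using $H_1 \subseteq H_2$, $W_2 \cap H_2 = \varnothing$, and $W_1 \cap H_1 = \varnothing$ gives
\[
(H_2 \cup W_2) \setminus (H_1 \cup W_1) = (W_2 \setminus W_1) \sqcup \bigl((H_2 \setminus H_1) \setminus W_1\bigr),
\]
and both terms on the right turn out to be easy to evaluate in each of the relevant cases.

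For the forward direction, suppose $(H_1, W_1, \varnothing) \prec (H_2, W_2, \varnothing)$. In case (ii), $H_1 = H_2$ and $|W_2 \setminus W_1| = 1$, so the right-hand side above is immediately a singleton. In case (iii), the identities $W_2 = W_1 \setminus H_2$ and $W_1 \cap H_2 = \{v \in H_2 \setminus H_1 : |\s^{-1}_{E\setminus H_1}(v)| = 1\}$ reduce the right-hand side to $\{v \in H_2 \setminus H_1 : |\s^{-1}_{E\setminus H_1}(v)| \neq 1\}$. Since \cref{min-triple} tells us $H_2\setminus H_1$ is downward directed, and $E$ is finite and acyclic, this set admits a unique minimum $w$. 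I would first check that $w$ belongs to the above singleton candidate: a hypothetical unique edge in $\s^{-1}_{E\setminus H_1}(w)$ would, by heredity of $H_2$, have range in $H_2\setminus H_1$ and hence above $w$, producing a cycle. The key step is uniqueness: supposing some $w' \neq w$ also lies in the set, I would consider the hereditary intermediate $H' = H_1 \cup \{x \in H_2 : x \not\geq w'\}$, which contains $w$ (as $w \not\geq w'$, else the minimality of $w$ would force a cycle) but not $w'$, and hence satisfies $H_1 \subsetneq H' \subsetneq H_2$. Applying condition (iii) of \cref{min-triple} to $H'$ produces a vertex $v \in W_1 \setminus H'$ whose single edge outside $H_1$ has range in $H' \setminus H_1$ and hence not above $w'$; splitting on whether $v \in H_2$ (using the path from $v$ to $w'$, combined with heredity of $H_1$ to rule out the first edge dropping into $H_1$) or $v \notin H_2$ (using heredity of $H_2$ applied to the range of that edge) yields a contradiction in both sub-cases.

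For the backward direction, assume the difference is $\{x\}$ and let $(H', W', \varnothing)$ be any Wang triple sandwiched between $(H_1, W_1, \varnothing)$ and $(H_2, W_2, \varnothing)$. Then $H' \cup W'$ is sandwiched between $H_1 \cup W_1$ and $H_2 \cup W_2$, so it differs from one of the two by at most $\{x\}$. A short case analysis on whether $x$ lies in $H_2 \setminus H_1$ (which constrains $H' \in \{H_1, H_2\}$ by the sandwich together with the Wang-triple definition) or in $W_2 \setminus H_2$ (which forces $H_1 = H_2 = H'$ and $W' \in \{W_1, W_2\}$) then pins $(H', W', \varnothing)$ to one of the two endpoints, with $W'$ forced by the containment $W' \subseteq \{v \in E^0 \setminus H' : |\s^{-1}_{E\setminus H'}(v)| = 1\}$ together with the inclusions $W_1 \setminus H' \subseteq W'$ and $W' \setminus H_2 \subseteq W_2$.

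The main obstacle is the uniqueness step in case (iii) of the forward direction, which requires a careful choice of intermediate hereditary set to activate the subtle last clause of condition (iii) of \cref{min-triple}. Once that step is in hand, everything else is routine bookkeeping with the definitions.
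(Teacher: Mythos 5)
The paper itself omits the proof of this corollary (remarking only that it follows quickly from \cref{min-triple}), so there is no official argument to compare against line by line; judged on its own terms, your forward direction is correct. The decomposition $(H_2\cup W_2)\setminus(H_1\cup W_1)=(W_2\setminus W_1)\sqcup\bigl((H_2\setminus H_1)\setminus W_1\bigr)$ is valid, case (i) of \cref{min-triple} is vacuous for acyclic graphs, case (ii) is immediate, and in case (iii) your identification of the difference with $\{v\in H_2\setminus H_1 : |\s^{-1}_{E\setminus H_1}(v)|\neq 1\}$, the verification that the least element $w$ of the downward directed set $H_2\setminus H_1$ lies in it, and the uniqueness argument via the hereditary set $H_1\cup\{x\in H_2 : x\not\geq w'\}$ (activating the last clause of \cref{min-triple}(iii)) all check out.

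The backward direction, however, has a genuine gap. The claims that $x\in H_2\setminus H_1$ ``constrains $H'\in\{H_1,H_2\}$ by the sandwich together with the Wang-triple definition,'' that $x\in W_2$ ``forces $H_1=H_2$,'' and that $W'$ is ``forced by the containment $W'\subseteq\{v\in E^0\setminus H' : |\s^{-1}_{E\setminus H'}(v)|=1\}$ together with the inclusions'' are not formal consequences of those inclusions: each needs finiteness and acyclicity, via a follow-the-unique-edge descent (reaching a sink), which your sketch never invokes in this direction. In particular, when $H'=H_1$ you must exclude $x\in W'$, i.e.\ prove $|\s^{-1}_{E\setminus H_1}(x)|\neq 1$, and nothing in the stated containments does that. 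That this is not ``routine bookkeeping'' is shown by the (infinite, acyclic) graph with vertices $x,y_1,y_2,\dots$ and edges $x\to y_1$, $y_i\to y_{i+1}$: for $(H_1,W_1,\varnothing)=(\varnothing,\{y_1,y_2,\dots\},\varnothing)$ and $(H_2,W_2,\varnothing)=(E^0,\varnothing,\varnothing)$ the difference of the unions is the singleton $\{x\}$ and all your stated inclusions hold, yet $(\varnothing,W_1\cup\{x\},\varnothing)$ lies strictly between, so the cover fails; hence any correct justification must use finiteness somewhere, and yours does not. The missing ingredient is exactly the content of the paper's \cref{lemma-union} (stated and proved later for finite acyclic graphs): once you note that $H'\cup W'$ is sandwiched between $H_1\cup W_1$ and $H_2\cup W_2$ and therefore equals one of them, that lemma (or a re-proof of its sink-chasing argument) immediately pins $(H',W',\varnothing)$ to the corresponding endpoint, which is the cleanest way to finish. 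As written, the backward direction is incomplete.
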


%%%%%%%%%%%%%%%%%%%%%%%%%%%%%%%%%%%%%%%%%%%%%%%%%%%%%%%%%%%%%%%%%%%%%%%%
\section{Modularity}
%%%%%%%%%%%%%%%%%%%%%%%%%%%%%%%%%%%%%%%%%%%%%%%%%%%%%%%%%%%%%%%%%%%%%%%%

In this section we will prove \cref{thm-semimod} and \cref{thm-semimod-mod-dist}. We begin with a sequence of lemmas that will culminate in the proof of \cref{thm-semimod}.

\begin{lemma} \label{bifurcation}
Let $E$ be a graph containing a forked vertex. Then $L(G(E))$ is not lower-semimodular.
\end{lemma}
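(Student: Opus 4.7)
The plan is to exhibit two Wang triples $A$ and $B$, built from the forked vertex $v$ and its distinguished edges $e, f$, that witness the failure of lower-semimodularity: $A, B \prec A \vee B$ while $A \wedge B \not\prec A$.

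First I would observe that $v$ lies on no cycle in $E$. Any cycle through $v$ must begin with some edge $g \in \s^{-1}(v)$, so $\rr(g) \geq v \geq \rr(e)$ via $e$; by condition (i) this forces $g = e$, and symmetrically (ii) forces $g = f$, contradicting $e \neq f$. I would then define
\[
  H_A \;=\; \{u \in E^0 : u \not\geq \rr(e)\}, \qquad H_B \;=\; \{u \in E^0 : u \not\geq \rr(f)\}.
\]
Both sets are hereditary, and the forked conditions imply $\rr(g) \in H_A$ for every $g \in \s^{-1}(v) \setminus \{e\}$ and $\rr(g) \in H_B$ for every $g \in \s^{-1}(v) \setminus \{f\}$, while $v \notin H_A \cup H_B$ (as $v \geq \rr(e), \rr(f)$ via $e$ and $f$) and $\rr(e) \notin H_A$, $\rr(f) \notin H_B$. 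Hence $v$'s unique remaining edge in $E \setminus H_A$ is $e$, and in $E \setminus H_B$ is $f$. Let $W_A = \{u \in E^0 \setminus H_A : |\s^{-1}_{E \setminus H_A}(u)| = 1\}$ and $W_B$ analogously, so $v \in W_A \cap W_B$. Define cycle functions $\varphi_A, \varphi_B$ with value $1$ on $C(H_A), C(H_B)$ respectively and $\infty$ elsewhere, and put $A = (H_A, W_A, \varphi_A)$ and $B = (H_B, W_B, \varphi_B)$.

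For the cover $A \prec A \vee B$ (and, symmetrically, $B \prec A \vee B$) I would compute $A \vee B$ via \cref{join-meet}, observing that every out-edge of $v$ targets $H_A \cup H_B$ so that $v \in V_0$ and hence $v \in J$. The cover is then verified against \cref{min-triple}(iii): the identities on $H$, $W$, and $\varphi$ follow from the maximality of $W_A$ together with the forked conditions, while the technical heart is the intermediate-hereditary-set condition --- for every hereditary $H_A \subsetneq H' \subsetneq H_{A \vee B}$ one must find a \emph{ready} vertex $u \in W_A \setminus H'$ (one with $\rr(\s^{-1}(u)) \subseteq H'$). Such a vertex is produced by a minimality argument inside the ancestor cone $\{u : u \geq \rr(e)\}$: using the absence of cycles through $v$, one takes a $\geq$-minimal element of $H_{A \vee B} \setminus H'$, whose out-neighbours must then all lie in $H'$, and checks via the maximality of $W_A$ that this element lies in $W_A$.

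Finally, $A \wedge B \not\prec A$ is a quick consequence of \cref{join-meet} and \cref{min-triple}: since $v \in V_0$ one has $v \notin W_{A \wedge B}$, so the identity $W_{A \wedge B} \setminus H_A = W_A$ required by \cref{min-triple}(iii) fails (the right-hand side contains $v$, the left-hand side does not); cases (i) and (ii) are ruled out because $H_{A \wedge B} \subsetneq H_A$, strictly, as $\rr(f) \in H_A \setminus H_B$. Thus $L(G(E))$ is not lower-semimodular. The main obstacle is the intermediate-hereditary-set condition in the cover step, which demands the careful minimality argument in the ancestor cone of $\rr(e)$ sketched above, combining the absence of cycles through $v$ with the maximality of $W_A$ to guarantee that a ready vertex always lies in $W_A$.
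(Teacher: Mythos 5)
Your overall strategy is the paper's (two Wang triples built from the fork, covered by their join but with a meet that is not a cover below), and your argument that $A\wedge B\not\prec A$ is fine. The gap is in the cover step $A\prec A\vee B$, and it is caused by taking $H_A,H_B,W_A,W_B$ globally instead of restricting to the cone $X=\{x\in E^0: v\geq x\}$ as the paper does. With your global choice the claim is simply false in general. Counterexample: let $E^0=\{v,z,a,b,a_1,a_2\}$ with edges $e\colon v\to a$, $f\colon v\to b$, $z\to a_1$, $z\to a_2$, $z\to b$, $a_1\to a$, $a_2\to a$. Then $v$ is forked via $e,f$, and your construction gives $H_A=\{b\}$, $W_A=\{a_1,a_2,v\}$ (note $z\notin W_A$, since $z$ has two out-edges avoiding $H_A$), $H_B=\{a,a_1,a_2\}$, $W_B=\{z,v\}$. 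By \cref{join-meet}, $V_0=J=\{v,z\}$ and $A\vee B=(E^0,\varnothing,\varnothing)$. But $H'=\{a,a_1,a_2,b,v\}$ is hereditary with $H_A\subsetneq H'\subsetneq E^0$ and $W_A\subseteq H'$, so $(H',\varnothing,\varnothing)$ is a Wang triple strictly between $A$ and $A\vee B$; equivalently, the intermediate-set condition of \cref{min-triple}(iii) fails because $W_A\setminus H'=\varnothing$, the only ``ready'' vertex for $H'$ being $z\notin W_A$. This is exactly where your sketched repair breaks: a $\geq$-minimal element of $H_{A\vee B}\setminus H'$ (which moreover need not exist when $E$ is infinite) is indeed ready, but maximality of $W_A$ does not put it in $W_A$ --- it can be a common ancestor of $\rr(e)$ and $\rr(f)$ with several out-edges into the cone of $\rr(e)$, like $z$ above.

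The paper avoids all of this by intersecting everything with $X$: it takes $H_w=\{x\in X: x\not\geq\rr(f)\}$ and $W_w\subseteq X\setminus H_w$, so the join becomes $(X,\varnothing,f_X)$, and for any hereditary $H_w\subsetneq H'\subsetneq X$ heredity forces $\rr(f)\in H'$, whence every out-neighbour of $v$ lies in $H'$ (the others already lie in $H_w$ by the forked condition) while $v\notin H'$ (else $H'=X$). Thus $v$ itself is always the ready vertex, with no minimality argument and no finiteness or acyclicity assumptions. Your non-cover argument for the meet survives unchanged in that setting (the paper instead exhibits an explicit intermediate triple, which amounts to the same thing), but as written your witness does not establish the two covers, so the proof as proposed does not go through.
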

\begin{proof}
By hypothesis, there exist $v \in E^0$ and distinct $e,f \in \s^{-1}(v)$, such that $\rr(g) \not\geq \rr(e)$ for all $g \in \s^{-1}(v) \setminus \{e\}$, and $\rr(g) \not\geq \rr(f)$ for all $g \in \s^{-1}(v) \setminus \{f\}$. Let $u=\rr(e)$, $w=\rr(f)$, $X = \{x \in E^0 \mid v \geq x\}$, $H_u = \{x \in X \mid x \not\geq u\}$, and $H_w = \{x \in X \mid x \not\geq w\}$. Then, clearly, $X$, $H_u$, and $H_w$ are hereditary. Next, let
\[
W_u = \{y \in X\setminus H_u \mid |\s^{-1}_{E\setminus H_u}(y)|=1\} \text{ and } W_w = \{y \in X\setminus H_w \mid |\s^{-1}_{E\setminus H_w}(y)|=1\}.
\] 
Also, for each set $H \subseteq E^0$ let us denote by $f_H: C(E^0) \to \mathbb{Z}^+ \cup \{\infty\}$ the function such that $f_H(c) = 1$ for all $c \in C(H)$ and $f_H(c) = \infty$ for all $c \in C(E^0 \setminus H)$. Then $(H_u, W_u, f_u)$ and $(H_w, W_w, f_w)$ are Wang triples, where $f_u = f_{H_u \cup W_u}$ and $f_w = f_{H_w \cup W_w}$. Also, by construction, $v \in W_u$ and $v \in W_w$. By \cref{join-meet},
\[
(H_u, W_u, f_u) \lor (H_w, W_w, f_w) = (X, \varnothing, f_X),
\] 
since $|\s^{-1}_{E\setminus (H_u \cup H_w)}(v)|=0$ and $v \in (W_u \cup W_w) \setminus (H_u \cup H_w)$. Using \cref{join-meet} again, since $v \in V_0$, 
\[
(H_u, W_u, f_u) \land (H_w, W_w, f_w) = (H_u \cap H_w, W, f_{uw}),
\] 
for some set $W$ such that $W\setminus H_u \subseteq W_u \setminus \{v\}$, and $f_{uw} = \mathrm{lcm}(f_u, f_v)$. Then 
\[
(H_u \cap H_w, W, f_{uw}) \subsetneq (H_u, W_u \setminus \{v\},  f_{H_u \cup (W_u \setminus \{v\})}) \subsetneq (H_u, W_u, f_u),
\]
since $w \in H_u \setminus H_w$ implies that $H_u \cap H_w \subsetneq H_u$. Therefore, to conclude that $L(G(E))$ is not lower-semimodular it suffices to show that $(H_u, W_u, f_u) \prec (X, \varnothing, f_X)$ and $(H_w, W_w, f_w) \prec (X, \varnothing, f_X)$. Given the symmetry of the situation, we shall only show that $(H_w, W_w, f_w) \prec (X, \varnothing, f_X)$. By \cref{min-triple}, it is enough to prove that for any hereditary set $H_w \subsetneq H' \subsetneq X$ there exists $y \in W_w \setminus H'$ such that $\rr(g) \in H'$ for all $g \in \s^{-1}(y)$.

Suppose that $H_w \subsetneq H' \subsetneq X$ for some hereditary set $H'$, and let $x \in H' \setminus H_w$. Then $x \geq w$, and so $w \in H'$. Hence, by construction, $\rr(g) \in H'$ for all $g \in \s^{-1}(v)$. Moreover $v \in W_w \setminus H'$, since $H' \neq X$, giving the desired conclusion.
\end{proof}

\begin{lemma} \label{no-fork}
Let $E$ be a graph with no forked vertices. Then for any pair $(H_1,W_1,f_1)$ and $(H_2,W_2,f_2)$ of Wang triples  on $E$, we have $V_0 \cap W_1 \cap W_2 = \varnothing$, where $V_0$ is the set defined in \cref{join-meet}, and 
 \[
 (H_1,W_1,f_1) \land (H_2,W_2,f_2) = (H_1 \cap H_2, (W_1 \cap H_2) \cup (W_2 \cap H_1) \cup (W_1 \cap W_2), \mathrm{lcm}(f_1,f_2)).
 \]
\end{lemma}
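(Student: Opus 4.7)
The plan is to establish the first assertion, $V_0 \cap W_1 \cap W_2 = \varnothing$, by contradiction; the simplified meet formula then follows immediately from \cref{join-meet}, since the term $(W_1 \cap W_2) \setminus V_0$ collapses to $W_1 \cap W_2$. So I would suppose, toward a contradiction, that some $v$ lies in $V_0 \cap W_1 \cap W_2$, and then show that $v$ is a forked vertex of $E$, against the hypothesis.

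First, I would unpack the three memberships into local data at $v$. From $v \in W_1$, the definition of a Wang triple gives $v \notin H_1$ together with a \emph{unique} edge $e \in \s^{-1}(v)$ satisfying $\rr(e) \notin H_1$. Analogously, $v \in W_2$ produces a unique edge $f \in \s^{-1}(v)$ with $\rr(f) \notin H_2$. The condition $v \in V_0$ records that every edge $g \in \s^{-1}(v)$ satisfies $\rr(g) \in H_1 \cup H_2$. Combining these, $\rr(e) \in H_2 \setminus H_1$ and $\rr(f) \in H_1 \setminus H_2$; in particular $e \neq f$.

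Next I would verify that $(e,f)$ witnesses $v$ as a forked vertex. For condition (i), take any $g \in \s^{-1}(v) \setminus \{e\}$; uniqueness of $e$ forces $\rr(g) \in H_1$, while $\rr(e) \notin H_1$. Since $H_1$ is hereditary, $\rr(g) \geq \rr(e)$ would force $\rr(e) \in H_1$, a contradiction (the equality case $\rr(g) = \rr(e)$ is ruled out for the same reason). Hence $\rr(g) \not\geq \rr(e)$. Condition (ii) follows by the symmetric argument with the roles of $(e, H_1)$ and $(f, H_2)$ exchanged. Thus $v$ is forked, contradicting the hypothesis, and the intersection $V_0 \cap W_1 \cap W_2$ must be empty.

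There is no serious obstacle here: the argument is essentially a careful unpacking of definitions. The only point requiring attention is the bookkeeping of which hereditary set blocks which $\geq$-relation: the edge $e$, distinguished inside $W_1$, is protected from being reached by other edges out of $v$ by heredity of $H_1$, and symmetrically $f$ is protected by heredity of $H_2$. Once that correspondence is set up correctly, the contradiction is immediate, and the formula for the meet drops out of \cref{join-meet} without further work.
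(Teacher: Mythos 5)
Your proof is correct and follows essentially the same route as the paper: unpack $v \in V_0 \cap W_1 \cap W_2$ into the two distinguished edges $e,f$ with ranges in $H_2\setminus H_1$ and $H_1\setminus H_2$ respectively, use heredity of $H_1$ and $H_2$ to verify the two forked-vertex conditions, and then read off the simplified meet formula from \cref{join-meet}. No gaps.
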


\begin{proof}
Suppose that there exists $v \in V_0 \cap W_1 \cap W_2$. Since $v \in V_0$, we have $\s_{E\setminus (H_1 \cup H_2)}^{-1}(v) = \varnothing$. Since $v \in W_1 \cap W_2$, there must exist (distinct) $e,f \in \s^{-1}(v)$ such that $\rr(e) \in H_1\setminus H_2$, $\rr(f) \in H_2\setminus H_1$, and $\rr(g) \in H_1 \cap H_2$ for all $g \in \s^{-1}(v)\setminus \{e,f\}$. Since $H_1$ and $H_2$ are hereditary, it follows that $\rr(g)\not\geq\rr(e)$ and $\rr(g)\not\geq\rr(f)$ for all $g \in \s^{-1}(v)\setminus \{e,f\}$, $\rr(e) \not\geq \rr(f)$, and $\rr(f) \not\geq \rr(e)$. That is, $v \in E^0$ is forked.

Thus if $E$ has no forked vertices, then $V_0 \cap W_1 \cap W_2 = \varnothing$. The claim about $(H_1,W_1,f_1) \land (H_2,W_2,f_2)$ now follows from \cref{join-meet}.
\end{proof}

\begin{lemma} \label{J-empty}
Let $E$ be a graph, and suppose that $(H_1,W_1,f_1)$ and $(H_2,W_2,f_2)$ are Wang triples on $E$, such that the set $J$ defined in \cref{join-meet} is empty, and 
\[
(H_1,W_1,f_1) \prec (H_1,W_1,f_1) \lor (H_2,W_2,f_2).
\] 
Then 
\[
(H_1,W_1,f_1) \land (H_2,W_2,f_2) \prec (H_2,W_2,f_2).
\]
\end{lemma}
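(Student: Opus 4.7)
The plan is to apply \cref{min-triple} to the hypothesis $(H_1, W_1, f_1) \prec (H_1, W_1, f_1) \vee (H_2, W_2, f_2)$, which produces three cases according to the shape of the cover. Before branching, I will note that $J = \varnothing$ forces $V_0 = \varnothing$: any $v \in V_0$ witnesses its own membership in $J$ via the length-$0$ path at $v$. In view of \cref{join-meet}, this simplifies the meet to $\bigl(H_1 \cap H_2,\, (W_1 \cap H_2) \cup (W_2 \cap H_1) \cup (W_1 \cap W_2),\, \mathrm{lcm}(f_1,f_2)\bigr)$ and the join to $\bigl(H_1 \cup H_2,\, (W_1 \cup W_2) \setminus (H_1 \cup H_2),\, \gcd(f_1,f_2)\bigr)$. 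In each of the three cases I will verify one of the three cases of \cref{min-triple} for the meet cover.

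In cases (i) and (ii) the hereditary part of the join equals $H_1$, forcing $H_2 \subseteq H_1$ and $H_1 \cap H_2 = H_2$, so the meet and $(H_2, W_2, f_2)$ already agree on the hereditary part. For case (i), using $W_2 \setminus H_1 \subseteq W_1$, the meet's $W$-component is exactly $W_2$; it then remains to check the cycle function. I plan to show that the cycle $c \in C(W_1)$ realising the prime ratio $f_1(c)/\gcd(f_1,f_2)(c) = p$ satisfies $f_2(c) < \infty$, hence lies in $C(W_2)$, and to compute $\mathrm{lcm}(f_1,f_2)(c)/f_2(c) = p$ while $\mathrm{lcm}(f_1,f_2)(d) = f_2(d)$ on every other $d \in C(W_2)$ (any $d \in C(W_2) \setminus C(W_1)$ lies in $C(H_1)$ by hereditariness, so contributes trivially). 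For case (ii), I will identify the unique $v_0 \in W_2 \setminus (W_1 \cup H_1)$ as the sole element missing from the meet's $W$-component, and use $f_1 \mid f_2$ to conclude $\mathrm{lcm}(f_1,f_2) = f_2$, giving case (ii) of \cref{min-triple} for the meet cover.

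Case (iii) is where the real work lies, and it is the main obstacle. Here $H_1 \cap H_2 \subsetneq H_2$, so the target is case (iii) of \cref{min-triple} for the meet cover. The $W$-component identity $W_{\text{meet}} \setminus H_2 = W_2$ and the cycle-function condition transfer from the original via short computations; in particular, hereditariness of $H_2$ ensures that no vertex of $H_2 \setminus H_1$ emits an edge into $H_1 \setminus H_2$, so $|\s^{-1}_{E \setminus H_1}(v)| = |\s^{-1}_{E \setminus (H_1 \cap H_2)}(v)|$ for $v \in H_2 \setminus H_1$, which matches the descriptions of $W_1 \cap H_2$ with respect to the two hereditary sets. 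The hereditary-set condition is where $V_0 = \varnothing$ is essential. Given a hereditary set $H_1 \cap H_2 \subsetneq H' \subsetneq H_2$, I will set $\tilde{H} = H_1 \cup H'$, verify $H_1 \subsetneq \tilde{H} \subsetneq H_1 \cup H_2$, and invoke the original case (iii) condition to obtain $u \in W_1 \setminus \tilde{H}$ with $\rr(g) \in \tilde{H}$ for every $g \in \s^{-1}(u)$. I then claim $u \in H_2$: otherwise $u \in (W_1 \cup W_2) \setminus (H_1 \cup H_2)$, and $V_0 = \varnothing$ forces $u$ to emit an edge with range outside $H_1 \cup H_2$, contradicting $\rr(g) \in \tilde{H} \subseteq H_1 \cup H_2$. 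Hence $u \in W_1 \cap H_2 \subseteq W_{\text{meet}}$, and $\tilde{H} \cap H_2 = (H_1 \cap H_2) \cup H' = H'$ yields $\rr(g) \in H'$ for all $g \in \s^{-1}(u)$, completing the verification.
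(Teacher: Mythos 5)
Your proposal is correct and takes essentially the same route as the paper's proof: apply \cref{min-triple} to the cover hypothesis, use $V_0 \subseteq J = \varnothing$ to simplify the meet via \cref{join-meet}, and in the third case pass from a hereditary set $H'$ to $H_1 \cup H'$, using $V_0 = \varnothing$ to force the witness vertex into $W_1 \cap H_2$. The extra detail you give in case (i) (locating the prime-ratio cycle inside $C(W_2)$ and checking the remaining cycles lie in $C(H_1)$) is left implicit in the paper but is the same argument.
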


\begin{proof}
By \cref{join-meet}, 
 \[
 (H_1,W_1,f_1) \lor (H_2,W_2,f_2) = (H, (W_1 \cup W_2)\setminus H, \mathrm{gcd}(f_1,f_2)),
 \] 
where $H =  H_1 \cup H_2 \cup J = H_1 \cup H_2$. Since 
 \[
 (H_1,W_1,f_1) \prec (H_1,W_1,f_1) \lor (H_2,W_2,f_2),
 \] 
by \cref{min-triple}, there are three possible cases, which we examine individually.

\textit{Case 1:} $H_1 = H$, $W_1 = (W_1 \cup W_2)\setminus H$, and $\mathrm{gcd}(f_1,f_2) \prec f_1$. Then $H_2 \subseteq H_1$, $W_2 \setminus H_1 \subseteq W_1$, and $f_2 \prec \mathrm{lcm}(f_1,f_2)$. Given that $V_0 \subseteq J = \varnothing$ and $W_1 \cap H_1 = \varnothing$, it follows that
 \[
 (W_1 \cap H_2) \cup (W_2 \cap H_1) \cup ((W_1 \cap W_2)\setminus V_0) = \varnothing \cup (W_2 \cap H_1) \cup (W_1 \cap W_2) = W_2.
 \] 
Therefore, by \cref{join-meet} and \cref{min-triple}, 
 \[
 (H_1,W_1,f_1) \land (H_2,W_2,f_2) = (H_2,W_2,\mathrm{lcm}(f_1,f_2)) \prec (H_2,W_2,f_2).
 \]

\textit{Case 2:} $H_1 = H$, $|((W_1 \cup W_2)\setminus H) \setminus W_1|=1$, and $f_1 = \mathrm{gcd}(f_1,f_2)$. Then $H_2 \subseteq H_1$, $f_2 = \mathrm{lcm}(f_1,f_2)$, and
 \begin{eqnarray*}
 |W_2 \setminus ((W_1 \cap H_2) \cup (W_2 \cap H_1) \cup (W_1 \cap W_2))| = |W_2 \setminus (H_1 \cup W_1)|\\ = |(W_1 \cup W_2)\setminus (H_1 \cup H_2 \cup W_1)| = |((W_1 \cup W_2)\setminus H) \setminus W_1|=1.
 \end{eqnarray*}
Therefore, again using the fact that $V_0 = \varnothing$, by \cref{join-meet} and \cref{min-triple}, 
 \[
 (H_1,W_1,f_1) \land (H_2,W_2,f_2) = (H_2,(W_1 \cap H_2) \cup (W_2 \cap H_1) \cup (W_1 \cap W_2),f_2) \prec (H_2,W_2,f_2).
 \]

\textit{Case 3:} $H_1 \subsetneq H$, $W_1 \setminus H = (W_1 \cup W_2)\setminus H$, 
 \[
 W_1 \cap H = \{v \in H\setminus H_1 \mid |\s^{-1}_{E\setminus H_1}(v)|=1\},
 \] 
$f_1(c) = \mathrm{gcd}(f_1,f_2)(c)$ for all $c \in C(W_1)$, and for each hereditary set $H_1 \subsetneq H' \subsetneq H$ there exists $v \in W_1\setminus H'$ such that $\rr(e) \in H'$ for all $e \in \s^{-1}(v)$. Then $W_1 \setminus H = (W_1 \cup W_2)\setminus H$ implies that $W_2 \subseteq H_1 \cup W_1$. Moreover, $f_2(c) = \mathrm{lcm}(f_1,f_2)(c)$ for all $c \in C(W_1)$, from which it follows that $f_2(c) = \mathrm{lcm}(f_1,f_2)(c)$ for all $c \in C(W_1 \cup W_2)$, since $f_1(c) = 1$ for all $c\in C(H_1)$. Notice also that given a hereditary set $H_1 \subsetneq H' \subsetneq H$ and $v \in W_1\setminus H'$ such that $\rr(e) \in H'$ for all $e \in \s^{-1}(v)$, it must be the case that $v \in  W_1 \cap H_2$, since otherwise $v \in V_0 \subseteq J$. Therefore, by \cref{join-meet}, 
 \[
 (H_1,W_1,f_1) \land (H_2,W_2,f_2) = (H_1 \cap H_2, (W_1 \cap H_2) \cup W_2, \mathrm{lcm}(f_1,f_2)).
 \]

Now, since $H_1 \subsetneq H_1 \cup H_2$, we have $H_1 \cap H_2 \subsetneq H_2$. Also $((W_1 \cap H_2) \cup W_2)\setminus H_2 = W_2$, and
 \begin{eqnarray*}
 ((W_1 \cap H_2) \cup W_2) \cap H_2 = W_1 \cap H_2 = W_1 \cap H = \{v \in H_2\setminus H_1 \mid |\s^{-1}_{E\setminus H_1}(v)|=1\}\\  = \{v \in H_2\setminus (H_1\cap H_2) \mid |\s^{-1}_{E\setminus (H_1\cap H_2)}(v)|=1\},
 \end{eqnarray*}
since $\rr(\s^{-1}(v)) \subseteq H_2$ for any $v \in H_2$. Thus, by \cref{min-triple}, to conclude that 
 \[
 (H_1,W_1,f_1) \land (H_2,W_2,f_2) \prec (H_2,W_2,f_2)
 \] 
it suffices to check that for each hereditary set $H_1 \cap H_2 \subsetneq H' \subsetneq H_2$ there exists $v \in ((W_1 \cap H_2) \cup W_2)\setminus H'$ such that $\rr(e) \in H'$ for all $e \in \s^{-1}(v)$. Given such a hereditary set $H'$, the set $H_1 \cup H'$ is also hereditary, and $H_1 \subsetneq H_1 \cup H' \subsetneq H$. Hence, as noted above, there exists $v \in (W_1 \cap H_2)\setminus (H_1 \cup H')$ such that $\rr(e) \in H_1 \cup H'$ for all $e \in \s^{-1}(v)$. That is, $v \in (W_1 \cap H_2)\setminus H'$. Since $v \in H_2$, we see that 
 \[
 \rr(e) \in (H_1 \cup H') \cap H_2 = (H_1 \cap H_2) \cup (H' \cap H_2) = (H_1 \cap H_2) \cup H' = H'
 \]
for all $e \in \s^{-1}(v)$, as desired.
\end{proof}

\begin{lemma} \label{containment}
Let $E$ be a graph, and suppose that $(H_1,W_1,f_1)$ and $(H_2,W_2,f_2)$ are Wang triples on $E$, such that $H_2 \subseteq H_1$, $J \neq \varnothing$, and
\[
(H_1,W_1,f_1) \prec (H_1,W_1,f_1) \lor (H_2,W_2,f_2).
\] 
Then $V_0 = W_2 \setminus (H_1 \cup W_1)$ and $|V_0|=1$. (See \cref{join-meet} for the definitions of $J$ and $V_0$.)
\end{lemma}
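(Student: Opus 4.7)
The plan is to establish the three parts of the conclusion in turn, leveraging the characterisation of covers of Wang triples in~\cref{min-triple}.

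First, I would simplify using $H_1 \cup H_2 = H_1$: then $V_0 = \{v \in (W_1 \cup W_2) \setminus H_1 \mid \s_{E\setminus H_1}^{-1}(v) = \varnothing\}$, and the defining property of a Wang triple gives $|\s_{E\setminus H_1}^{-1}(v)| = 1$ for every $v \in W_1$, hence $V_0 \cap W_1 = \varnothing$. Combined with $V_0 \subseteq W_1 \cup W_2$, this yields the easy inclusion $V_0 \subseteq W_2 \setminus (H_1 \cup W_1)$.

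Second, since $J \neq \varnothing$, the join's hereditary component $H_1 \cup J$ strictly contains $H_1$, so the covering must fall under case (iii) of~\cref{min-triple}. From that case, the equality $W_1 \setminus (H_1 \cup J) = (W_1 \cup W_2) \setminus (H_1 \cup J)$ forces $W_2 \setminus W_1 \subseteq H_1 \cup J$, and $J$ is downward directed. Now fix $v \in W_2 \setminus (H_1 \cup W_1)$; then $v \in J$. Because $v \in W_2$ and $H_2 \subseteq H_1$, we have $|\s_{E\setminus H_1}^{-1}(v)| \leq |\s_{E\setminus H_2}^{-1}(v)| = 1$. If $v \notin V_0$ this inequality is an equality, and then $(H_1, W_1 \cup \{v\}, f_1)$ is a Wang triple strictly between $(H_1, W_1, f_1)$ and $(H_1, W_1, f_1) \vee (H_2, W_2, f_2)$, contradicting the covering assumption. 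Hence $v \in V_0$.

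Third, for $|V_0|=1$: $V_0$ is nonempty because every element of $J$ has a path ending in $V_0$, and $V_0 \subseteq J$ (so distinct elements of $V_0$ automatically lie in $J$). Suppose $u, v \in V_0$ are distinct. The downward directedness of $J$ yields $w \in J$ with $u \geq w$ and $v \geq w$. Since every vertex of $V_0$ has all outgoing edges terminating in the hereditary set $H_1$, any proper descendant of such a vertex lies in $H_1$. Thus each of $u \geq w$ and $v \geq w$ forces either $w$ to coincide with the larger element or $w \in H_1$. Since $w \in J \subseteq E^0 \setminus H_1$, we are pushed into $w = u = v$, contradicting $u \neq v$.

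The main obstacle is the contradiction construction in the second step. Several intermediate Wang triples one might naively try — for example, extending $H_1$ by a vertex of $V_0$ while keeping $W_1$ — create awkward bookkeeping in the $W$-component, because vertices of $W_1$ whose unique outgoing edge lands in the new hereditary set cease to satisfy the Wang condition. The cleanest choice is instead to enlarge only $W_1$ by the single vertex $v$, at the price of checking that $f_1$ remains a valid cycle function for the enlarged triple; this works because any new cycle must pass through $v \notin H_1 \cup W_1$ and therefore already carries $f_1$-value $\infty$, making invariance under cyclic permutation automatic.
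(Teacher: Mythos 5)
Your proof is correct, and its first half coincides with the paper's: you show $V_0\cap W_1=\varnothing$ (hence $V_0\subseteq W_2\setminus(H_1\cup W_1)$), and for the reverse inclusion you use exactly the paper's intermediate triple $(H_1,W_1\cup\{v\},f_1)$, with the same justification that $f_1$ is still a legitimate cycle function because any cycle through $v\notin H_1\cup W_1$ already has $f_1$-value $\infty$. Where you genuinely diverge is the proof that $|V_0|=1$. The paper fixes $v\in V_0$, builds the set $J_v$ of vertices of $J$ admitting a path to $v$ through $W_1\cup W_2$, checks that $(H_1\cup J_v,W_1\setminus J_v,f)$ is a Wang triple strictly above $(H_1,W_1,f_1)$ and below the join, and concludes from the covering hypothesis that $J=J_v$, which forces $v$ to be the only element of $V_0$. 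You instead observe that, since the covering must fall under case (iii) of \cref{min-triple}, the ``Moreover'' clause of that proposition makes $J=(H_1\cup J)\setminus H_1$ downward directed; two distinct vertices of $V_0$ cannot share a lower bound in $J$, because every proper descendant of a $V_0$-vertex lies in the hereditary set $H_1$, while $J\cap H_1=\varnothing$. (Note that the containment $V_0\subseteq J$ you use here does not even require the convention on length-zero paths: your second step already places every element of $W_2\setminus(H_1\cup W_1)$ in $J$, and $V_0$ is contained in that set.) Both routes are sound: yours is shorter at this point because it delegates the auxiliary construction to the downward-directedness statement, whose proof inside \cref{min-triple} uses a comparable iterated hereditary-set construction, whereas the paper's argument is self-contained within the lemma and never invokes the ``Moreover'' clause.
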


\begin{proof}
By \cref{join-meet}, 
 \[
 (H_1,W_1,f_1) \lor (H_2,W_2,f_2) = (H, (W_1 \cup W_2)\setminus H, \mathrm{gcd}(f_1,f_2)),
 \] 
where $H =  H_1 \cup H_2 \cup J$. Since 
 \[
 (H_1,W_1,f_1) \prec (H_1,W_1,f_1) \lor (H_2,W_2,f_2),
 \] 
by \cref{min-triple}, there are three possible cases. However, the hypothesis that $J \neq \varnothing$ implies that $H_1 \neq H$, which rules out two of those cases. Thus $H_1 \subsetneq H$, $W_1 \setminus H = (W_1 \cup W_2)\setminus H$, and $f_1(c) = \mathrm{gcd}(f_1,f_2)(c)$ for all $c \in C(W_1)$, among other conditions. Since $H_2 \subseteq H_1$ and $W_1 \cap H_1 = \varnothing$, we have $$W_2 \setminus (H_1\cup J) = W_2 \setminus H \subseteq W_1 \setminus H = W_1 \setminus J,$$ which implies that $W_2 \setminus (H_1 \cup W_1) \subseteq J$. We begin by showing that $V_0 = W_2 \setminus (H_1 \cup W_1)$.

Note that since $H_2 \subseteq H_1$, for all $v \in W_1$ we have $1 = |\s^{-1}_{E\setminus H_1}(v)| = |\s^{-1}_{E\setminus (H_1\cup H_2)}(v)|$, and so $v \notin V_0$. Thus $V_0 \cap W_1 = \varnothing$. Hence $V_0 \subseteq W_2 \setminus (H_1 \cup H_2) = W_2 \setminus H_1$, and therefore $V_0 \subseteq W_2 \setminus (H_1 \cup W_1)$.

Now, suppose that $v \in (W_2 \setminus (H_1 \cup W_1)) \setminus V_0$. Then, in particular $v \in J \setminus V_0$, and so $|\s^{-1}_{E\setminus H_1}(v)| = |\s^{-1}_{E\setminus H_2}(v)| = 1$, but $v \notin W_1$. Therefore,
 \begin{eqnarray*}
  (H_1,W_1,f_1) \subsetneq (H_1,W_1 \cup \{v\},f_1) \subsetneq (H_1 \cup J,(W_1 \cup W_2)\setminus H,\mathrm{gcd}(f_1,f_2))\\
  = (H_1,W_1,f_1) \lor (H_2,W_2,f_2),
 \end{eqnarray*}
contrary to hypothesis. Thus $W_2 \setminus (H_1 \cup W_1) \subseteq V_0$, and so $V_0 = W_2 \setminus (H_1 \cup W_1)$.

It remains to show that $|V_0|=1$. Since $J \neq \varnothing$, there exists $v \in V_0 = W_2 \setminus (H_1 \cup W_1)$. Let
 \begin{eqnarray*}
 J_v = \{u \in J \mid \exists e_1\cdots e_n \in \Path(E) \ \forall i\in \{2, \dots, n\} \ \\ (\s(e_1) = u, \, \rr(e_n) = v, \, \s(e_i) \in W_1 \cup W_2)\}.
 \end{eqnarray*}
Then it is easy to see that $H_1 \cup J_v$ is a hereditary set, and that $|\s^{-1}_{E\setminus (H_1 \cup J_v)}(w)| = 1$ for all $w \in W_1 \setminus J_v$. Thus $(H_1 \cup J_v,W_1 \setminus J_v,f)$ is a well-defined Wang triple, where $f(c) = f_1(c)$ for all $c\in W_1 \setminus J_v$, $f(c) = 1$ for all $c \in C(H_1 \cup J_v)$, and $f(c) = \infty$ for all $c \notin C(H_1 \cup J_v \cup W_1)$. Now, since $v \in J_v$, and hence $J_v \neq \varnothing$, we have
 \[
 (H_1,W_1,f_1) \subsetneq (H_1 \cup J_v,W_1 \setminus J_v, f) \subseteq (H, (W_1 \cup W_2)\setminus H, \mathrm{gcd}(f_1,f_2)),
 \] 
which implies that $H_1 \cup J_v = H = H_1 \cup J$, and so $J = J_v$ (since $J\cap H_1 = \varnothing$). It follows from the definition of $J$ that $v$ is the unique element of $V_0$, and hence $|V_0|=1$.
\end{proof}

\begin{lemma} \label{no-containment}
Let $E$ be a graph, and suppose that $(H_1,W_1,f_1)$ and $(H_2,W_2,f_2)$ are Wang triples on $E$, such that $H_2 \not\subseteq H_1$, $J\neq \varnothing$ (see \cref{join-meet}), and 
 \[
 (H_1,W_1,f_1) \prec (H_1,W_1,f_1) \lor (H_2,W_2,f_2).
 \] 
Then $J \subseteq W_1$.
\end{lemma}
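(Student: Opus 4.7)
The plan is to argue by contradiction: assume $v_0\in J\setminus W_1$, and derive a contradiction using case~(iii) of \cref{min-triple}. That case applies because $H_2\not\subseteq H_1$ forces $H_1\subsetneq H_1\cup H_2\subseteq H$, where $H:=H_1\cup H_2\cup J$ is the first component of $(H_1,W_1,f_1)\vee(H_2,W_2,f_2)$ by \cref{join-meet}. I will use three consequences of case~(iii): the identity $W_1\cap H=\{v\in H\setminus H_1\mid |\s^{-1}_{E\setminus H_1}(v)|=1\}$, the identity $W_1\setminus H=(W_1\cup W_2)\setminus H$ (the $W$-component of the join), and the ``bottleneck'' condition that for every hereditary $H_1\subsetneq H'\subsetneq H$ there is some $v\in W_1\setminus H'$ with $\rr(\s^{-1}(v))\subseteq H'$.

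First I would extract structural information about $v_0$. The characterisation of $W_1\cap H$, combined with $v_0\in J\subseteq H\setminus H_1$ and $v_0\notin W_1$, gives $|\s^{-1}_{E\setminus H_1}(v_0)|\ne 1$. Since $v_0\in(W_1\cup W_2)\setminus W_1$ and $v_0\notin H_1\cup H_2$ we have $v_0\in W_2$, so there is a unique $e\in\s^{-1}(v_0)$ with $\rr(e)\notin H_2$; the $J$-witness path of $v_0$, together with heredity of $H_1$ and $H_2$ (applied via the terminal vertex, which lies in $V_0\subseteq E^0\setminus(H_1\cup H_2)$), shows $\rr(e)\notin H_1\cup H_2$. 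Hence $|\s^{-1}_{E\setminus H_1}(v_0)|\ge 1$, and so $\ge 2$; any other edge $e'\in\s^{-1}(v_0)\setminus\{e\}$ with $\rr(e')\notin H_1$ must then satisfy $\rr(e')\in H_2\setminus H_1$. I next set $H^*:=H\setminus\{u\in E^0\mid u\ge v_0\}$ and verify, via standard heredity and transitivity considerations, that $H^*$ is hereditary, contains $H_1\cup H_2$ (no element of $H_1\cup H_2$ can be $\ge v_0$, by heredity and $v_0\notin H_1\cup H_2$), strictly contains $H_1$ (via $\rr(e')$), and is strictly contained in $H$ (via $v_0$).

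The bottleneck condition then yields $v^*\in W_1\setminus H^*$ with $\rr(\s^{-1}(v^*))\subseteq H^*\subseteq H$. The step I expect to be the main obstacle is establishing that $v^*\in H$, and the key to it is the second identity of case~(iii): if instead $v^*\notin H$, then $v^*\in W_1\setminus H=(W_1\cup W_2)\setminus H$, which is the $W$-component of the join triple and therefore must satisfy $|\s^{-1}_{E\setminus H}(v^*)|=1$, whereas $\rr(\s^{-1}(v^*))\subseteq H$ forces $\s^{-1}_{E\setminus H}(v^*)=\varnothing$, a contradiction. Once $v^*\in H$ is secured, $v^*\in H\setminus H^*$ gives $v^*\ge v_0$, and $v^*\in W_1$ together with $v_0\notin W_1$ force $v^*>v_0$. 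Taking the first edge $g$ of a path from $v^*$ to $v_0$ produces $\rr(g)\ge v_0$, and hence $\rr(g)\notin H^*$, contradicting $\rr(\s^{-1}(v^*))\subseteq H^*$ and completing the argument.
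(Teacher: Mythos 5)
Your proof is correct, but it takes a genuinely different route from the paper's. Given $v_0\in J\setminus W_1$, the paper builds an explicit intermediate congruence: it forms the set $J_{v_0}$ of vertices of $J$ that flow into $v_0$ through $W_1\cup W_2$, sets $K=H_1\cup H_2\cup(J\setminus J_{v_0})$, checks by hand that $K$ is hereditary and that $|\s^{-1}_{E\setminus K}(w)|=1$ for every $w\in W_1\setminus K$, and thus exhibits a Wang triple $(K,W_1\setminus K,f)$ strictly between $(H_1,W_1,f_1)$ and the join, contradicting the cover directly. You instead use \cref{min-triple}(iii) as a black box (cases (i) and (ii) being excluded since $H_1\subsetneq H_1\cup H_2\subseteq H$), and contradict its bottleneck condition with the different auxiliary set $H^*=H\setminus\{u\in E^0\mid u\geq v_0\}$, the complement in $H$ of the up-set of $v_0$. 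The cost of your route is the extra step of locating the bottleneck vertex $v^*$ inside $H$, which you handle correctly via the Wang-triple requirement $|\s^{-1}_{E\setminus H}(v^*)|=1$ on the $W$-component of the join, plus the closing path argument from $v^*$ down to $v_0$; the gain is that you never need to verify that an intermediate triple is a Wang triple, since \cref{min-triple} already packages that construction. The supporting details all check out: $H$ is hereditary as the first component of the join, hence $H^*$ is hereditary and contains $H_1\cup H_2$; the first edge of the $J$-witness path for $v_0$ is the unique edge out of $v_0$ avoiding $H_2$, giving $|\s^{-1}_{E\setminus H_1}(v_0)|\geq 2$ and an edge $e'$ with $\rr(e')\in H_2\setminus H_1$, so $H_1\subsetneq H^*\subsetneq H$; and $v^*\geq v_0$ with $v^*\neq v_0$ yields the contradiction with $\rr(\s^{-1}(v^*))\subseteq H^*$.
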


\begin{proof}
By \cref{join-meet}, 
 \[
 (H_1,W_1,f_1) \lor (H_2,W_2,f_2) = (H_1 \cup H_2 \cup J, (W_1 \cup W_2)\setminus (H_1 \cup H_2 \cup J), \mathrm{gcd}(f_1,f_2)).
 \]
Now suppose that $v \in J \setminus W_1$, let
\begin{eqnarray*}
 J_v = \{u \in J \mid \exists e_1\cdots e_n \in \Path(E) \ \forall i\in \{2, \dots, n\} \ \\ (\s(e_1) = u, \, \rr(e_n) = v, \, \s(e_i) \in W_1 \cup W_2)\},
\end{eqnarray*}
 and let $H = H_1 \cup H_2 \cup (J \setminus J_v)$. We note that for all $w \in J \setminus J_v$, either $\rr(e) \in H_1 \cup H_2$ for all $e \in \s^{-1}(w)$, or there is a unique $e \in \s^{-1}(w)$ such that $\rr(e) \notin H_1\cup H_2$, in which case $\rr(e) \in J \setminus J_v$. It follows that $H$ is a hereditary set.

Next, suppose that $w \in W_1 \setminus H$. We claim that $|\s^{-1}_{E\setminus H}(w)|=1$. If $w \notin J_v$, then $w \in W_1 \setminus (H_1 \cup H_2 \cup J)$, and so $|\s^{-1}_{E\setminus (H_1 \cup H_2 \cup J)}(w)|=1$, by~\cite[Lemma 2.8]{Luo2021aa} (\cref{join-meet}). Since $|\s^{-1}_{E\setminus H}(w)| \leq 1$ for all $w \in W_1 \cup W_2$, in this case it follows that $|\s^{-1}_{E\setminus H}(w)| = 1$. Therefore we may suppose that $w \in J_v$. Then $w \neq v$, since $w \in W_1$ and $v \in J \setminus W_1$, from which it follows that $w \notin V_0$ (since $V_0 \cap J_v \subseteq \{v\}$). Hence there is a unique $e \in \s^{-1}(w)$ such that $\rr(e) \in J_v$, and so once again $|\s^{-1}_{E\setminus H}(w)| = 1$.

Since $H$ is hereditary and $|\s^{-1}_{E\setminus H}(w)|=1$ for all $w \in W_1 \setminus H$, we conclude that $(H, W_1 \setminus H, f)$ is a well-defined Wang triple, where $f(c) = f_1(c)$ for all $c\in W_1 \setminus H$, $f(c) = 1$ for all $c \in C(H)$, and $f(c) = \infty$ for all $c \notin C(H \cup W_1)$. Since $H_2 \not\subseteq H_1$ and $J_v \neq \varnothing$, it follows that
 \[
 (H_1,W_1,f_1) \subsetneq (H, W_1 \setminus H, f)\subsetneq (H_1 \cup H_2 \cup J, (W_1 \cup W_2)\setminus (H_1 \cup H_2 \cup J), \mathrm{gcd}(f_1,f_2)),
 \] 
contradicting the hypothesis that 
 \[
 (H_1,W_1,f_1) \prec (H_1,W_1,f_1) \lor (H_2,W_2,f_2).
 \] 
Therefore $J \setminus W_1 = \varnothing$.
\end{proof}

\begin{proof}[Proof of \cref{thm-semimod}]
($\Longleftarrow$): It follows immediately from \cref{bifurcation} that if $L(G(E))$ is lower-semimodular, then $E$ has no forked vertices.

\medskip

\noindent ($\Longrightarrow$): Suppose that $E$ has no forked vertices. It suffices to take Wang triples $(H_1,W_1,f_1)$ and $(H_2,W_2,f_2)$ such that
 \[
 (H_1,W_1,f_1) \prec (H_1,W_1,f_1) \lor (H_2,W_2,f_2),
 \] and show that
 \[
 (H_1,W_1,f_1) \land (H_2,W_2,f_2) \prec (H_2,W_2,f_2).
 \]

By \cref{J-empty}, we may assume that $J \neq \varnothing$. By \cref{join-meet}, 
 \[
 (H_1,W_1,f_1) \lor (H_2,W_2,f_2) = (H, (W_1 \cup W_2)\setminus H, \mathrm{gcd}(f_1,f_2)),
 \] 
where $H =  H_1 \cup H_2 \cup J$. Since $J \neq \varnothing$, it cannot be the case that $H_1 = H$. Hence, by \cref{min-triple}, the following conditions hold:
 \begin{enumerate}[(a)]
  \item $H_1 \subsetneq H$,
  \item $W_1 \setminus H = (W_1 \cup W_2)\setminus H$,
  \item $W_1 \cap H = \{v \in H \setminus H_1 \mid |\s^{-1}_{E\setminus H_1}(v)|=1\},$
  \item $f_1(c) = \mathrm{gcd}(f_1,f_2)(c)$ for all $c \in C(W_1)$,
  \item for each hereditary set $H_1 \subsetneq H' \subsetneq H$ there exists $v \in W_1\setminus H'$ such that $\rr(e) \in H'$ for all $e \in \s^{-1}(v)$.
 \end{enumerate}
Since $E$ has no forked vertices, by \cref{no-fork}, 
\[
(H_1,W_1,f_1) \land (H_2,W_2,f_2) = (H_1 \cap H_2, W, \mathrm{lcm}(f_1,f_2)),
\] 
where 
 \[
 W = (W_1 \cap H_2) \cup (W_2 \cap H_1) \cup (W_1 \cap W_2).
 \] 
Moreover, by (d), $\mathrm{lcm}(f_1,f_2)(c) = f_2(c)$ for all $c \in C(W_1)$. Since $f_1(c) = 1$ for all $c \in H_1$, it follows that $\mathrm{lcm}(f_1,f_2)(c) = f_2(c)$ for all $c \in C(W)$.

Let us next consider the case where $H_2 \subseteq H_1$, i.e., where $H_1 \cap H_2 = H_2$. Then, by \cref{containment}, $V_0 = W_2 \setminus (H_1 \cup W_1)$ and $|V_0|=1$. Thus, using the fact that $W_2 \cap H_2 = \varnothing$, gives
 \[
 W_2 \setminus W = W_2 \setminus (W_2 \cap (H_1 \cup W_1)) = W_2 \setminus (H_1 \cup W_1) = V_0,
 \] 
and so $|W_2 \setminus W| = 1$. Since no vertex in $V_0$ can belong to a cycle, and $H_2 \subseteq H_1$, it also follows that $\mathrm{lcm}(f_1,f_2)(c) = f_2(c)$ for all 
\[
c \in C(H_2 \cup (W_2\setminus W) \cup W) = C(H_2 \cup W_2),
\] 
and so $\mathrm{lcm}(f_1,f_2) = f_2$. Therefore, by \cref{min-triple}, 
 \[
 (H_1,W_1,f_1) \land (H_2,W_2,f_2)= (H_2,W,f_2) \prec (H_2,W_2,f_2),
 \] 
as desired.

Hence we may assume that $H_2 \not\subseteq H_1$, i.e., that $H_1 \cap H_2 \subsetneq H_2$. Then $J \subseteq W_1$, by \cref{no-containment}. Since, by (b), $W_2 \setminus H \subseteq W_1 \setminus H$, it follows that 
\[
W_2 \subseteq H_1 \cup W_1 \cup J = H_1 \cup W_1.
\] 
Therefore 
 \[
 W\setminus H_2 = (W_2 \cap (H_1 \cup W_1))\setminus H_2 = W_2 \cap (H_1 \cup W_1) = W_2.
 \]
Next, using (c) and $J \subseteq W_1$, gives 
 \[
 (W_1 \cap H_2) \cup J = W_1 \cap (H_2 \cup J) = W_1 \cap H = \{v \in (H_2 \setminus H_1) \cup J \mid |\s^{-1}_{E\setminus H_1}(v)|=1\},
 \] 
from which it follows that 
 \[
 W \cap H_2 = W_1 \cap H_2 = (W_1 \cap H) \setminus J = \{v \in H_2 \setminus (H_1 \cap H_2) \mid |\s^{-1}_{E\setminus (H_1 \cap H_2)}(v)|=1\},
 \] 
since $\rr(\s^{-1}(v)) \subseteq H_2$ for any $v \in H_2$. Therefore, recalling that 
 \[
 (H_1,W_1,f_1) \land (H_2,W_2,f_2) = (H_1 \cap H_2, W, \mathrm{lcm}(f_1,f_2))
 \] 
and that $\mathrm{lcm}(f_1,f_2)(c) = f_2(c)$ for all $c \in C(W)$, by \cref{min-triple}, to conclude that
 \[
 (H_1,W_1,f_1) \land (H_2,W_2,f_2) \prec (H_2,W_2,f_2),
 \] 
it suffices to show that for each hereditary set $H_1 \cap H_2 \subsetneq H' \subsetneq H_2$ there exists $v \in W\setminus H'$ such that $\rr(e) \in H'$ for all $e \in \s^{-1}(v)$.

Let $H_1 \cap H_2 \subsetneq H' \subsetneq H_2$ be a hereditary set. Then $H_1 \cup H'$ is a hereditary set, and 
\[
H_1 \subsetneq H_1 \cup H' \subsetneq H_1 \cup H_2 \subseteq H.
\] 
Hence, by (e), there exists $v \in W_1\setminus (H_1 \cup H') = W_1\setminus H'$ such that $\rr(e) \in H_1 \cup H'$ for all $e \in \s^{-1}(v)$. If $v \in H_2$ for some such $v$, then $v \in (W_1 \cap H_2)\setminus H' \subseteq W\setminus H'$, and $\rr(e) \in (H_1 \cup H') \cap H_2 = H'$ for all $e \in \s^{-1}(v)$, as required. Thus to conclude the proof, it is enough to show that it cannot be the case that $v \notin H_2$ for all $v \in W_1\setminus H'$ satisfying $\rr(\s^{-1}(v)) \subseteq H_1 \cup H'$.

Assume that $v \notin H_2$ for all $v \in W_1\setminus H'$ satisfying $\rr(\s^{-1}(v)) \subseteq H_1 \cup H'$. Then, in particular, $v \in V_0$ for all such vertices. Let 
\[
V' = \{u \in V_0 \mid \rr(\s^{-1}(u)) \subseteq H_1 \cup H'\},
\] 
and let
 \begin{eqnarray*}
 J' = \{u \in J \mid \exists e_1\cdots e_n \in \Path(E) \ \forall i\in \{2, \dots, n\} \ \\ (\s(e_1) = u, \, \rr(e_n) \in V', \, \s(e_i) \in W_1 \cup W_2)\}.
 \end{eqnarray*}
Then, using the fact that $J \subseteq W_1$, we see that $K = H_1 \cup H' \cup J'$ is hereditary. Also, $H_1 \subsetneq K \subsetneq H = H_1 \cup H_2 \cup J$, since $J \cap (H_1\cup H_2) = \varnothing$.

We claim that $|\s^{-1}_{E\setminus K}(u)|=1$ for all $u \in W_1 \setminus K$. Let  $u \in W_1 \setminus K$, and let $e \in \s^{-1}(u)$ be the unique edge such that $\rr(e) \notin H_1$. Since $W_1 \setminus K \subseteq W_1\setminus H'$, if $\rr(e) \in H' \, (\subseteq H_1 \cup H')$, then, by assumption, $u \notin H_2$, and so $u \in V'$, contradicting $u \notin J'$. Thus $\rr(e) \notin H'$. Moreover, $\rr(e) \notin J'$, since $u \notin J'$, and therefore $\rr(e) \notin K$. Since $|\s^{-1}_{E\setminus H_1}(u)|=1$, it follows that $|\s^{-1}_{E\setminus K}(u)|=1$. 
 
Therefore, defining $f(c) = f_1(c)$ for all $c\in W_1 \setminus K$, $f(c) = 1$ for all $c \in C(K)$, and $f(c) = \infty$ for all $c \notin C(K \cup W_1)$, we have
 \[
 (H_1,W_1,f_1) \subsetneq (K, W_1\setminus K,f)\subsetneq (H, (W_1 \cup W_2)\setminus H, \mathrm{gcd}(f_1,f_2)),
 \] 
contrary to hypothesis. (The existence of $K$ with the above properties also contradicts (e).) Hence it cannot be the case that $v \notin H_2$ for all $v \in W_1\setminus H'$ satisfying $\rr(\s^{-1}(v)) \subseteq H_1 \cup H'$, as required.
\end{proof}

We need one more lemma to prove \cref{thm-semimod-mod-dist}.

\begin{lemma}\label{lemma-union}
Let $E$ be a finite acyclic graph. If $(H_1, W_1, \varnothing)$ and $(H_2, W_2, \varnothing)$ are Wang triples on $E$ such that $H_1\cup W_1 = H_2 \cup W_2$, then $H_1 = H_2$ and $W_1 = W_2$.
\end{lemma}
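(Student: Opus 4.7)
The plan is to prove the contrapositive-style direct approach: assume $H_1 \cup W_1 = H_2 \cup W_2$ and deduce that $H_1 = H_2$, from which $W_1 = W_2$ follows automatically since $W_i \cap H_i = \varnothing$ by definition, and so $W_i = (H_i \cup W_i) \setminus H_i$. To get $H_1 = H_2$, I would argue by contradiction, assuming without loss of generality that $H_1 \setminus H_2 \neq \varnothing$, and produce a vertex that cannot lie in $H_2 \cup W_2$ but must lie in $H_1 \cup W_1$.

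The key tool is the fact that $E$ is finite and acyclic, so the preorder $\geq$ on $E^0$ is actually a (well-founded) partial order. This lets me choose $v \in H_1 \setminus H_2$ which is minimal with respect to $\geq$ among all elements of $H_1 \setminus H_2$. Now I would exploit the two constraints on $v$. First, because $H_1$ is hereditary and $v \in H_1$, every $u$ with $v \geq u$ lies in $H_1$; the minimality of $v$ then forces every such $u$ (other than $v$) to lie in $H_2$. In particular, $\rr(e) \in H_2$ for every $e \in \s^{-1}(v)$, which gives $|\s_{E\setminus H_2}^{-1}(v)| = 0$. Second, from the definition of a Wang triple, membership in $W_2$ requires $|\s_{E\setminus H_2}^{-1}(v)| = 1$, so $v \notin W_2$. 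Combined with $v \notin H_2$, this yields $v \notin H_2 \cup W_2 = H_1 \cup W_1$, contradicting $v \in H_1$.

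There is essentially no hard step here; the only subtlety to be a little careful about is the edge case where $v$ happens to be a sink (so $\s^{-1}(v) = \varnothing$ to begin with). In that case the hereditary-minimality discussion about children is vacuous, but the conclusion $|\s_{E\setminus H_2}^{-1}(v)| = 0 \neq 1$ still holds directly, and the same contradiction goes through. Once $H_1 \subseteq H_2$ is established, symmetry gives $H_2 \subseteq H_1$, hence $H_1 = H_2$, and the equality $W_1 = W_2$ is immediate as noted above.
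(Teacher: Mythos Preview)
Your proof is correct and takes essentially the same approach as the paper's: both exploit finiteness and acyclicity to locate a vertex in $H_1 \setminus H_2$ all of whose out-neighbours already lie in $H_2$, which therefore cannot belong to $W_2$. The only difference is that the paper reaches such a vertex by iteratively following the unique edge out of $H_2$ down to a sink, whereas you invoke well-foundedness directly by selecting a $\geq$-minimal element of $H_1 \setminus H_2$.
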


\begin{proof}
Seeking a contradiction, suppose that there exists $v\in H_1 \cap W_2$. Since $v\in W_2$, there exists $e_0\in \s^{-1}(v)$ such that $\rr(e_0) \not \in H_2$. Since $v\in H_1$ and $H_1$ is hereditary, $\rr(e_0)\in H_1$ also. Since $H_1\cup W_1 = H_2 \cup W_2$ and $\rr(e_0)\not\in H_2$, it follows that $\rr(e_0)\in W_2$. Repeating this construction gives $e_1 \in \s^{-1}(\rr(e_0))$ such that $\rr(e_1) \in H_1 \cap W_2$, and so on. Since $E$ is finite and acyclic, this process must yield a path $e_0e_1 \cdots e_n$ where $\rr(e_i)\in H_1 \cap W_2$ for all $i$, and where $\rr(e_n)$ is a sink. But then $|\s_{E\setminus H_2}^{-1}(\rr(e_n))| = 0$, contradicting  $\rr(e_n) \in W_2$. Therefore $H_1\cap W_2 = \varnothing$, and so $H_1\subseteq H_2$. By symmetry, $H_2 \cap W_1 = \varnothing$, and so $H_2 \subseteq H_1$, which implies that $H_1 = H_2$. Since $H_1 \cap W_1 = \varnothing = H_2 \cap W_2$, it follows that $W_1 = W_2$.
\end{proof}

\begin{proof}[Proof of \cref{thm-semimod-mod-dist}]
\noindent (i) $\Longrightarrow$ (ii): Suppose that $L(G(E))$ is lower-semimodular. By \cref{prop-upper-semi}, $L(G(E))$ is also upper-semimodular. Since $L(G(E))$ is finite, it follows that $L(G(E))$ is modular (see \cite[IV.2, Corollary 3]{Grtzer1978aa}).
 
\medskip

\noindent (ii) $\Longrightarrow$ (iii): Suppose that $L(G(E))$ is modular. Then the pentagon lattice $\mathfrak{N}_5$ (see \cref{fig-pentagon}) is not a sublattice of $L(G(E))$, as discussed above. Therefore to show that $L(G(E))$ is distributive it suffices to prove that the diamond lattice $\mathfrak{M_3}$ (also shown in \cref{fig-pentagon}) is not a sublattice of $L(G(E))$.

Seeking a contradiction, suppose that there exist distinct Wang triples $(H_1, W_1, \varnothing)$, $(H_2, W_2, \varnothing)$, and $(H_3, W_3, \varnothing)$ on $E$, such that the joins and meets of any two are equal. In this case, none of these three is contained in any of the others. We denote $(H_1, W_1, \varnothing) \vee (H_2, W_2, \varnothing)$ by $(H^{\vee}, W^{\vee}, \varnothing)$, and $(H_1, W_1, \varnothing)\wedge (H_2, W_2, \varnothing)$ by $(H^{\wedge}, W^{\wedge}, \varnothing)$.

By \cref{join-meet},
 \begin{equation}\label{eq-number-0}
 H^{\vee} \cup W^{\vee} = H_i \cup W_i \cup H_j \cup W_j = H_i\cup W_i \cup [(H_j \cup W_j) \setminus (H_i\cup W_i)]
 \end{equation}
for all distinct $i, j\in \{1, 2, 3\}$. Therefore
 \begin{equation}\label{eq-number-1}
  (H_j \cup W_j) \setminus (H_i \cup W_i) = (H_k \cup W_k) \setminus (H_i \cup W_i),
 \end{equation}
whenever $\{i, j,k\} = \{1, 2, 3\}$. Using \cref{join-meet} again,
 \[
 (H^{\wedge}, W^{\wedge}, \varnothing) = (H_1\cap H_2, (W_1\cap H_2) \cup (W_2\cap H_1) \cup ((W_1 \cap W_2) \setminus V_0), \varnothing).
 \]
Since $L(G(E))$ is modular, and therefore lower-semimodular, by \cref{thm-semimod}, $E$ has no forked vertices. Hence, by \cref{no-fork}, $W_1 \cap W_2 \cap V_0 = \varnothing$, and so
 \[
 H^{\wedge}\cup W^{\wedge} = (H_1\cap H_2)\cup (W_1\cap H_2) \cup (W_2\cap H_1) \cup(W_1 \cap W_2) = (H_1\cup W_1) \cap (H_2\cup W_2).
  \]
By symmetry,
 \begin{equation} \label{eq-just-awesome}
  H^{\wedge}\cup W^{\wedge} =  (H_i\cup W_i) \cap (H_j\cup W_j)
 \end{equation}
for all distinct $i, j\in \{1, 2, 3\}$.

If $\{i, j, k\} = \{1, 2, 3\}$, then, by \cref{eq-number-0},
 \begin{eqnarray*}
  H^{\vee} \cup W^{\vee} & = & [(H_i\cup W_i)\setminus (H_j\cup W_j)] \cup [(H_i\cup W_i)\cap (H_j\cup W_j)] \\ & &
  \cup [(H_j\cup W_j)\setminus (H_i\cup W_i)] \\ & = &
  [(H_i\cup W_i)\setminus (H_k\cup W_k)] \cup [(H_i\cup W_i)\cap (H_k\cup W_k)] \\ & &
  \cup [(H_k\cup W_k)\setminus (H_i\cup W_i)].
 \end{eqnarray*}
From \cref{eq-number-1} and \cref{eq-just-awesome} it follows that
 \[
 (H_i\cup W_i)\setminus (H_j\cup W_j) = (H_i\cup W_i)\setminus (H_k\cup W_k),
 \] 
and so $H_j \cup W_j = H_k \cup W_k$. Thus, by \cref{lemma-union}, $(H_j, W_j, \varnothing) = (H_k, W_k, \varnothing)$, producing a contradiction. Hence the diamond $\mathfrak{M_3}$ is not a sublattice of $L(G(E))$, as required.
 
\medskip

\noindent(iii) $\Longrightarrow$ (i): Suppose that $L(G(E))$ is distributive. Since, as mentioned above, every distributive lattice is modular, $L(G(E))$ is modular, and hence also lower-semimodular.
 
\medskip

Now, let us assume that $E$ is simple, and prove that (i) $\iff$ (iv).
 
\medskip

\noindent (i) $\iff$ (iv): It suffices, by \cref{thm-semimod}, to show that there exists a forked vertex in $E$ if and only if there are $e, f\in E^1$ such that $\s(e) = \s(f)$, $\rr(e)\not\geq \rr(f)$, and $\rr(f)\not\geq \rr(e)$.

For the forward direction, suppose that $v$ is a forked vertex in $E$. Then there exist distinct $e, f\in \s^{-1}(v)$ such that $\rr(g)\not \geq \rr(e)$ for all $g\in \s^{-1}(v)\setminus \{e\}$ and $\rr(g) \not \geq \rr(f)$ for all $g\in \s^{-1}(v)\setminus \{f\}$. In particular, $\s(e) = \s(f)$, $\rr(e) \not\geq \rr(f)$, and $\rr(f)\not\geq \rr(e)$.

For the converse, suppose that there exist $v\in E^0$ and $e, f\in \s^{-1}(v)$, such that $\rr(e)\not\geq \rr(f)$ and $\rr(f)\not\geq \rr(e)$. For convenience, we will refer to this situation as $v$ \textit{splitting at $e$ and $f$}. Since $E$ is finite and acyclic, we may assume that $v$ is $\leq$-minimal, among vertices that spilt (i.e., no $u \in E^0$ satisfying $v > u$ splits). Next, using the fact that $E$ is finite and has no parallel edges, among the $g \in \s^{-1}(v)$ satisfying $\rr(g) \geq \rr(e)$ we can find one, denoted $e'$, which is $\leq$-maximal. That is, $\rr(g) \not \geq \rr(e')$ for all $g\in \s^{-1}(v) \setminus \{e'\}$. Likewise, among the $g \in \s^{-1}(v)$ satisfying $\rr(g) \geq \rr(f)$ we can find one, denoted $f'$, such that $\rr(g) \not \geq \rr(f')$ for all $g\in \s^{-1}(v) \setminus \{f'\}$. To conclude that $v$ is forked it suffices to show that $e'\neq f'$.
 
Suppose that $e'=f'$. Then $\rr(e') \geq \rr(e)$ and $\rr(f') \geq \rr(f)$ imply that $e'\neq e$ and $e' \neq f$. Since $E$ has no parallel edges, there must exist $g,h \in E^1$ such that $\s(g) = \rr(e') = \s(h)$, $\rr(g) = \rr(e)$, and $\rr(h) = \rr(f)$. But then $u = \rr(e')=\rr(f')$ splits at $g$ and $h$, and satisfies $v > u$, contrary to the choice of $v$. Thus $e'\neq f'$, as desired.
\end{proof}

%%%%%%%%%%%%%%%%%%%%%%%%%%%%%%%%%%%%%%%%%%%%%%%%%%%%%%%%%%%%%%%%%%%%%%%%
\section{Atoms and atomistic congruence lattices}
%%%%%%%%%%%%%%%%%%%%%%%%%%%%%%%%%%%%%%%%%%%%%%%%%%%%%%%%%%%%%%%%%%%%%%%%

In this section, we describe the atoms in the congruence lattice of a graph inverse semigroup, and prove \cref{thm-atomistic} and \cref{cor-atomistic}.

For convenience, given appropriate $H, W\subseteq E^0$, by $(H,W, 1_H)$ or $(H,W, 1)$ we denote the Wang triple on $E$ with the trivial cycle function, relative to $H$. That is, $1_H: C(E^0) \to \mathbb{Z}^+ \cup \{\infty\}$ is defined by $1_H(c) = 1$ for all $c \in C(H)$, and $1_H(c) = \infty$ for all $c \in C(E^0 \setminus H)$.

\begin{prop} \label{atoms}
Let $E$ be a graph, and let $(H,W,f)$ be a Wang triple on $E$. Then $(H,W,f)$ is an atom in the lattice of Wang triples on $E$ if and only if one of the following holds:
 \begin{enumerate}[\rm (i)]
  \item $H = \varnothing$, $|W|=1$, and $f = 1_{\varnothing}$;
  \item $H$ is a strongly connected component of $E$, $W = \varnothing$, $|\s^{-1}(v)| \geq 2$ for each $v \in H$ that is not a sink, and $f = 1_H$.
 \end{enumerate}
\end{prop}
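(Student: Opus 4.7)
The plan is to apply \cref{min-triple} with the smaller triple $(H_1, W_1, f_1)$ set to the minimum element $(\varnothing, \varnothing, 1_\varnothing)$ of the lattice, since a Wang triple $(H, W, f)$ is an atom precisely when it covers this minimum. Walking through the three cases of \cref{min-triple} will recover the two possibilities listed in the statement, with the remaining case being vacuous.

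Case (i) of \cref{min-triple} is ruled out immediately, as it requires the existence of a cycle in $C(W_1) = C(\varnothing)$. Case (ii) forces $H = \varnothing$, $|W| = 1$, and $f = 1_\varnothing$; this is exactly condition (i) of the statement, with the implicit requirement $|\s^{-1}(v)| = 1$ coming from the definition of a Wang triple. The converse is then immediate: any Wang triple of the form $(\varnothing, \{v\}, 1_\varnothing)$ satisfies the hypotheses of Case (ii) of \cref{min-triple} relative to the minimum.

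Case (iii) forces $H_1 = \varnothing \subsetneq H$, $W = W_1 \setminus H_2 = \varnothing$, and therefore $f = 1_H$ (since the Wang triple axioms pin down $f(c) = 1$ on $C(H)$ and $f(c) = \infty$ on cycles outside $C(H \cup W) = C(H)$). The condition $W_1 \cap H_2 = \{v \in H \mid |\s^{-1}(v)| = 1\}$ with $W_1 = \varnothing$ translates into the requirement that every $v \in H$ is either a sink or satisfies $|\s^{-1}(v)| \geq 2$, matching the out-degree condition in part (ii) of the statement. The remaining condition of Case (iii)---that no hereditary set $H'$ satisfies $\varnothing \subsetneq H' \subsetneq H$---will be shown equivalent to $H$ being a strongly connected component of $E$. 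For the forward direction, given any $u \in H$, the set $\{x \in H \mid u \geq x\}$ is a nonempty hereditary subset of $H$ (using that $H$ itself is hereditary), so must equal $H$; this yields $u \geq v$ for all $u, v \in H$, and maximality as a strongly connected set is then automatic from hereditariness. Conversely, if $H$ is a hereditary strongly connected component, any proper nonempty hereditary $H' \subsetneq H$ is ruled out by taking $u \in H'$ and $v \in H \setminus H'$: strong connectivity gives $u \geq v$, and hereditariness of $H'$ then forces $v \in H'$, a contradiction. The backward direction is then routine, as any triple of form (ii) in the statement satisfies all the conditions of Case (iii) of \cref{min-triple} relative to the minimum.

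The main obstacle, though not conceptually deep, will be the equivalence between the lattice-theoretic covering condition (no proper nonempty hereditary subset of $H$) and the graph-theoretic description (strongly connected component); this relies in both directions on combining hereditariness of $H$ with the definition of $\geq$.
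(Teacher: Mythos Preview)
Your proposal is correct and follows essentially the same approach as the paper: both apply \cref{min-triple} with $(H_1,W_1,f_1)=(\varnothing,\varnothing,1_\varnothing)$ and walk through the three cases, with Case~(i) vacuous, Case~(ii) giving condition~(i), and Case~(iii) giving condition~(ii). Your treatment of the equivalence between ``no proper nonempty hereditary subset of $H$'' and ``$H$ is a strongly connected component'' is somewhat more explicit than the paper's (which compresses the forward direction into a single sentence and leaves the converse implicit in the ``if'' direction), but the underlying argument is the same.
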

\begin{proof}
It is easy to see that the least element in the lattice of Wang triples on $E$ is $(\varnothing, \varnothing, 1_{\varnothing})$.

If $(H,W,f)$ satisfies (i), then clearly $(\varnothing, \varnothing, 1_{\varnothing}) \prec (H,W,f)$, by \cref{min-triple}, and so $(H,W,f)$ is an atom. Now suppose that $(H,W,f)$ satisfies (ii). Then $\{v \in H \mid |\s^{-1}(v)|=1\} = \varnothing$, and there are no hereditary sets $H'$ satisfying $\varnothing \subsetneq H' \subsetneq H$. Thus $(\varnothing, \varnothing, 1_{\varnothing}) \prec (H,W,f)$, by \cref{min-triple}, once again.

Conversely, suppose that $(H,W,f)$ is an atom, i.e., $(\varnothing, \varnothing, 1_{\varnothing}) \prec (H,W,f)$. By \cref{min-triple}, there are three possible cases, which we examine individually.

\textit{Case 1:} $H = \varnothing$, $W = \varnothing$, and $f \prec 1_{\varnothing}$. The last clause cannot be satisfied by any $f$, and so this case is not actually possible.

\textit{Case 2:} $H = \varnothing$, $|W \setminus \varnothing | = 1$, and $f = 1_{\varnothing}$. This is condition (i) above.

\textit{Case 3:} $H \neq \varnothing$, $W = \varnothing$, $\{v \in H \mid |\s^{-1}(v)|=1\} = \varnothing,$ and there are no non-empty hereditary sets $H' \subsetneq H$. The last clause amounts to saying that $\{u \in E^0 \mid v\geq u\} = H$ for all $v \in H$, which implies that $H$ is strongly connected. Finally, given that $W = \varnothing$, it must be the case that $f(c) = 1_H$. Thus condition (ii) is satisfied.
\end{proof}

\begin{cor} \label{str-con}
Let $E$ be a graph, let $(H,W,f)$ be a Wang triple on $E$, and suppose that $(H,W,f)$ is the join of a (possibly infinite) collection of atoms in the lattice of Wang triples on $E$. Then one of the following conditions holds for each $v \in H$:
 \begin{enumerate}[\rm (i)]
  \item $|\s^{-1}(v)| = 0$;
  \item $|\s^{-1}(v)| = 1$, $v$ does not belong to a cycle, and $v > u$ for some $u \in E^0$ such that $|\s^{-1}(u)| \neq 1$;
  \item $|\s^{-1}(v)| \geq 2$, and $\rr(e) \geq v$ for all $e \in \s^{-1}(v)$.
 \end{enumerate}
Moreover, if $(H,W,f)$ is the join of finitely many atoms, then $H$ has finitely many strongly connected components and finitely many $v \in H$ such that $|\s^{-1}(v)| = 1$.
\end{cor}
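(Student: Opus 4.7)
The plan is to exploit the two-type classification of atoms from \cref{atoms}: type (I) atoms are $(\varnothing, \{v\}, 1_{\varnothing})$ for vertices $v$ with $|\s^{-1}(v)| = 1$, and type (II) atoms are $(K, \varnothing, 1_K)$ for hereditary strongly connected components $K$ in which every non-sink vertex $u \in K$ has $|\s^{-1}(u)| \geq 2$. I will write the given join as $(H,W,f) = \bigvee_{i \in I_1 \cup I_2} A_i$, partitioned by type, and let $H_0 = \bigcup_{j \in I_2} K_j$. The argument then proceeds by taking an arbitrary $v \in H$ and splitting into two cases according to whether $v \in H_0$.

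If $v \in K_j$ for some $j \in I_2$, then either $v$ is a sink and (i) holds, or $|\s^{-1}(v)| \geq 2$ by the atom condition on $K_j$, in which case heredity of $K_j$ gives $\rr(e) \in K_j$ for every $e \in \s^{-1}(v)$, and strong connectedness of $K_j$ then gives $\rr(e) \geq v$, yielding (iii). Otherwise $v \in H \setminus H_0$; the only atoms mentioning vertices outside $H_0$ are the type (I) atoms, so minimality of the join forces $v = v_k$ for some $k \in I_1$, and in particular $|\s^{-1}(v)| = 1$. The next step is to rule out $v$ lying on a cycle: such a cycle $c$ would satisfy $c \in C(H)$ by heredity, and the Wang triple condition would then force $f(c) = 1$; but $f = \gcd_i f_i$ forces some atomic $f_i(c) = 1$, which can only happen for type (II) atoms whose SCC contains every source of $c$, including $v$, contradicting $v \notin H_0$.

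To complete (ii), I will build an $\omega$-stage closure $H_0 \subseteq H_1 \subseteq H_2 \subseteq \cdots$ via $H_{n+1} = H_n \cup \{v_k : k \in I_1,\ \rr(e_{v_k}) \in H_n\}$, verify that $H' = \bigcup_{n < \omega} H_n$ is hereditary, and exhibit $W'$ and $f'$ so that $(H', W', f')$ is a Wang triple dominating every $A_i$. Minimality of the join then yields $H = H'$, so $v$ enters the closure at some finite stage $n$, and following unique outgoing edges produces a chain $v = v_0 \to v_1 \to \cdots \to u$ of length at most $n$ with $u \in H_0$. Since $v \notin H_0$ we have $v > u$ strictly, and the atom condition on the $K_j$ containing $u$ gives $|\s^{-1}(u)| \neq 1$, establishing (ii).

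For the finite-join refinement, assuming $|I| < \infty$ the set $H_0$ supplies $|I_2|$ strongly connected components, while the vertices of $H$ with $|\s^{-1}|=1$ are exactly the at most $|I_1|$ forced $v_k$'s by the case analysis above. The no-cycle step shows that each such $v_k$ is its own singleton SCC in $E$, so $H$ contains at most $|I_1|+|I_2|$ strongly connected components overall. The main obstacle will be justifying the $\omega$-stabilisation of the closure — the worry being a forced $v_k$ whose unique outgoing edge is a self-loop, which could \emph{a priori} be a fixed point of the recursion without grounding in $H_0$. This pathology is precisely what the no-cycle argument above excludes, since such a $v_k$ would then sit on a cycle entirely inside $H$.
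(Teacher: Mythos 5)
Your overall strategy---splitting the atoms by type via \cref{atoms}, forming $H_0=\bigcup_j K_j$, and replacing the paper's explicit evaluation of the join by an explicit upper bound $(H',W',f')$ built from a grounded closure, then invoking leastness of the join---is viable, and it is genuinely different from the paper, which instead computes the two homogeneous (possibly infinite) joins by a universal-property argument and then applies the binary formula of \cref{join-meet} to read off $H=J\cup\bigcup_i H_i$ with $J\subseteq\bigcup_i W_i$. However, as arranged your argument has a circular gap centred on the no-cycle step. The identity $f=\gcd_i f_i$ is not available: \cref{join-meet} concerns binary joins only, and for a possibly infinite join the relation $(H_i,W_i,f_i)\leq (H,W,f)$ yields only $f(c)\mid f_i(c)$ for every $i$, i.e.\ $f(c)\mid \gcd_i f_i(c)$; the reverse divisibility is precisely what is at issue. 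Indeed, here $\gcd_i f_i$ equals $1$ on $C(H_0)$ and $\infty$ elsewhere, so asserting $f=\gcd_i f_i$ amounts to asserting $C(H)\subseteq C(H_0)$, which is essentially the no-cycle claim you are trying to prove. You then lean the closure construction on this no-cycle step (the ``self-loop pathology'' is said to be excluded by it), while the natural way to certify $f=\gcd_i f_i$ would be an upper bound such as $(H',W',1_{H'})$, i.e.\ the closure itself; so the closure rests on no-cycle, no-cycle rests on $f=\gcd_i f_i$, and that identity rests on the closure.

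The circle can be broken, because the closure construction does not need the no-cycle claim at all. A $v_k$ whose unique outgoing edge is a self-loop (or which is never grounded) is simply never placed in $H'$; it may be placed in $W'$, since its unique edge then has range outside $H'$, and the cycle-function requirement for domination is harmless: every atom's cycle function is $\infty$ off $C(H_0)$, while $1_{H'}$ equals $1$ on $C(H_0)\subseteq C(H')$ and may be taken $\infty$ on $C(W')$. Hence $(H',W',1_{H'})$ is a Wang triple dominating every atom in all cases, and leastness of the join gives $H\subseteq H'$ (note it gives only containment, not your claimed $H=H'$, but containment is all you use). Every $v\in H\setminus H_0$ is then a grounded $v_k$, and groundedness alone rules out cycles: a cycle through $v$ must follow the unique outgoing edges along the grounding chain into the hereditary set $H_0$ and can never return to $v\notin H_0$. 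With the steps reordered in this way your proof goes through (and the finite-join refinement is fine); as submitted, however, the no-cycle step rests on an unjustified and question-begging identity.
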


\begin{proof}
Write
\begin{equation} \label{atom-join-1}
   (H,W,f) = \bigvee_{i \in I}(\varnothing,W_i,1) \vee  \bigvee_{i \in K}(H_i,\varnothing,1),
\end{equation}
where each $(\varnothing,W_i,1)$ is of the form described in (i) of \cref{atoms}, and each $(H_i,\varnothing,1)$ is of the form described in (ii) of \cref{atoms}. 
 
We claim that $\bigvee_{i \in I}(\varnothing,W_i,1) = (\varnothing,\bigcup_{i \in I}W_i,1)$. (Note that, since this is a potentially infinite join, \cref{join-meet} does not apply.) Clearly $(\varnothing,W_i,1) \subseteq (\varnothing,\bigcup_{i \in I}W_i,1)$ for each $i \in I$. Now suppose that each $(\varnothing,W_i,1) \subseteq (H',W',f')$ for some Wang triple $(H',W',f')$. Then $\bigcup_{i \in I}W_i \subseteq H' \cup W'$, and so $(\varnothing,\bigcup_{i \in I}W_i,1) \subseteq (H',W',f')$. It follows that $\bigvee_{i \in I}(\varnothing,W_i,1) = (\varnothing,\bigcup_{i \in I}W_i,1)$. An even simpler argument shows that $\bigvee_{i \in K}(H_i,\varnothing,1) = (\bigcup_{i \in K} H_i,\varnothing,1)$.
 
By the definition of Wang triples and \cref{atoms}, each $W_i$ consists of vertices $v$ satisfying $|\s^{-1}(v)| = 1$, while each $H_i$ consists of vertices $v$ satisfying $|\s^{-1}(v)| \neq 1$, and so $(\bigcup_{i \in I}W_i) \cap (\bigcup_{i \in K} H_i) = \varnothing$. Thus, by \cref{join-meet},
 \begin{equation} \label{atom-join-2}
   (H,W,f) = \Big(\varnothing,\bigcup_{i \in I}W_i,1\Big) \vee \Big(\bigcup_{i \in K} H_i,\varnothing,1\Big) = \Big(J \cup \bigcup_{i \in K} H_i,\Big(\bigcup_{i \in I}W_i\Big) \setminus J,1\Big),
 \end{equation}
where
 \[
   V_0 = \Big\{u \in \bigcup_{i \in I}W_i \ \Big| \ \s_{E\setminus (\bigcup_{i \in K} H_i)}^{-1}(u) = \varnothing\Big\}
 \] and
\begin{eqnarray*}
 J = \Big\{u \in \bigcup_{i \in I}W_i \ \Big| \ \exists e_1\cdots e_n \in \Path(E) \ \forall i\in \{2, \dots, n\}\\ \Big(\s(e_1) = u, \, \rr(e_n) \in V_0, \, \s(e_i) \in \bigcup_{i \in I}W_i\Big)\Big\}.
\end{eqnarray*}

Now, let $v \in H$, and suppose that $|\s^{-1}(v)| \neq 0$. If $|\s^{-1}(v)| \geq 2$, then $v \in \bigcup_{i \in K} H_i$, since $J \subseteq \bigcup_{i \in I}W_i$ and $H = J \cup \bigcup_{i \in K} H_i$. Thus $v \in H_i$ for some $i \in K$, where $H_i$ is hereditary and strongly connected, by \cref{atoms}. It follows that $\rr(e) \geq v$ for all $e \in \s^{-1}(v)$, and hence (iii) is satisfied.

Next, suppose that $|\s^{-1}(v)| = 1$. Then, by similar reasoning, $v \in J$, and so, in particular, $v$ does not belong to a cycle. Moreover $v > u$ for some $u \in H_i$ and $i \in K$. By \cref{atoms}, either $|\s^{-1}(u)| = 0$ or $|\s^{-1}(u)| \geq 2$, and so (ii) holds.
 
For the final statement, suppose that $(H,W,f)$ satisfies \cref{atom-join-1}, and therefore also \cref{atom-join-2}, with $I$ and $K$ finite. By construction, the strongly connected components in $H$ are precisely the $H_i$, and the $v \in H$ satisfying $|\s^{-1}(v)| = 1$ are elements of the singleton sets $W_i$. It follows that $H$ can contain only finitely many strongly connected components and $v \in H$ such that $|\s^{-1}(v)| = 1$.
\end{proof}

We can now give the proof of \cref{thm-atomistic}.

\begin{proof}[Proof of \cref{thm-atomistic}.]
($\Longrightarrow$): If every congruence on $G(E)$ is a join of atoms, then conditions (i), (ii), and (iii) hold for every $v \in E^0$, by \cref{str-con}, applied to $(E^0, \varnothing, 1)$. Moreover, if every congruence on $G(E)$ is the join of finitely many atoms (i.e., $L(G(E))$ is atomistic), then, again applying \cref{str-con} to $(E^0, \varnothing, 1)$, shows that $E^0$ has only finitely many strongly connected components and vertices $v$ such that $|s^{-1}(v)| = 1$.
  
\medskip

\noindent ($\Longleftarrow$): Suppose that each $v\in E^0$ satisfies one of the conditions (i), (ii), and (iii) in the statement of the theorem, and that $(H,W,f)$ is a Wang triple on $E$. Then $W$ consists entirely of vertices $v \in E^0$ such that $|\s^{-1}(v)| = 1$ and $v$ does not belong to a cycle. Thus $C(W) = \varnothing$, and so necessarily $f = 1_H = 1$. Moreover, $(\varnothing, W, 1)$ and $(\varnothing,\{v\},1)$ are well-defined Wang triples, for all $v \in W$. It is easy to see (as in the proof of \cref{str-con}) that $(\varnothing, W, 1) = \bigvee_{v \in W}(\varnothing,\{v\},1)$, which is a join of atoms, by \cref{atoms}.

Next, again by hypothesis, we can write 
   \begin{equation} \label{break-down-eq}
    H = U \cup \bigcup_{i\in I} H_i \cup \bigcup_{i\in K} \{w_i\},
   \end{equation}
where each $w_i$ is a sink, each $H_i$ is hereditary and strongly connected, with $|\s^{-1}(v)| \geq 2$ for all $v \in H$, and $U$ consists of vertices satisfying condition (ii). Then $\bigcup_{i\in I} H_i$ and $\bigcup_{i\in K} \{w_i\}$ are hereditary. Also, once again, it is easy to see that $(\varnothing, U, 1) = \bigvee_{v \in U} (\varnothing, \{v\}, 1)$, $(\bigcup_{i\in I} H_i, \varnothing, 1) = \bigvee_{i\in I}(H_i, \varnothing, 1)$, and $(\bigcup_{i\in K} \{w_i\}, \varnothing, 1) = \bigvee_{i\in K} (\{w_i\}, \varnothing, 1)$, where all the Wang triples involved are well-defined. Moreover, by condition (ii), for each $v \in U$, either $v > u$ for some $i \in I$ and $u \in H_i$, or $v > w_i$ for some $i \in K$. It follows, using \cref{join-meet}, that
  \begin{equation*}
    (H, \varnothing, 1) = (\varnothing, U, 1) \vee \Big(\bigcup_{i\in I} H_i, \varnothing, 1\Big) \vee \Big(\bigcup_{i\in K} \{w_i\}, \varnothing, 1\Big),
  \end{equation*}
and so  
  \begin{equation} \label{break-down-eq2} 
    (H, \varnothing, 1) = \bigvee_{v \in U} (\varnothing, \{v\}, 1) \vee \bigvee_{i\in I}(H_i, \varnothing, 1) \vee \bigvee_{i\in K} (\{w_i\}, \varnothing, 1), 
  \end{equation} 
which is a join of atoms, by \cref{atoms}. Noting that, by \cref{join-meet},
  \begin{equation} \label{break-down-eq3} 
    (H,W,f) = (H,W, 1) = (H, \varnothing, 1) \vee (\varnothing, W, 1) = (H, \varnothing, 1) \vee \bigvee_{v \in W}(\varnothing,\{v\},1),
  \end{equation} 
we conclude that $(H,W,f)$ is a join of atoms.
  
Finally, if $E^0$ has only finitely many strongly connected components and vertices $v$ such that $|s^{-1}(v)| = 1$, then $W$, as well as $U$, $I$, and $K$ in \cref{break-down-eq}, must be finite. Hence, $(H,W,f)$ is a finite join of atoms, by \cref{break-down-eq2} and \cref{break-down-eq3}.
\end{proof}

\begin{proof}[Proof of \cref{cor-atomistic}]
(i) $\Longrightarrow$ (ii): Suppose that $|\s^{-1}(v)| \leq 1$ for all $v\in E^0$. We define a mapping $\Psi: L(G(E)) \to \mathcal{P}(E^0)$ as follows:
 \[
  \Psi((H, W, \varnothing)) = H\cup W,
 \]
for every Wang triple $(H, W, \varnothing)$ on $E$. We will show that $\Psi$ is a lattice isomorphism.

By \cref{lemma-union}, $\Psi$ is injective. To show that is it surjective, let $V \in \mathcal{P}(E^0)$. By (i), we can write $V = U \cup W$, where $U$ consists of sinks and $W$ consists of vertices $v$ satisfying $|\s^{-1}(v)| = 1$ (with either set possibly empty). Then $(\{v\}, \varnothing, \varnothing)$ is a Wang triple for each $v \in U$, and $(\varnothing, \{v\}, \varnothing)$ is a Wang triple for each $v \in W$. By \cref{join-meet},
 \[
  \bigvee_{v \in U}(\{v\}, \varnothing, \varnothing)\vee \bigvee_{v \in W}(\varnothing, \{v\}, \varnothing) = (H', W', \varnothing)
 \]
for some sets $H'$ and $W'$ such that $V = H'\cup W'$, and so $\Psi((H', W', \varnothing)) = V$, from which it follows that $\Psi$ is surjective. It remains to show that $\Psi$ preserves meets and joins.

Let $(H_1, W_1, \varnothing)$ and $(H_2, W_2, \varnothing)$ be Wang triples on $E$, and let $J$ be the set from \cref{join-meet}. Then
 \begin{eqnarray*}
  \Psi((H_1, W_1, \varnothing) \vee (H_2, W_2, \varnothing))
  & = & \Psi((H_1\cup H_2\cup J, (W_1\cup W_2)\setminus (H_1\cup H_2\cup J), \varnothing)) \\
  & = & (H_1\cup H_2\cup J) \cup ((W_1\cup W_2)\setminus (H_1\cup H_2\cup J)) \\
  & = & (H_1\cup W_1) \cup (H_2\cup W_2) \\
  & = & \Psi((H_1, W_1, \varnothing)) \cup \Psi((H_2, W_2, \varnothing)).
 \end{eqnarray*}
Next, since $|\s ^ {-1}(v)|\leq 1$ for all $v\in E ^ 0$, the graph $E$ cannot have forked vertices. Thus, by \cref{no-fork},
 \[
  (H_1, W_1, \varnothing) \wedge (H_2, W_2, \varnothing) = (H_1\cap H_2, (W_1\cap H_2)\cup (W_2 \cap H_1) \cup (W_1\cap W_2), \varnothing),
\]
and so
 \begin{eqnarray*}
  \Psi((H_1, W_1, \varnothing) \wedge (H_2, W_2, \varnothing))
  & = &
  \Psi((H_1\cap H_2, (W_1\cap H_2)\cup (W_2 \cap H_1) \cup (W_1\cap W_2), \varnothing))\\
  & = & (H_1\cap H_2)\cup (W_1\cap H_2)\cup (W_2 \cap H_1) \cup (W_1\cap W_2) \\
  & = & (H_1\cup W_1) \cap (H_2\cup W_2) \\
  & = & \Psi((H_1, W_1,\varnothing)) \cap \Psi((H_2, W_2, \varnothing)).
 \end{eqnarray*}

\noindent (ii) $\Longrightarrow$ (iii):
The power set lattice $\mathcal{P}(E^0)$ is, by definition, atomistic.
 
\medskip

\noindent (iii) $\Longrightarrow$ (i):
Suppose that $L(G(E))$ is atomistic. Then one of the conditions (i), (ii), or (iii) in \cref{thm-atomistic} holds for every $v\in E^0$. Since $E$ is assumed to be acyclic, no $v \in E^0$ satisfies condition (iii) in that theorem. Hence $|\s^{-1} (v)| \leq 1$ for every $v\in E^0$.
\end{proof}

%%%%%%%%%%%%%%%%%%%%%%%%%%%%%%%%%%%%%%%%%%%%%%%%%%%%%%%%%%%%%%%%%%%%%%%%%%%%%%%
\section{Generating congruences}
%%%%%%%%%%%%%%%%%%%%%%%%%%%%%%%%%%%%%%%%%%%%%%%%%%%%%%%%%%%%%%%%%%%%%%%%%%%%%%%

The purpose of this section is to prove \cref{thm-generators}.

\begin{proof}[Proof of \cref{thm-generators}]
$(\Longleftarrow)$: Suppose that $\mathcal{A}$ contains all the congruences of types (i) and (ii) in the statement of the theorem, and let $(H, W, \varnothing)$ be any Wang triple on $E$. Since $E$ is finite and acyclic, we can write $H = \bigcup_{i=0}^n H_i$, for some $n\geq 0$, where for each $i$, the set $H_i$ consists of the vertices $h \in H$ such that there exists $p \in \Path(E)$ of length $i$, with $\s(p)=h$ and $\rr(p)$ a sink, and such that $i$ is maximal for this property of $h$. For each $i \geq 1$ and $h \in H_i$ choose $e_h \in  \s^{-1}(h)$ such that $\rr(e_h) \in H_{i-1}$, and let 
\[
G_h = \{v\in E^0 \mid \rr(f) \geq v \text{ for some } f \in \s^{-1}(h)\setminus\{e_h\}\}.
\] 
By the construction of $H_i$ and the hypothesis that $E$ is simple, here $\rr(f) \not \geq \rr(e_h)$ for all $f\in \s^{-1}(h) \setminus \{e_h\}$, and so, in particular, $\rr(e_h) \notin G_h$. Notice also that $(\{h\}, \varnothing, \varnothing)$ is a well-defined Wang triple for each $h \in H_0$, that $(G_{h}, \{h\}, \varnothing)$ is a well-defined Wang triple for each $h \in H_i$ with $i \geq 1$, and that both belong to $\mathcal{A}$. We will next show, by induction on $n$, that
\[
(H, \varnothing, \varnothing) = \bigvee_{h \in H_0}(\{h\}, \varnothing, \varnothing) \vee \bigvee_{i=1}^n \bigvee_{h \in H_i} (G_{h}, \{h\}, \varnothing).
\]

If $n=0$, and so $H = H_0$ consists of sinks, then it follows immediately from \cref{join-meet} that $(H, \varnothing, \varnothing) = \bigvee_{h \in H_0}(\{h\}, \varnothing, \varnothing)$. Supposing that $n \geq 1$, let us assume inductively that
\[
  \Big(\bigcup_{i=0}^{n-1} H_i, \varnothing, \varnothing\Big) = \bigvee_{h \in H_0}(\{h\}, \varnothing, \varnothing) \vee \bigvee_{i=1}^{n-1} \bigvee_{h \in H_i} (G_{h}, \{h\}, \varnothing).
\]
(It is easy to see that $\bigcup_{i=0}^{n-1} H_i$ is hereditary.) By construction, $\s^{-1}_{E\setminus (G_h \cup  H_{n-1})}(h) = \varnothing$ for each $h \in H_n$, and so, by \cref{join-meet},
  \[
    (G_{h}, \{h\}, \varnothing) \vee \Big(\bigcup_{i=0}^{n-1} H_i, \varnothing, \varnothing\Big) = \Big(G_{h} \cup \{h\} \cup \bigcup_{i=0}^{n-1} H_i, \varnothing, \varnothing\Big).
  \]
Since $H = \bigcup_{i=0}^n H_i$, iterating this computation gives
  \[
    \Big(\bigcup_{i=0}^{n-1} H_i, \varnothing, \varnothing\Big) \vee \bigvee_{h \in H_n} (G_{h}, \{h\}, \varnothing) = (H, \varnothing, \varnothing),
  \]
which proves the claim. In particular, if $W = \varnothing$, then $(H, W, \varnothing)$ is a join of congruences from $\mathcal{A}$.

Now suppose that $W \neq \varnothing$, and for each $w \in W$ define $K_w$ to be a minimal hereditary subset of $H$, such that $|\s_{E\setminus K_w}^{-1}(w)| = 1$. Then clearly $(K_w, \{w\}, \varnothing)$ is a Wang triple belonging to $\mathcal{A}$. Since $|\s^{-1}_{E\setminus H}(w)| = 1$, applying \cref{join-meet} once more, gives
\[
  (H, \varnothing, \varnothing) \vee (K_w, \{w\}, \varnothing) = (H, \{w\}, \varnothing)
\]
for each $w \in W$. It follows that
\begin{eqnarray*}
(H, W, \varnothing) & = & (H, \varnothing, \varnothing) \vee \bigvee_{w \in W} (K_w, \{w\}, \varnothing)\\ & = & \bigvee_{h \in H_0}(\{h\}, \varnothing, \varnothing) \vee \bigvee_{i=1}^n \bigvee_{h \in H_i} (G_{h}, \{h\}, \varnothing) \vee \bigvee_{w \in W} (K_w, \{w\}, \varnothing).
\end{eqnarray*}
Thus $\mathcal{A}$ generates $L(G(E))$.
 
\medskip

\noindent $(\Longrightarrow)$: It suffices to show that every congruence $\rho$ of type (i) or (ii) in the statement of the theorem is indecomposable, in the sense that if $\rho = \sigma \vee \tau$, then $\sigma = \rho$ or $\tau = \rho$. Any congruence of type (i) is an atom, by \cref{atoms}. As such, any congruence of this sort is not the join of two or more distinct congruences, and therefore the claim trivially holds for every congruence of type (i).

Now let $(H, \{v\}, \varnothing)$ be a congruence of type (ii). Clearly $(H, \varnothing, \varnothing) \subset (H, \{v\}, \varnothing)$. So to show that $(H, \{v\}, \varnothing)$ is indecomposable it suffices to prove that if $\tau$ is any other congruence such that $\tau \subseteq (H, \{v\}, \varnothing)$, then $\tau \subseteq (H, \varnothing, \varnothing)$ or $\tau = (H, \{v\}, \varnothing)$. If $\tau = (H', W', \varnothing)$ for some $H'$ and $W'$, then $H'\subseteq H$ and $W'\setminus H \subseteq \{v\}$. Hence either $W'\setminus H = \varnothing$ or $W'\setminus H = \{v\}$. In the first case, $W'\subseteq H$, and so $\tau = (H', W', \varnothing) \subseteq (H, \varnothing, \varnothing)$. In the second case, $W'\setminus H = \{v\}$, the condition that $|\s^{-1}_{E\setminus H'}(v)| = 1$ and the minimality of $H$ imply that $H' = H$. Then 
\[
W' = W'\setminus H' = W'\setminus H = \{v\},
\] 
giving $\tau = (H, \{v\}, \varnothing)$, as required.
\end{proof}

%%%%%%%%%%%%%%%%%%%%%%%%%%%%%%%%%%%%%%%%%%%%%%%%%%%%%%%%%%%%%%%%%%%%%%%%%%%%%%%
\subsection*{Acknowledgement}  
%%%%%%%%%%%%%%%%%%%%%%%%%%%%%%%%%%%%%%%%%%%%%%%%%%%%%%%%%%%%%%%%%%%%%%%%%%%%%%%

We are grateful to the referee for a very carful reading of the manuscript.

\printbibliography

\medskip

\noindent M.\ Anagnostopoulou-Merkouri, Mathematical Institute, North Haugh, St Andrews, Fife, KY16 9SS, Scotland

\noindent \emph{Email:} \href{mailto:mam49@st-andrews.ac.uk}{mam49@st-andrews.ac.uk}

\medskip

\noindent Z.\ Mesyan, Department of Mathematics, University of Colorado, Colorado Springs, CO, 80918, USA 

\noindent \emph{Email:} \href{mailto:zmesyan@uccs.edu}{zmesyan@uccs.edu}

\medskip

\noindent J.\ D.\ Mitchell, Mathematical Institute, North Haugh, St Andrews, Fife, KY16 9SS, Scotland

\noindent \emph{Email:} \href{mailto:jdm3@st-and.ac.uk}{jdm3@st-and.ac.uk}

\end{document}